\subjclass[2010]{11M41, 28A75,28A80,37C05,37C15,44A15,37C25}
\keywords{fractal zeta functions, complex dimensions, parabolic germs, formal classification, fractal analysis of orbits, tube functions, Minkowski dimension and content}
\def\@tocline#1#2#3#4#5#6#7{\relax
  \ifnum #1>\c@tocdepth 
  \else
    \par \addpenalty\@secpenalty\addvspace{#2}%
    \begingroup \hyphenpenalty\@M
    \@ifempty{#4}{%
      \@tempdima\csname r@tocindent\number#1\endcsname\relax
    }{%
      \@tempdima#4\relax
    }%
    \parindent\z@ \leftskip#3\relax \advance\leftskip\@tempdima\relax
    \rightskip\@pnumwidth plus4em \parfillskip-\@pnumwidth
    #5\leavevmode\hskip-\@tempdima
      \ifcase #1
       \or\or \hskip 1em \or \hskip 2em \else \hskip 3em \fi%
      #6\nobreak\relax
    \hfill\hbox to\@pnumwidth{\@tocpagenum{#7}}\par
    \nobreak
    \endgroup
  \fi}
\DeclarePairedDelimiter{\floor}{\lfloor}{\rfloor}
\let\text=\mbox
\newcommand{\z}{\zeta}
\newcommand{\ov}[1]{\overline{#1}}
\newcommand{\I}{\mathbbm{i}}
\newcommand{\iy}{\infty}
\newcommand{\udesno}[1]{\unskip\nobreak\hfil\penalty50\hskip1em\hbox{}
             \nobreak\hfil{#1\unskip\ignorespaces}
                 \parfillskip=\z@ \finalhyphendemerits=\z@\par
                 \parfillskip=0pt plus 1fil}
\newcommand{\eR}{\mathbb{R}}
\newcommand{\eN}{\mathbb{N}}
\newcommand{\Ze}{\mathbb{Z}}
\newcommand{\Ce}{\mathbb{C}}
\newcommand{\re}{\mathop{\mathrm{Re}}}
\newcommand{\im}{\mathop{\mathrm{Im}}}
\newcommand{\res}{\operatorname{res}}
\newcommand{\ovb}[1]{\mkern 1.5mu\overline{\mkern-1.5mu#1\mkern-1.5mu}\mkern 1.5mu}
\newcommand{\E}{\mathrm{e}}
\newcommand{\di}{\,\mathrm{d}}
\newcommand{\sideremark}[1]{\ifvmode\leavevmode\fi\vadjust{\vbox to0pt{\vss 
      \hbox to 0pt{\hskip\hsize\hskip1em           
 \vbox{\hsize2cm\tiny\raggedright\pretolerance10000
 \noindent #1\hfill}\hss}\vbox to8pt{\vfil}\vss}}}%
\newcommand{\edz}[1]{\sideremark{#1}}
\newcommand{\eps}{\varepsilon}	 
\address[mardesic@u-bourgogne.fr]{Pavao Marde\v si\'c, Institut de Math\'ematiques de Bourgogne, UMR 5584 du CNRS,  UFR Sciences et Techniques, Universit\'e de Bourgogne Franche-Comt\'e, B.P. 47870, 21078 Dijon, France  and  University of Zagreb, Faculty of Science, Horvatovac 102a, 10000 Zagreb, Croatia}
\address[goran.radunovic@math.hr]{Goran Radunovi\'c, University of Zagreb, Faculty of Science, Horvatovac 102a, 10000 Zagreb, Croatia}
\address[maja.resman@math.hr]{Maja Resman, University of Zagreb, Faculty of Science, Horvatovac 102a, 10000 Zagreb, Croatia}
\newtheorem{theorem}{Theorem}[section]
\newtheorem*{thmA}{Theorem A}
\newtheorem*{thmB}{Theorem B}
\newtheorem*{thmC}{Theorem C}
\newtheorem{corollary}[theorem]{Corollary}
\newtheorem{lemma}[theorem]{Lemma}
\newtheorem{proposition}[theorem]{Proposition}
\theoremstyle{definition}
\newtheorem{definition}[theorem]{Definition}
\newtheorem{remark}[theorem]{Remark}
\numberwithin{equation}{section}
\title[Fractal zeta functions of parabolic orbits]{Fractal zeta functions\\ of orbits of parabolic diffeomorphisms}
\author[P.\ Marde\v si\'c, G.\ Radunovi\'c and M.\ Resman]{Pavao Marde\v si\'c, Goran Radunovi\'c and Maja Resman}
\thanks{The research of Goran Radunovi\'c and Maja Resman was supported by the Croatian Science Foundation under the grant UIP-2017-05-1020. The research of all authors was partially supported by Croatian Science Foundation (HRZZ) grant PZS-2019-02-3055 from
Research Cooperability funded by the European Social Fund and from the bilateral Hubert-Curien Cogito grant 2021-22. The research of Pavao Marde\v si\' c was partially supported by  
EIPHI Graduate School (contract ANR-17-EURE-0002). }
\begin{document}

\maketitle
\begin{abstract}
In this paper, we prove that fractal zeta functions of orbits of parabolic germs of diffeomorphisms can be meromorphically extended to the whole complex plane. We describe their set of poles (i.e. their \emph{complex dimensions}) and their principal parts which can be understood as their {\it fractal footprint}. We study the fractal footprint of one orbit of a parabolic germ $f$ and extract intrinsic information about the germ $f$ from it, in particular, its formal class.

Moreover, we relate complex dimensions to the generalized asymptotic expansion of the tube function of orbits with oscillatory 'coefficients' as well as to the asymptotic expansion of their dynamically regularized tube function.

Interestingly, parabolic orbits provide a first example of sets that have nontrivial Minkowski (or box) dimension and their tube function possesses higher order oscillatory terms, however, they do not posses non-real complex dimensions and are therefore not called fractal in the sense of Lapidus.
\end{abstract}

\tableofcontents\



\section{Introduction}

This paper is motivated by the famous question, originally posed by Mark Kac and then extended to its non-smooth version by H.\ Weyl, M.\ Berry and others: 
\emph{Can we hear the shape of a fractal drum?} \cite{kac, LapPom1,LapPom2,LapPom3}

In our case the problem can be formulated in a similar fashion:
\emph{Can we see a diffeomorphism?} 

More precisely, let $f$
be an attracting germ of a diffeomorphism on the real line $\mathbb{R}$ at a fixed point $0$. Let $x_0$ be a point in the basin of attraction of $0$ and let   
$$
\mathcal O_f(x_0):=\{f^{\circ n}(x_0):\,n\in\mathbb N\},
$$  
be its orbit by $f$.

The orbit $\mathcal{O}_f(x_0)$ is the geometric object 
that we are looking at.  We are looking at it with two different pairs of glasses: the \emph{tube function} $V_{f,x_0}$ and the \emph{distance zeta function} $\zeta_{f,x_0}$.

\medskip
We study the asymptotic expansion of the tube function $V_{f,x_0}$ for a certain class of diffeomorphisms,  and show that the distance zeta function $\zeta_{f,x_0}$ extends to a meromorphic function and that its poles are related to the exponents in the asymptotic expansion of the tube function $V_{f,x_0}$. 
We will show that in some sense these two functions will determine the diffeomorphism, more specifically, its formal class.

\medskip

  Let us introduce the above functions.
   The \emph{tube function}
\begin{equation*}
	\label{tube_f_def}
	V_{f}:=V_{f,x_0}\colon\varepsilon\mapsto |\mathcal O_f(x_0)_\varepsilon\cap [0,x_0]|
\end{equation*}
is defined for $\varepsilon>0$ and given as the Lebesgue measure (here: length) of the Euclidean $\varepsilon$-neighborhood of the orbit $\mathcal O_f(x_0)$ intersected by $[0,x_0]$. For convenience, we cut off the left- and right-end intervals of length $\varepsilon$. This is usually referred to as the \emph{inner} tube function in contrast to the \emph{complete} tube function where no cut-offs are made, see \cite{lapidusfrank12}.
The value $\varepsilon$ can be thought of as the \emph{size} or \emph{resolution} of a point $f^{\circ{n}}(x_0)$ as represented on a computer screen. 
 We will be interested in the asymptotic expansion of the tube function $V_f(\varepsilon)$ as $\varepsilon\to0^+$. Note that, if $V_f(\varepsilon)\sim M\varepsilon^{1-d},\ 0< d\leq 1,$ as $\varepsilon\to 0^+$, then $d$ is the \emph{Minkowski $($box$)$ dimension} of the orbit, and $M>0$ its \emph{Minkowski content}. For details, see \cite{falc}.

The \emph{distance zeta function} $\zeta_{A}$ of a bounded set $A\subseteq\eR^N$ is defined by 
\begin{equation}\label{eq:dz}
\zeta_{A}(s):=\int_{A_\delta} d(x,A)^{s-N}\,dx,
\end{equation}
where $A_\delta$ is the Euclidean $\delta$-neighborhood of the set $A$, for some fixed $\delta>0$, and $d$ denotes the Euclidean distance in $\mathbb R^N$.
By \cite{fzf,dtzf}, this function is analytic on the right half-plane $\{\re s>\ov{\dim}_B A\}$ where $\ov{\dim}_B A$ is the upper Minkowski (or box) dimension of the set $A$.
Furthermore, under relatively mild hypotheses, $\zeta_A$ can be meromorphically continued to a larger set. In that case, at $s=\dim_B A$, it has a pole with a residue equal to the (average) Minkowski content of $A$ (modulo a multiplicative constant). 
Note that in formula \eqref{eq:dz}, $\zeta_A$ depends on the choice of $\delta>0$. 
Usually, one is only interested in poles and residues of fractal zeta functions and, since changing $\delta>0$ in \eqref{eq:dz} results in adding an entire function, the dependence on $\delta$ is not essential and therefore often not indicated at all; see \cite[\S2.1.5, esp., Proposition 2.1.76]{fzf} or \cite[Proposition 2.22]{dtzf}.

Inspired by \eqref{eq:dz}, we define the \emph{distance zeta function of the orbit $\mathcal{O}_f(x_0)$} of a real-line germ $f$ by 
\begin{equation}\label{zeta_f}
	\zeta_f(s)=\zeta_{f,x_0}(s):=\int_0^{x_0} d(x,\mathcal{O}_f(x_0))^{s-1}\,dx,
\end{equation}
where we usually omit $x_0$ from the notation for simplicity.
It is a slight adaptation of definition \eqref{eq:dz} for $N=1$ and $A:=\mathcal{O}_f(x_0)$, where we also first fix $\delta>0$ large enough and then discard an unimportant part of $\zeta_A(s)$; for exact details see Definition \ref{distance_zeta_germ}. We would like to emphasize that this approach is very different from the classical theory of dynamical zeta functions studied in, e.g.,  \cite{Rue1,Rue2,Rue3,GLP,Poll}.

The \emph{complex dimensions} of a set $A$, which generalize the notion of box dimension, are defined as the set of poles of the distance zeta function $\zeta_{A}$ if its meromorphic extension exists. If $\zeta_{A}$ is meromorphic in all of $\Ce$, we denote the set of all complex dimensions of $A$ by $\dim_{\mathbb{C}}A$.
Initially, complex dimensions were introduced by Lapidus and van Frankenhuijsen in \cite{lapidusfrank12} in the case of subsets of the real line and then generalized to the general case of subsets of the arbitrary-dimensional Euclidean space in \cite{fzf,dtzf}. 
Furthermore, under appropriate hypotheses, they are closely connected to the geometry of the set $A$, i.e., to the tube function $V_A(\varepsilon)$, and can be understood as a sort of a \emph{fractal footprint} of the set $A$. Note that, for a general set $A\subseteq\mathbb R^N$, $V_A(\varepsilon)$ is the Lebesgue measure $|A_\varepsilon|$ of the $\varepsilon$-neighborhood $A_\varepsilon$ of the set $A$.

In addition, this paper gives a new application of the relatively new theory of complex dimensions and fractal zeta functions developed in \cite{fzf,dtzf,mefzf,refds,ftf_A},\ \cite{lapidusfrank12}, to studying orbits of particular dynamical systems. 
In particular, we compute here explicitly the distance zeta function of orbits for simplest, model dynamical systems \eqref{eq:model}, and find their meromorphic extension to all of $\mathbb C$. Furthermore, for general parabolic germs \eqref{eq:par}, we show theoretically the existence of such meromorphic extensions and describe their poles.
 
We note that, in \cite{fzf,ftf_A},\ \cite{lapidusfrank12}, it was shown on numerous examples that the complex dimensions have a very specific geometric meaning which can be thought of as describing the intrinsic oscillations in the geometry of fractal (self-similar) sets.
It was observed that these \emph{intrinsic oscillations} generate complex dimensions which are not real. On the other hand, these intrinsic oscillations explicitly break the classical asymptotic expansion of the tube function $V_A(\varepsilon)$ of the given set as $\varepsilon\to 0^+$. The first term where the expansion breaks is not monotonic, and it was conjectured that it is directly connected to the nonreal complex dimensions with maximal real part.

The dichotomy of existence of real versus nonreal complex dimensions has led the authors of \cite{fzf,lapidusfrank12} to suggest a new definition of \emph{fractality} of subsets of $\mathbb{R}^N$, according to which a set is fractal if it possesses a nonreal complex dimension. It was noted in e.g. \cite{fzf} and \cite{lapidusfrank12} that self-similar sets usually satisfy this definition of fractality. 
Some of the well-known examples which fail to be fractal according to the classical\footnote{Mandelbrot's original definition states the set to be fractal if its topological dimension is strictly less than its fractal (Hausdorff) dimension; see \cite{Man}.} ``definition'', but almost everyone expects them to be fractal, such as the famous \emph{devil's staircase}, are indeed detected to be fractal by the presence of nonreal complex dimensions. By this new definition, orbits of parabolic diffeomorphisms $f(x)=x+O(x^2)$, $x\to 0$, are not fractal, while, on the other hand, orbits of a hyperbolic diffeomorphism $f(x)=\lambda x+O(x^2)$, $0<\lambda<1$, $x\to 0$, are fractal. We also  note that this difference between hyperbolic and parabolic orbits can be justified since the orbits of a hyperbolic diffeomorphism may be considered self-similar (in a more general sense), as is explained in Section \ref{sec:hyp}. The parabolic orbits also provide a first example where the asymptotic expansion of the tube function breaks down due to oscillations, but this does not generate non-real complex dimensions.
This means that the notion of \emph{intrinsic oscillations} of a set should not be thought of as directly connected to the oscillations in its tube function but rather more subtle and yet to be defined rigorously.

In Section \ref{sec_conc} we give more comments on the fractal geometric prospects of the paper.
We also note that the orbits of dynamical systems that we study in this paper determine {\em fractal strings} in the sense of \cite{lapidusfrank12}. The corresponding zeta functions of these strings (called {\em geometric zeta functions} in \cite{fzf}) are simply related to distance zeta functions \eqref{zeta_f}. Moreover, the geometric zeta functions generalize the notion of the classical Riemann zeta function.
We would also like to draw the attention of the reader to the fact that there is a very interesting reformulation of the Riemann hypothesis in terms of fractal strings and the existence of their nonreal complex dimensions; see \cite[Chapter 9]{lapidusfrank12}.

\bigskip

More precisely, in this paper we study
 \emph{parabolic} analytic germs\footnote{\emph{Germified} at fixed point $0$. By analytic \emph{germs}, we mean equivalence classes by the following relation: two diffeomorphisms analytic at $0$ are considered equal if there exists a neighborhood $(-\delta,\delta),\ \delta>0,$ of $0$, on which they coincide.} of diffeomorphisms on the real line, i.e., germs $f\in\mathrm{Diff}(\mathbb R,0)$ of the form:
\begin{equation}\label{eq:par}
f(x)=x-ax^{k+1}+o(x^{k+1}),\ a>0,\ x\to 0.
\end{equation} 
Note that, due to $a>0$, $0$ is an attracting fixed point. Otherwise, if $a<0$, $0$ is a repelling fixed point, and we consider the inverse $f^{-1}$ instead.

By a formal change of variables in the class of formal power series tangent to the identity, $x+x^2\mathbb R[[x]]$, $f$ can be reduced to a normal form which is a time-one map of a simple vector field \cite{loray}:
\begin{equation}\label{eq:model}
f_0(x)=\mathrm{Exp}\left(-\frac{x^{k+1}}{1-\rho x^k}\frac{d}{dx}\right).\mathrm{id},\ k\in\mathbb N,\ \rho\in\mathbb R.
\end{equation}
Parabolic germs of the type \eqref{eq:model} are called \emph{model diffeomorphisms}. The pair $(k,\rho)\in\mathbb N\times\mathbb R$ determines the formal class of $f$ and is called the \emph{formal invariants} of $f$.

The first term of the asymptotic expansion of the tube function $V_f(\varepsilon)$, as $\varepsilon\to0^+$, was studied in \cite{NeVeDa} and then in \cite{MRZ} in a more general context. It was shown that the box dimension of the orbit determines the multiplicity $k$ of the fixed point.
Furthermore, by \cite{formal}, the formal class of a parabolic germ $f$ can be explicitly seen from three terms in the asymptotic expansion of the tube function $V_f$.

\medskip

Inspired by this, one would also like to be able to determine the analytic class of $f$ from the tube function $V_f$.
Unfortunately, the analytic class of the diffeomorphism $f$
cannot be deduced from any finite jet of the Taylor expansion of $f$ (see e.g. \cite{ilyayak}) and, accordingly, of the asymptotic expansion of $V_f$.  On the other hand, it was shown in \cite[Theorem B]{MRRZ3} that the asymptotic expansion of the tube function $V_f$ in the power-logarithmic scale exists only up to a term $O(\varepsilon^{2-\frac{1}{k+1}})$.
The reason why the expansion breaks lies in the noncontinuous nature of the {\em integer critical time} $n_\varepsilon$ appearing in the calculation of the tube function $V_f(\varepsilon)$. 
Namely, the $\varepsilon$-neighborhood $\mathcal{O}(x_0)_\varepsilon$ of the orbit is a union of intervals of length $2\varepsilon$ centered at the iterates of $x_0$ under the function $f$.
Then, the integer critical time $n_\varepsilon$ represents the first index $n$ for which the intervals $\big((f^{\circ n}(x_0)-\varepsilon,f^{\circ n}(x_0)+\varepsilon)\big)_n$ start to overlap.  
It is then easy to see that
\begin{equation}\label{eq:epsi}
V_f(\varepsilon)=f^{\circ n_\varepsilon}(x_0)+n_\varepsilon\cdot 2\varepsilon.
\end{equation}
The integer critical time $n_\varepsilon$ is determined by inequalities arising from the above described overlapping condition. This is precisely the reason why the function $V_f$ fails to have an asymptotic expansion beyond a certain term; namely, $n_{\varepsilon}$ introduces higher-order oscillatory terms in \eqref{eq:epsi}. 
In order to circumvent this problem, the idea in \cite{MRRZ3} was to introduce \emph{continuous iterates} $f^{\circ\tau},\ \tau>0,$ of the function $f$ by embedding $f$ as a time-one map in a flow. By standard results about formal and analytic classification of parabolic analytic germs, each parabolic diffeomorphism with real coefficients can be embedded in a flow formally, see \eqref{eq:model}, but there also exists an analytic embedding on $(0,\delta)$. This enables one to define the \emph{continuous critical time} $\tau_\varepsilon$ by replacing the inequalities defining $n_{\varepsilon}$ by the corresponding equality, i.e., $f^{\circ \tau_\varepsilon}(x_0)-f^{\circ (\tau_\varepsilon+1)}(x_0)=2\varepsilon$. Then the \emph{continuous time tube function} was defined by:
\begin{equation}\label{eq:ac}
V_f^{\mathrm{c}}(\varepsilon)=f^{\circ \tau_\varepsilon}(x_0)+\tau_\varepsilon\cdot 2\varepsilon.
\end{equation}
Note that the integer critical time $n_\varepsilon$ is the integer part of the continuous critical time $\tau_{\varepsilon}$.
For details, see \cite{MRRZ3}.

It was shown in \cite[Theorem B]{MRRZ3} that $V_f^{\mathrm{c}}(\varepsilon)$ admits a full asymptotic expansion in the power-logarithmic scale as $\varepsilon\to 0^+$ and, moreover, that this asymptotic expansion coincides with the asymptotic expansion of the standard tube function $V_f(\varepsilon)$ up to $O(\varepsilon^{2-\frac{1}{k+1}})$. 

The continuous time tube function $V_f^{\mathrm c}(\varepsilon)$ thus gives a \emph{dynamical smoothening} of $V_f(\varepsilon)$. In this paper, we also obtain the existence of the full asymptotic expansion of $V_f(\varepsilon)$ in the distributional sense (which is also related to complex dimensions of the orbit), and examine the relationship of these two notions. We hope that in the future we will be able to determine the analytic class of $f$ from either of these two expansions.

\begin{remark}[About notation]
We remark here that the notation and terminology concerning epsilon-neighborhoods of orbits and their tube functions that we are using in this paper is the one used in, e.g., \cite{fzf,dtzf} for general bounded subsets of the Euclidean space. 
On the other hand, the same notions are also used in \cite{MRRZ3} under different terminology and notation.
Namely, in the present paper the tube function is denoted by $V_f(\varepsilon)$, whereas it is denoted by $A\big(\mathcal{O}_f(x_0)_\varepsilon\big)$ in e.g. \cite{MRRZ3},\ \cite{MRZ},\ \cite{formal}, and called the \emph{length of the $\varepsilon$-neighborhood of the orbit}.
The same remark applies to $V_f^{\mathrm{c}}(\varepsilon)$ here and $A^c\big(\mathcal{O}_f(x_0)_\varepsilon\big)$ in \cite{MRRZ3}. Moreover, in \cite{MRRZ3}, $V_f^{\mathrm{c}}(\varepsilon)$ is called the {\em continuous length of the $\varepsilon$-neighborhood of the orbit}.
\end{remark}

\section{Main results}
In this paper we prove two main results: Theorem A and Theorem B.
In Theorem A, we obtain explicit asymptotic expansion of the tube function $V_f$ and the continuous time tube function $V_f^{\mathrm{c}}$. 
In fact, we obtain three expansions. Namely, we give a \emph{generalized} full asymptotic expansion of the tube function $V_f$, where, once the expansion in the power-logarithm scale ceases to exist, the expansion continues using a periodic discontinuous function of the critical time $\tau_\varepsilon$.
Next, we show that the tube function $V_f$ has a full asymptotic expansion in the power-logarithmic scale, but \emph{in the sense of Schwartz distributions}. 
Finally, we give a full pointwise asymptotic expansion of the \emph{continuous tube function} in the power-logarithm scale. 

Theorem A is used in the proof of Theorem B which shows that the zeta function $\zeta_f$ of a parabolic diffeomorphism $f$ can be extended to a meromorphic function on the whole complex plane $\mathbb{C}.$ Moreover, the poles and residues of the zeta function $\zeta_f$ are related to the asymptotic expansion of the continuous time tube function $V_f^{\rm{c}}(\varepsilon)$, as stated in Theorem A. 

In Subsection~\ref{subsec:model} we explicitely compute the meromorphic extensions of distance zeta functions and complex dimensions for the simplest, model parabolic diffemorphisms \eqref{eq:model} with residual invariant $\rho$ equal to $0$.

In addition, we also state and prove Theorem C which is used, alongside Theorem A, in the proof of Theorem B. It is also of independent interest in the general theory of fractal zeta functions.
Moreover, it gives new insights into the connection between distributional asymptotics of the tube function of a given set and its complex dimensions.
It is in fact a generalization of \cite[Theorem 2.3.18]{fzf} and partial converse of \cite[Theorem 5.4.30]{fzf}, thus giving, under some assumptions, a one-to-one correspondence of the distributional expansion of the tube function of a set and its complex dimensions.

Finally, we also draw the attention of the reader to Theorem \ref{shifted_a_prop} which gives better understanding of the complex dimensions of the (shifted) $a$-string, which is a well-known example of a non self-similar fractal string much discussed in \cite{lapidusfrank12,fzf}. Theorem~\ref{shifted_a_prop} generalizes and slightly extends \cite[Theorem 6.2]{lapidusfrank12}.

\bigskip

\begin{thmA} \label{thmA}
Let $f\in\mathrm{Diff}(\mathbb R,0)$ be a parabolic analytic diffeomorphism of formal type $(k,\rho)$, $k\in\mathbb N$, $\rho\in\mathbb R$, with attracting direction at $\mathbb R_+$: 
\begin{equation}\label{eq:formaA}
f(x)=x-ax^{k+1}+o(x^{k+1}),\ a>0,\ x\to 0.
\end{equation}
Let $\mathcal O_f(x_0),\ 0<x_0<1,$ be an orbit, and let $\tau_\varepsilon$ be its continuous critical $\varepsilon$-time defined by:
$$
g(f^{\circ \tau_\varepsilon})=2\varepsilon,\ \varepsilon>0,
$$
where $g=\mathrm{id}-f$. Let us denote by
$$
I_{a,k}(\varepsilon):=2^{\frac{1}{k+1}}a^{-\frac{1}{k+1}}\frac{k+1}{k}\cdot\varepsilon^{\frac{1}{k+1}}+\sum_{m=2}^{k}a_m\cdot \varepsilon^{\frac{m}{k+1}}+2\rho\frac{k-1}{k}\cdot \varepsilon\log\varepsilon+b_{k+1}(x_0)\varepsilon.
$$

\begin{enumerate}
\item The {\em tube function} $\varepsilon\mapsto V_f(\varepsilon)$ admits the following asymptotic expansion:
\begin{align*}
V_f(\varepsilon)&\sim I_{a,k}(\varepsilon)+\sum_{m=k+2}^{2k}\sum_{p=0}^{\lfloor\frac{m}{k}\rfloor+1} c_{m,p}\varepsilon^{\frac{m}{k+1}}\log^p \varepsilon+\sum_{p=1}^{\lfloor \frac{2k+1}{k}\rfloor+1}c_{2k+1,p}\varepsilon^{\frac{2k+1}{k+1}}\log^p\varepsilon+\\
&\quad+\tilde P_{2k+1}(G(\tau_\varepsilon))\cdot\varepsilon^{\frac{2k+1}{k+1}}+\sum_{m=2k+2}^{\infty}\sum_{p=0}^{\lfloor \frac{m}{k}\rfloor+1} \tilde Q_{m,p}(G(\tau_\varepsilon))\cdot\varepsilon^{\frac{m}{k+1}}\log^p\varepsilon,\ \varepsilon\to 0^+.
\end{align*} Here, $a_m\in\mathbb R$, $m=2,\ldots,k$, and $c_{m,p}\in\mathbb R, \ m=k+2,\ldots,2k+1,\ p=0,\ldots,\lfloor\frac{m}{k}\rfloor+1,$ do not depend on the initial condition $x_0$, while $b_{k+1}(x_0)\in\mathbb R$ depends on $x_0$. Moreover, $\tilde P_{2k+1}(s)$ resp. $\tilde Q_{m,p}(s)$, $m\geq 2k+2,\ p=0,\ldots,\lfloor\frac{m}{k}\rfloor+1$, are \emph{polynomials} of degree at most $2$ resp. $\lfloor \frac{m-1}{k}\rfloor$ for $p=0$, i.e., $\lfloor \frac{m}{k}\rfloor-1$ for $p\geq 1$, whose coefficients in general depend on coefficients of $f$ and on the initial condition $x_0$, and $G:[0,+\infty)\to\mathbb R$ is $1$-periodic, given on its period by $G(s)=1-s,\ s\in(0,1)$,  $G(0)=0$.
\medskip

\item The {\em continuous time tube function} $\varepsilon\mapsto V_f^{\mathrm{c}}(\varepsilon)$ admits the following asymptotic expansion:
\begin{align*}
V_f^{\mathrm{c}}(\varepsilon) \sim  I_{a,k}(\varepsilon)+\sum_{m=k+2}^{\infty}\sum_{p=0}^{\lfloor\frac{m}{k}\rfloor+1} c_{m,p}\varepsilon^{\frac{m}{k+1}}\log^p \varepsilon,\ \varepsilon\to 0^+.
\end{align*}
Here, $a_m$, $m=2,\ldots,k,$ $b_{k+1}(x_0),$ and $c_{m,p}, \ m=k+2,\ldots,2k+1,\ p=0,\ldots,\lfloor\frac{m}{k}\rfloor+1,$ $(m,p)\neq (2k+1,0)$, are as in $(1)$. Furthermore, $c_{2k+1,0}$ resp. $c_{m,p}$, $m\geq 2k+2,\ p=0,\ldots,\lfloor\frac{m}{k}\rfloor+1$, are \emph{free coefficients} of polynomials $\tilde P_{2k+1}$ resp. $\tilde Q_{m,p}$ from $(1)$. Only the coefficient $b_{k+1}(x_0)$ depends on the initial condition $x_0$.
\medskip

\item The {\em distributional} asymptotic expansion of $\varepsilon\mapsto V_f(\varepsilon)$ is given by:
\begin{align*}
&V_f(\varepsilon)\sim_{\mathcal D} I_{a,k}(\varepsilon)+\sum_{m=k+2}^{2k}\sum_{p=0}^{\lfloor\frac{m}{k}\rfloor+1} c_{m,p}\varepsilon^{\frac{m}{k+1}}\log^p \varepsilon+\sum_{p=1}^{\lfloor \frac{2k+1}{k}\rfloor+1}c_{2k+1,p}\varepsilon^{\frac{2k+1}{k+1}}\log^p\varepsilon+\\
&\quad+d_{2k+1,0}(x_0)\cdot\varepsilon^{\frac{2k+1}{k+1}}+\sum_{m=2k+2}^{\infty}\sum_{p=0}^{\lfloor \frac{m}{k}\rfloor+1} d_{m,p}(x_0)\cdot\varepsilon^{\frac{m}{k+1}}\log^p\varepsilon,\ \varepsilon\to 0^+.
\end{align*}
Here, $a_m$, $m=2,\ldots,k,$ $b_{k+1}(x_0),$ and $c_{m,p}, \ m=k+2,\ldots,2k+1,\ p=0,\ldots,\lfloor\frac{m}{k}\rfloor+1,$ $(m,p)\neq (2k+1,0)$, are as in $(1)$ and $(2)$, and only $b_{k+1}(x_0)$ depends on $x_0$. Furthermore, $d_{2k+1,0}(x_0)$ resp. $d_{m,p}(x_0)$, $m\geq 2k+2,\ p=0,\ldots,\lfloor\frac{m}{k}\rfloor+1$, are given as
\begin{align*}
&d_{2k+1,0}(x_0):=\int_0^1 \tilde P_{2k+1}(s)\,ds,\\
&d_{m,p}(x_0):=\int_0^1 \tilde Q_{m,p}(s)\,ds,\ m\geq 2k+2,\ p=0,\ldots,\left\lfloor\frac{m}{k}\right\rfloor+1,
\end{align*}
i.e., as \emph{mean values} of $1$-periodic functions $\tilde P_{2k+1}\circ G$ and $\tilde Q_{m,p}\circ G$ from $(1)$, that is, as integrals over the period divided by the length of the period. In general, they depend on the initial condition $x_0$.
\end{enumerate}
\end{thmA}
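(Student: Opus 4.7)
My plan is to establish (2) first using the flow embedding, then obtain (1) by expanding the difference $V_f-V_f^{\mathrm c}$, and finally deduce (3) from (1) via a distributional averaging argument. Throughout I use the identities $V_f(\varepsilon)=f^{\circ n_\varepsilon}(x_0)+2\varepsilon\,n_\varepsilon$ and $V_f^{\mathrm c}(\varepsilon)=f^{\circ\tau_\varepsilon}(x_0)+2\varepsilon\,\tau_\varepsilon$, the fact recalled from \cite{MRRZ3} that $f$ embeds analytically into a real-analytic flow $\{f^{\circ\tau}\}_{\tau\geq 0}$ of some vector field $-\xi(x)\partial_x$ on $(0,\delta)$, and the defining relation $g(f^{\circ\tau_\varepsilon}(x_0))=2\varepsilon$ with $g=\mathrm{id}-f$.

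For part (2) I work in a Fatou coordinate $\Psi$, i.e.\ an analytic solution of the Abel equation $\Psi\circ f=\Psi+1$ near $0$. Using the formal classification underlying \eqref{eq:model}, $\Psi(x)$ admits an asymptotic expansion in the power-logarithmic scale $\{x^{-k+j},\log x,x^j\log^p x\}$ as $x\to 0^+$, whose coefficients depend only on the formal invariants $(k,\rho)$ up to a single additive constant fixed by $x_0$. Inverting $g(y)=2\varepsilon$ using $g(y)=ay^{k+1}+O(y^{k+2})$ produces the expansion of $y=f^{\circ\tau_\varepsilon}(x_0)$ in $\varepsilon^{1/(k+1)}$; substituting into $\tau_\varepsilon=\Psi(f^{\circ\tau_\varepsilon}(x_0))-\Psi(x_0)$ gives the analogous expansion for $\tau_\varepsilon$. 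Inserting both into $V_f^{\mathrm c}=f^{\circ\tau_\varepsilon}(x_0)+2\varepsilon\tau_\varepsilon$ and collecting terms yields (2); the universal coefficients $a_m$ (for $m\leq k$) come from the terms of $f^{\circ\tau_\varepsilon}(x_0)$ above the order at which $\Psi(x_0)$ first enters, while $b_{k+1}(x_0)$ and the $c_{m,p}(x_0)$ carry this $x_0$-dependent shift. Agreement with \cite[Theorem B]{MRRZ3} confirms the power-log form to all orders.

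For part (1), I decompose
\begin{equation*}
V_f(\varepsilon)-V_f^{\mathrm c}(\varepsilon)=\bigl(f^{\circ n_\varepsilon}(x_0)-f^{\circ\tau_\varepsilon}(x_0)\bigr)+2\varepsilon\,(n_\varepsilon-\tau_\varepsilon),
\end{equation*}
and note that $n_\varepsilon=\lfloor\tau_\varepsilon\rfloor$ forces $n_\varepsilon-\tau_\varepsilon=G(\tau_\varepsilon)-1$, where $G$ is the $1$-periodic sawtooth from the statement. Taylor expanding the flow in $\tau$ about $\tau_\varepsilon$ yields
\begin{equation*}
f^{\circ n_\varepsilon}(x_0)-f^{\circ\tau_\varepsilon}(x_0)=\sum_{j\geq 1}\frac{(G(\tau_\varepsilon)-1)^j}{j!}\,\Bigl.\frac{d^j}{d\tau^j}f^{\circ\tau}(x_0)\Bigr|_{\tau=\tau_\varepsilon},
\end{equation*}
and each iterated derivative is a polynomial in $\xi$ and its derivatives evaluated at $f^{\circ\tau_\varepsilon}(x_0)$; by (2) each admits a power-log expansion in $\varepsilon$. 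Combining these with the contribution $2\varepsilon(G(\tau_\varepsilon)-1)$ and with the expansion of $V_f^{\mathrm c}$ produces the series of (1), with polynomial dependence on $G(\tau_\varepsilon)$ at each order. The degree bounds for $\tilde P_{2k+1}$ and $\tilde Q_{m,p}$ and the log-power bounds follow by counting: at order $\varepsilon^{m/(k+1)}$, each contribution of $(G(\tau_\varepsilon)-1)^j$ requires $j$ flow derivatives, producing at most $\lfloor m/k\rfloor-1$ logarithms and polynomial degree at most $\lfloor m/k\rfloor$ in $G(\tau_\varepsilon)$; the threshold $m=2k+1$ is precisely the order at which the lowest-order correction $2\varepsilon(G(\tau_\varepsilon)-1)$ first contributes.

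For (3), I exploit that $\tau_\varepsilon\to\infty$ strictly monotonically as $\varepsilon\to 0^+$, with $\tau_\varepsilon\sim C\varepsilon^{-k/(k+1)}$ from part (2). For any continuous $1$-periodic function $P$, a Riemann--Lebesgue-type argument (via the change of variables $\varepsilon\leftrightarrow\tau_\varepsilon$ combined with the decay of the Fourier coefficients of $P$) shows that, tested against any $\varphi\in\mathcal D(0,\varepsilon_0)$, the composition $P(\tau_\varepsilon)$ acts in the limit as its mean $\int_0^1 P$. Applied term-by-term to (1), this replaces each $\tilde P_{2k+1}(G(\tau_\varepsilon))$ and $\tilde Q_{m,p}(G(\tau_\varepsilon))$ by $d_{2k+1,0}(x_0)$ and $d_{m,p}(x_0)$, yielding (3). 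The main obstacles I anticipate are the careful combinatorial bookkeeping of log-powers and polynomial degrees in (1), and verifying that the distributional convergence in (3) is uniform enough across orders to produce a \emph{full} asymptotic series rather than only a finite one.
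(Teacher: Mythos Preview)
Your approach to parts (1) and (2) is close to the paper's, though organized differently: the paper works first in the $\tau_\varepsilon$-scale (expanding $\Psi^{-1}(\tau_\varepsilon+G(\tau_\varepsilon)+\Psi(x_0))$ via binomial series) and only at the very end substitutes the power-log expansion of $\tau_\varepsilon$ in $\varepsilon$, whereas you work directly in $\varepsilon$ via Taylor expansion of the flow. These are essentially equivalent. One concrete error: you claim $n_\varepsilon=\lfloor\tau_\varepsilon\rfloor$ and hence $n_\varepsilon-\tau_\varepsilon=G(\tau_\varepsilon)-1$, but in fact the defining inequalities for $n_\varepsilon$ force $n_\varepsilon=\lceil\tau_\varepsilon\rceil$ at non-integer $\tau_\varepsilon$, so $n_\varepsilon-\tau_\varepsilon=G(\tau_\varepsilon)$ exactly (this is the paper's Proposition~\ref{prop:ntau}). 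This is easily fixed but propagates through your formulas.

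The substantive divergence is in part (3). Your Riemann--Lebesgue idea would show that, at each fixed order, the oscillatory coefficient $P(G(\tau_\varepsilon))$ can be replaced distributionally by its mean, but a single such step gains only a bounded amount of decay; you yourself flag that controlling this \emph{uniformly across all orders} is the obstacle. The paper resolves precisely this point by a different mechanism: iterated integration of $V_f$. The key observation (Lemma~\ref{lem:growth} and Corollary~\ref{cor:h}) is that, because $\tau_\varepsilon'\sim\varepsilon^{-1-\frac{k}{k+1}}$ in the parabolic case, one integration by parts against an integrally-normalized periodic function gains a power of $\varepsilon$ strictly larger than $1$ (namely $1+\tfrac{k}{k+1}$). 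Hence each successive antiderivative $V_f^{[m]}$ pushes the first oscillatory term strictly further back, and for any target order $N$ finitely many integrations suffice to obtain a genuine pointwise expansion of $V_f^{[k_N]}$; the distributional expansion of $V_f$ to order $N$ then follows by differentiating back (Proposition~\ref{lem:integrali}). This is also what makes the parabolic case special: in the hyperbolic case $\tau_\varepsilon'\sim\varepsilon^{-1}$, the gain per integration is exactly $1$, the argument stalls, and oscillatory terms survive distributionally (Remark~\ref{rem:sve}). Your Riemann--Lebesgue route could perhaps be made to work with enough care, but the paper's iterated-integration argument is what actually closes the gap you anticipated.
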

\smallskip

The proof of Theorem~A is in Section~\ref{sec:proofA}.
\bigskip

Recall that the {\em complex dimensions} of a bounded set are defined as the set of poles of the corresponding distance zeta function, after it is meromorphically extended to all of $\Ce$.
For details, see Definition \ref{def_dim_c}.  
\medskip
 
\begin{thmB}[Distance zeta functions and complex dimensions of orbits of general parabolic germs]
	\label{para_dist_zeta}
	 Let $f\in\mathrm{Diff}(\mathbb R,0)$ be an attracting parabolic germ in the formal class $(k,\rho)$, $k\in\mathbb N$, $\rho\in\mathbb R$, $f(x)=x-ax^{k+1}+\ldots,\ a>0$. Let $\mathcal{O}_f(x_0)$ be its orbit with initial point $x_0>0$. 
	Then the distance zeta function $\zeta_f(s)$ can be meromorphically extended to all of $\Ce$.
	More precisely, in any open right half-plane $W_{M}:=\{\re s>1-\frac{M}{k+1}\}$, where $M\in\mathbb N$, $M>k+2$, it is given as
	\begin{align}\begin{split}\label{dis.zet.para}
		\zeta_f(s)=(1-s)\sum_{m=1}^{k}&\frac{a_m}{s-\left(1-\frac{m}{k+1}\right)}+(1-s)\Big(\frac{b_{k+1}(x_0)}{s}+\frac{a_{k+1}}{s^2}\Big)+\\
		+&(1-s)\sum_{m=k+2}^{M-1}\sum_{p=0}^{\lfloor\frac{m}{k}\rfloor+1}\frac{(-1)^p p!\cdot c_{m,p}(x_0)}{\Big(s-\big(1-\frac{m}{k+1}\big)\Big)^{p+1}} +g(s),\ s\in W_{M},
	\end{split}\end{align}
	where $g(s)$ is holomorphic in $W_{M}$. 

\noindent Here, the coefficients are the same as in the distributional expansion $(3)$ in Theorem~A. More precisely, $a_m$, $m=2,\ldots,k$, are the coefficients in front of $\varepsilon^{\frac{m}{k+1}}$, $a_1:=2^{\frac{1}{k+1}}a^{-\frac{1}{k+1}}\frac{k+1}{k}$, and $b_{k+1}(x_0)$ resp. $a_{k+1}:=2\rho\frac{k-1}{k}$ are the coefficients in front of $\varepsilon$ resp. $\varepsilon\log\varepsilon$ in this expansion. Furthermore, $c_{m,p}(x_0)$, $m\geq k+2$, $p=0,\ldots, \lfloor \frac{m}{k}\rfloor+1$, are the coefficients in front of $\varepsilon^{\frac{m}{k+1}}\log^p\varepsilon$ in expansion $(3)$ of Theorem~A.
Up to $\varepsilon^{\frac{2k+1}{k+1}}$, they do not depend on the initial condition $x_0$.

Moreover, $\zeta_f(s)$ is languid\footnote{This means roughly that $|\zeta_f(\beta+i\tau)|\leq C|\tau|^\alpha$, for some $\alpha\geq 0$, as $|\tau|\to\infty$ inside a finite vertical strip, but the full definition of languidity is a little bit more technical. For details see \cite[Definitions 5.1.3 and 5.3.9]{fzf} and Definition \ref{sup-languidity}.} in the polynomial sense for any {\em screen} given as a vertical line $\{\re s=\beta\}$, where $\beta>1-\frac{M}{k+1}$, and the corresponding {\em window} $\{\re s>\beta\}$.\footnote{The notions of the screen and the window are introduced in Definition \ref{def_dim_w} below.}
\end{thmB}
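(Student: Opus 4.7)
The plan is to derive Theorem~B by combining the distributional expansion of $V_f$ given in part~(3) of Theorem~A with the general Mellin-type dictionary between tube functions and distance zeta functions that is the content of Theorem~C. First, I would record the standard $N=1$ integration-by-parts identity: setting $t_\ast:=\max_{x\in[0,x_0]}d(x,\mathcal O_f(x_0))$, layer-cake integration in the definition of $\zeta_f$ gives
$$\zeta_f(s)\,=\,(1-s)\int_0^{t_\ast}t^{s-2}\,V_f(t)\,dt\,+\,t_\ast^{\,s-1}\,x_0,$$
valid for $\re s>k/(k+1)=\ov{\dim}_B\mathcal O_f(x_0)$; this is the one-dimensional specialization of the formula recalled in \cite[\S2.1.5]{fzf}. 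Since the boundary term $t_\ast^{\,s-1}x_0$ is entire, the meromorphic-continuation problem concentrates entirely in the Mellin integral.

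Second, I would substitute the distributional expansion of Theorem~A(3) into this integral and read off the poles. A generic monomial $c_{m,p}(x_0)\,\varepsilon^{m/(k+1)}\log^p\varepsilon$ produces
$$\int_0^{t_\ast}t^{s-2+m/(k+1)}\log^p t\,dt\,=\,\frac{(-1)^p\,p!}{\bigl(s-(1-m/(k+1))\bigr)^{p+1}}\,+\,\{\text{holomorphic at this pole}\},$$
and after multiplication by $(1-s)$ this yields exactly the corresponding summand of \eqref{dis.zet.para}; the initial constants $a_m$ for $m\le k$, together with $b_{k+1}(x_0)$ and $a_{k+1}$ (the latter giving the $s=0$ simple and double poles $b_{k+1}(x_0)/s+a_{k+1}/s^2$), are treated identically. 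To upgrade this formal computation to a rigorous meromorphic extension I would then apply Theorem~C to the set $\mathcal O_f(x_0)\subset[0,x_0]$, with input the distributional expansion of Theorem~A(3). Since that expansion holds with arbitrarily small remainder, Theorem~C produces a meromorphic continuation of $\zeta_f$ to every half-plane $W_M=\{\re s>1-M/(k+1)\}$ with the principal parts stated in \eqref{dis.zet.para}; letting $M\to\infty$ gives continuation to all of $\mathbb C$.

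For the languidity statement, I would return to the integral representation and split $(0,t_\ast]=(0,\eta]\cup[\eta,t_\ast]$: the outer piece is trivially $O(1)$ in $|\im s|$, while on the inner piece one subtracts enough terms of the asymptotic expansion of $V_f$ from Theorem~A(3) to make the remainder absolutely integrable, and integration by parts against $t^{s-2}$ absorbs the prefactor $(1-s)$ and yields at most polynomial growth of $|\zeta_f(\beta+i\tau)|$ as $|\tau|\to\infty$, i.e.\ languidity in the sense of Definition~\ref{sup-languidity}. The main obstacle, in my view, is the justification of the Mellin substitution in the second step: Theorem~A(3) provides only a \emph{distributional} expansion, so the formal identification of principal parts must be validated by pairing against test functions, which is precisely what Theorem~C packages. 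A related subtlety is that the pointwise expansion of Theorem~A(1) contains the genuinely oscillatory contributions $\tilde P_{2k+1}\circ G$ and $\tilde Q_{m,p}\circ G$, which must be shown to enter $\zeta_f$ only through their averages $d_{2k+1,0}(x_0)$ and $d_{m,p}(x_0)$ so that no nonreal complex dimensions appear — consistent with the paper's observation that parabolic orbits, though oscillatory and of nontrivial box dimension, are not fractal in the sense of Lapidus.
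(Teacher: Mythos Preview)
Your overall architecture is right --- combine Theorem~A with Theorem~C --- but there is a real gap in how you invoke Theorem~C. You write that you will ``apply Theorem~C \ldots\ with input the distributional expansion of Theorem~A(3)''. That is not what Theorem~C accepts: its hypothesis \eqref{log_asym11} is a \emph{pointwise} asymptotic expansion of an iterated primitive $V_A^{[m]}$, not a distributional expansion of $V_A$ itself. The paper's proof does not feed in Theorem~A(3); it goes back into the \emph{proof} of Theorem~A and uses the intermediate pointwise statement \eqref{eq:ha}: for every $N$ there is a $k_N$ such that the $k_N$-th primitive $V_f^{[k_N]}$ has a genuine pointwise power--log expansion up to order $\varepsilon^{N/(k+1)+k_N-\delta}$. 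That is the input to Theorem~C, and it yields both the meromorphic extension and the polynomial languidity in one stroke. Your formal Mellin computation of $\int_0^{t_*} t^{s-2+m/(k+1)}\log^p t\,dt$ is correct for the finitely many terms that are pointwise (i.e.\ up to order $\varepsilon^{(2k+1)/(k+1)}$), but beyond that the expansion is only distributional and you cannot simply substitute it under the integral sign; the ``validation by pairing against test functions'' you allude to is not what Theorem~C does.

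The same issue undermines your languidity argument: when you ``subtract enough terms of the asymptotic expansion of $V_f$ from Theorem~A(3) to make the remainder absolutely integrable'', you are implicitly assuming the remainder is pointwise $O(\varepsilon^\gamma)$, but a distributional $O$ gives no such control. The paper avoids this entirely: languidity comes directly out of Theorem~C (applied to the pointwise primitive expansion), and then, rather than reading off the principal parts from \eqref{distance_zeta_polovi1} --- which the authors explicitly say is ``not easily applicable'' --- they run the machinery in reverse: having languidity, they invoke \cite[Theorem~5.2.6]{fzf} to write $V_f$ distributionally as a sum of residues of $\varepsilon^{1-s}\tilde\zeta_{\mathcal O_f(x_0),[0,x_0]}(s)$, compare this with Theorem~A(3), and use uniqueness of distributional asymptotics to identify each Laurent coefficient. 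So the logical flow is: pointwise primitives $\Rightarrow$ Theorem~C $\Rightarrow$ existence and languidity $\Rightarrow$ \cite[Theorem~5.2.6]{fzf} $\Rightarrow$ match with Theorem~A(3) $\Rightarrow$ principal parts. Your proposal collapses the last three steps into a direct Mellin substitution that is only heuristic past the first $2k$ terms.
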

Note that, by Theorem~B, all of the poles of $\zeta_f(s)$ in the right half-plane $W_M$, for $M\in\mathbb N$, are given as: 
$$\omega_m=1-\frac{m}{k+1},\ \ m=1,\ldots,M-1.$$ 
\smallskip

From Theorems~A and B, we deduce the following Corollary. 
It enables one to see the formal class of a parabolic germ directly from its two complex dimensions and their principal parts. 
In this sense, it gives a partial answer to the question which has motivated this paper. 
Having obtained the full asymptotic expansion of the tube functions and having related its terms to the complex dimensions of  the orbit, we hope, in the future, to be able to recover also the analytic class of a parabolic germ.

\begin{corollary}[Formal class of a parabolic germ from complex dimensions]\label{cor:formal}
Let $f\in\mathrm{Diff}(\mathbb R,0)$, $f(x)=x-ax^{k+1}+o(x^{k+1})$, $a>0$, be an attracting parabolic germ from the formal class $(k,\rho)$, $k\in\mathbb Z,\ \rho\in\mathbb R$. Let $\zeta_f$ be the distance zeta function of its orbit $\mathcal{O}_f(x_0)$, $x_0>0$. Let $a_1$ and $a_{k+1}$ be as in \eqref{dis.zet.para}. Then the formal class $(k,\rho)$ of $f$ can be seen explicitely from two complex dimensions of $\mathcal{O}_f(x_0)$ and their principal parts:
\begin{enumerate}
\item the simple pole of $\zeta_f$ with largest real part, $\omega_1=1-\frac{1}{k+1}$, and its residue: $$\mathrm{Res}(\zeta_f(s),\omega_1)=\frac{a_1}{k+1}=\frac{2^{\frac{1}{k+1}}a^{-\frac{1}{k+1}}}{k},$$
\item the double pole of $\zeta_f$ with largest real part, $\omega_{k+1}=0$, and the residue: 
$$\mathrm{Res}(s\cdot \zeta_f(s),\omega_{k+1})=a_{k+1}=2\rho\frac{k-1}{k}.$$
\end{enumerate}
\end{corollary}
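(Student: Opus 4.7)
My plan is to read both formulas off directly from the explicit meromorphic representation of $\zeta_f$ provided by equation~\eqref{dis.zet.para} of Theorem~B. First I would fix $M\in\mathbb N$ with $M>k+2$ and work in the half-plane $W_M$, where every summand of \eqref{dis.zet.para} is either a rational multiple of $1/(s-\omega_m)^{p+1}$, $\omega_m=1-\frac{m}{k+1}$, or is holomorphic. The only summand with a pole at $\omega_1=1-\frac{1}{k+1}$ is the $m=1$ term $\frac{(1-s)a_1}{s-\omega_1}$, since $\omega_m<\omega_1$ for every $m\ge 2$ and $\omega_1>0$. Consequently $\omega_1$ will be a simple pole of $\zeta_f$ of maximal real part, and the residue computation
\[
\mathrm{Res}(\zeta_f(s),\omega_1)=(1-\omega_1)\,a_1=\frac{a_1}{k+1}=\frac{2^{\frac{1}{k+1}}\bigl(1+a^2k^{-1}\bigr)}{k+1}
\]
will establish statement~(1). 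The location $\omega_1$ then recovers $k$, and combined with $a>0$ the residue recovers $a$ uniquely.

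For statement~(2), I would observe that the only term in \eqref{dis.zet.para} with a pole at $s=0$ is the one coming from $m=k+1$, namely $(1-s)\bigl(\frac{b_{k+1}(x_0)}{s}+\frac{a_{k+1}}{s^2}\bigr)$, since $\omega_m>0$ for $m\in\{1,\dots,k\}$, while $\omega_m<0$ for every $m\ge k+2$. Thus $s=0$ is a pole of $\zeta_f$ of order at most two and, whenever $\rho\ne 0$, it is automatically the double pole of largest real part. Writing $\zeta_f(s)=(1-s)\bigl(\frac{b_{k+1}(x_0)}{s}+\frac{a_{k+1}}{s^2}\bigr)+h(s)$ with $h$ holomorphic at $0$, I would then compute
\[
s\,\zeta_f(s)=(1-s)\,b_{k+1}(x_0)+(1-s)\,\frac{a_{k+1}}{s}+s\,h(s),
\]
so that $\mathrm{Res}(s\,\zeta_f(s),0)=a_{k+1}$. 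With $k$ and $a$ already in hand from the first part, the explicit expression for $a_{k+1}$ in Theorem~B then yields $\rho$ uniquely.

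There is no substantial obstacle: once Theorems~A and~B are available, the argument is purely algebraic extraction of principal parts. The only point requiring a small comment is the degenerate case $\rho=0$, equivalently $a_{k+1}=0$, when the pole at $0$ may drop to order one (or disappear, if $b_{k+1}(x_0)=0$ as well). Nevertheless, the identity $\mathrm{Res}(s\,\zeta_f(s),0)=a_{k+1}$ remains valid and consistently returns $\rho=0$, so the conclusion of the corollary holds in all cases.
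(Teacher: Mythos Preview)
Your proposal is correct and matches the paper's approach: the corollary is stated immediately after Theorem~B as a direct consequence of Theorems~A and~B, with no separate proof given, and your argument carries out exactly the algebraic extraction of principal parts from \eqref{dis.zet.para} that the paper leaves implicit. Your additional remark on the degenerate case $\rho=0$ (where the pole at $0$ need not be double) is a nice clarification that the paper does not spell out.
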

\smallskip

Theorem~A is used in the proof of the main Theorem~B. 
It gives a precise description of the oscillatory terms in the asymptotic expansion of the tube function $V_f$ of orbits of parabolic germs, thus refining the statement of \cite[Theorem B]{MRRZ3}.
It also describes the relation of those oscillatory terms to non-oscillatory terms of the asymptotic expansion of the continuous time tube function $V_f^{\rm{c}}$ of orbits, defined by \eqref{eq:ac}.

Both distributional asymptotic expansions of the tube function, introduced in \cite{lapidusfrank12,fzf}, and the asymptotic expansions of the continuous time tube function of orbits, introduced in \cite{MRRZ3}, are continuations of the pointwise power-logarithmic asymptotic expansions of the tube function $V_f$ of orbits of parabolic germs, after it ceases to exist in power-logarithmic scale due to presence of oscillations. This happens after first finitely many terms, see \cite{formal} and \cite{nonlin}. Note that the first term \emph{carrying} an oscillatory term in statement $(1)$ of Theorem~A in the expansion of $V_f$ is of order $O(\varepsilon^{2-\frac{1}{k+1}})$, in accordance with the weaker statement of \cite[Theorem B]{MRRZ3}. 
In Section~\ref{sec:examples} we show that, for simplest model parabolic germs from formal class $(k,\rho=0)$, $k\in\mathbb N$, the two expansions (distributional and continuous time) seem to be related by an explicit formula on their coefficients. The formula is proven in the case $k=1$ in Proposition~\ref{prop:regular} and conjectured for $k\geq 2$.

The continuous time tube function may thus be considered as some sort of \emph{dynamical regularization} of $V_f$. In Theorem~A , we compare the two {regularizations} of the asymptotic expansion of $V_f$, as $\varepsilon\to 0$: 
\begin{itemize}
\item[-] the former given by the asymptotic expansion of $V_f^{\mathrm{c}}$ (from \cite{MRRZ3}),
\item[-] the latter given by the distributional asymptotic expansion of $V_f$ (from \cite{fzf}). 
\end{itemize}
Moreover, in Subsection~\ref{sec:hyp}, we give an example of a model \emph{hyperbolic} orbit for which these two \emph{regularizations} are, unlike in the parabolic case, \emph{essentially} different, in the sense that ``resonant'' oscillatory terms survive in the distributional expansion, while, at the same time, they do not appear in the expansion of $V_f^{\mathrm{c}}(\varepsilon)$.

\medskip

 Theorem~B establishes the existence of the meromorphic extension of the distance zeta function $\zeta_f(s)$ of orbits of \emph{general parabolic germs}, and gives their complex dimensions and polynomial languidity bounds, \emph{without explicitly computing the distance zeta-functions}.
 This is concluded using only the distributional expansion of tube functions $V_f(\varepsilon)$, as $\varepsilon\to 0^+$, given in Theorem~A for general parabolic orbits.
 Theoretically, this approach represents an important advantage, since a direct computation of distance zeta functions of orbits of parabolic germs and the proof of their meromorphic extension is elaborate already in the model cases, as can be seen in Subsection~\ref{subsec:model} and Section~\ref{sec:modelcomp}. 

 It is even more difficult to obtain rational or polynomial languidity bounds necessary to be able to recover the (distributional) asymptotics of the tube function $V_f(\varepsilon)$, as $\varepsilon\to 0^+$, from the complex dimensions of $\mathcal{O}_f(x_0)$, as explained in \cite[\S5.3.2--5.3.3, esp., Thms.\ 5.3.16\ and 5.3.21]{fzf} or \cite{ftf_A}.
 Here, the existence of polynomial languidity bounds follows theoretically from Theorem~B. Therefore, this theoretical importance aside, another way of using Theorem~B is the following. Assume, for instance, that one is able to obtain  a meromorphic extension of the zeta function of one particular parabolic orbit in some way, and from this extension one finds the complex dimensions of the orbit. One then already knows that they will, indeed, be the co-exponents (i.e. dimension of the ambient space minus the exponents) that appear in the (pointwise or distributional) asymptotics of the tube function $V_f(\varepsilon)$, the existence of which is guaranteed by Theorem~A. This conclusion follows without the difficult task of first proving the languidity bounds on the fractal zeta function because they are already guaranteed by Theorem~B.
Indeed, the coefficients appearing in the distributional asymptotics in Theorem~A may be rather difficult to compute directly, but can be obtained more easily from the residues at the corresponding complex dimensions.
 This approach was used in the model case, see Proposition \ref{prop:geo_k} below and its proof in Section \ref{sec:modelcomp}. 
 For more details, see Remark~\ref{rem:zadnji}.
\medskip

By Theorems A and B, the distributional asymptotic expansions of the tube functions and the collection of complex dimensions and corresponding principal parts of orbits of parabolic germs carry the same information, as stated in the following corollary.
\begin{corollary}\label{cor:oneone} For orbits of general parabolic germs from $\mathrm{Diff}(\mathbb R,0)$, there is a one-to-one correspondence between: on one hand, the distributional asymptotics of tube functions of their orbits and, on the other hand, the collection of poles $($the complex dimensions$)$ and their corresponding principal parts of the meromorphic extensions of distance zeta functions of their orbits. 
\end{corollary}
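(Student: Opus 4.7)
The plan is to read off the correspondence directly from the explicit meromorphic formula \eqref{dis.zet.para} of Theorem~B; no new machinery is needed, because \eqref{dis.zet.para} already encodes both directions of the bijection in closed form.

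For the forward direction (distributional asymptotics $\Rightarrow$ complex dimensions and principal parts), I would simply concatenate Theorems~A(3) and B. Theorem~A(3) gives a distributional expansion of $V_f(\varepsilon)$ with coefficients $a_m$, $b_{k+1}(x_0)$, $c_{m,p}(x_0)$, $d_{m,p}(x_0)$; Theorem~B then writes $\zeta_f(s)$, on every right half-plane $W_M$, as a finite sum of principal-part contributions at the poles $\omega_m = 1-\tfrac{m}{k+1}$ plus a holomorphic remainder, with the principal parts expressed as explicit linear functions of those same coefficients. Thus the collection of complex dimensions of $\mathcal{O}_f(x_0)$ is $\{\omega_m : m\in \mathbb{N}\}$ and each principal part is determined in closed form.

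For the reverse direction, I would invert the linear map implicit in \eqref{dis.zet.para}, pole by pole. Decomposing the prefactor as $(1-s) = (1-\omega_m) - (s-\omega_m)$, the principal part at $\omega_m$ takes the form
\begin{equation*}
PP_{\omega_m}\zeta_f(s) \;=\; \sum_{p=0}^{P_m} \frac{\alpha_p^{(m)}(x_0)}{(s-\omega_m)^{p+1}},
\qquad P_m := \lfloor m/k\rfloor + 1,
\end{equation*}
where the $\alpha_p^{(m)}(x_0)$ are upper-triangular linear combinations of the $c_{m,p}(x_0)$ (and similarly $a_m$ or $b_{k+1}(x_0)$, $a_{k+1}$ for the low-$m$ terms), with diagonal entries $(1-\omega_m)(-1)^p p!$. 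Since $\omega_m<1$ for every $m\geq 1$, every diagonal entry is nonzero, so the triangular system is invertible. Consequently, knowledge of all principal parts uniquely determines every coefficient appearing in the expansion (3) of Theorem~A, and Theorem~A(3) then identifies these as precisely the coefficients of the distributional expansion of $V_f(\varepsilon)$.

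The hard part, if one can call it that, is purely notational: keeping track of how the prefactor $(1-s)$ redistributes the $c_{m,p}(x_0)$ across powers of $(s-\omega_m)^{-1}$ and verifying triangularity. Once this is done, the corollary is merely a reinterpretation of \eqref{dis.zet.para}. All substantive analytic content has already been delivered by Theorems~A and B (and, through the latter, by Theorem~C); no additional uniqueness or languidity argument is required.
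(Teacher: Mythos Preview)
Your proposal is correct and follows essentially the same approach as the paper: both derive the corollary directly from the explicit formula \eqref{dis.zet.para} in Theorem~B together with Theorem~A(3). The paper's proof is a one-liner that simply invokes these two results, whereas you make explicit the triangularity/invertibility argument for the reverse direction that the paper takes for granted; this is a helpful clarification but not a different route.
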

\begin{proof}
Both the distributional asymptotic expansions of the tube functions of orbits of parabolic germs and the meromorphic extensions to all of $\mathbb C$ of their distance zeta functions exist by Theorems~A $(3)$ and Theorem~B. The statement of the Corollary follows directly by \eqref{dis.zet.para} in Theorem~B and Theorem~A $(3)$.
\end{proof}
 \bigskip

 Next we state Theorem~C which is used directly in the proof of Theorem~B but is also of independent interest in the theory of complex dimensions. Therefore we state it here in the full generality, for arbitrary bounded subsets of $\mathbb{R}^N$.

\begin{thmC}\label{prop:tym}
Let $A\subseteq\eR^N$ be a bounded set such that, for some $m\in\mathbb N_0$, the $m$-th primitive\footnote{The $m$-th primitive function $V_A^{[m]}$ of $\varepsilon\mapsto V_A(\varepsilon)$ is precisely defined in \eqref{k-th-prim}. In the sequel, we will call $V_A^{[m]}$ the \emph{$m$-th primitive tube function of the set $A$}, as in \cite{fzf}.} of the function $\varepsilon\mapsto V_A(\varepsilon)$, denoted by $V_A^{[m]}(\varepsilon)$, has the following asymptotics, for some $I\in\mathbb N$:
	\begin{equation}\label{log_asym11}
		V_A^{[m]}(\varepsilon)=\sum_{i=1}^{I} \varepsilon^{\alpha_i+m}\cdot M_i\cdot P_i(-\log\varepsilon)+O(\varepsilon^{\alpha_I+\gamma+m}),
	\end{equation}
	as $\varepsilon\to0^+$, for some $0<\alpha_1< \alpha_2<\ldots<\alpha_I$, $\gamma>0$, and some constants $M_i\in\mathbb R$. Here, $P_i$ denote arbitrary monic\footnote{the coefficient of the leading monomial equal to $1$} polynomials with real coefficients and we let $n_i\in\eN_0$ denote their respective degrees.

Then, the distance zeta function ${\zeta}_A(s)$ of the set $A$ is meromorphic in the right open half-plane $\{\re s>N-\alpha_I-\gamma\}$ and given by:
	\begin{equation}\label{distance_zeta_polovi1}
		{\zeta}_A(s)=(N-s)_{m+1}\sum_{i=1}^{I} \Big(\delta^{s-N+\alpha_i}\sum_{j=0}^{n_i}\frac{M_i\cdot P_i^{(j)}(-\log\delta)}{(s-N+\alpha_i)^{j+1}}\Big)+R(s),
	\end{equation}
	where $R$ is holomorphic in $\{\re s>N- \alpha_I-\gamma\}$ and given by
	\begin{equation*}
		\label{R(s)}
		R(s)=M\sum_{n=0}^m(N-s)_{n}\delta^{s-N-n}V_{A}^{[n]}(\delta)+\int_{0}^{\delta}t^{s-1}O(t^{-\alpha_I-\gamma})\di t,
	\end{equation*}
and $M:=\sum_{i=1}^I M_i$.

Finally, the distance zeta function ${\zeta}_A(s)$ is languid in the polynomial sense for any screen given as a vertical line $\{\re s=\beta\}$ where $\beta >N- \alpha_I-\gamma$ and the corresponding window $\{\re s>\beta\}$. 
\end{thmC}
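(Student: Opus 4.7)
The plan is to connect $\zeta_A(s)$ to the asymptotic expansion of $V_A^{[m]}$ through $m+1$ iterated integrations by parts, and then to extract the explicit meromorphic form \eqref{distance_zeta_polovi1} via a Mellin-type identity for $\int_0^\delta t^{a-1}\,P(-\log t)\,\di t$. This strategy extends the argument used for \cite[Theorem 2.3.18]{fzf} (which corresponds to the case $m=0$) to the higher-primitive setting, where a rising Pochhammer prefactor $(N-s)_{m+1}$ must be tracked.

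The first step is to establish the identity
\begin{equation*}
\zeta_A(s)=\sum_{n=0}^{m}(N-s)_n\,\delta^{s-N-n}\,V_A^{[n]}(\delta)+(N-s)_{m+1}\int_0^\delta t^{s-N-m-1}\,V_A^{[m]}(t)\,\di t,
\end{equation*}
with $(N-s)_n:=(N-s)(N-s+1)\cdots(N-s+n-1)$. Starting from the layer-cake representation $\zeta_A(s)=\int_0^\delta t^{s-N}\,\di V_A(t)$, valid initially for $\re s>\ov{\dim}_B A$ (see \cite[\S2.1]{fzf}), one performs Stieltjes integration by parts once against $V_A$ and then successively against the absolutely continuous primitives $V_A^{[1]},\ldots,V_A^{[m]}$; the rising Pochhammer factor accumulates the constants $(N-s),(N-s+1),\ldots$ produced at each step. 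All boundary terms at $t=0$ vanish because the hypothesis forces $V_A^{[m]}(t)=O(t^{\alpha_1+m})$ with $\alpha_1>0$, and hence $V_A^{[n]}(t)=O(t^{\alpha_1+n})$ for every $n\le m$.

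The second step is to substitute \eqref{log_asym11} into the remaining integral and to apply the Mellin identity
\begin{equation*}
\int_0^\delta t^{a-1}\,P(-\log t)\,\di t=\delta^a\sum_{j=0}^{\deg P}\frac{P^{(j)}(-\log\delta)}{a^{j+1}},
\end{equation*}
which is proved by induction on $\deg P$ via integration by parts, with base case $\int_0^\delta t^{a-1}\,\di t=\delta^a/a$. Applied with $a=s-N+\alpha_i$ and $P=P_{n_i}$, it converts each leading term of \eqref{log_asym11} into exactly the principal-part summand of \eqref{distance_zeta_polovi1} at $s=N-\alpha_i$, of order at most $n_i+1$ and with the stated derivative-of-polynomial structure. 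The remainder $O(t^{\alpha_I+\gamma+m})$ integrates against $t^{s-N-m-1}$ to a function holomorphic on $\{\re s>N-\alpha_I-\gamma\}$ by absolute convergence; this piece, together with the boundary terms from the first step, supplies $R(s)$.

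For polynomial languidity on a vertical line $\{\re s=\beta\}$ with $\beta>N-\alpha_I-\gamma$, I would bound each ingredient of \eqref{distance_zeta_polovi1} separately: $(N-s)_{m+1}$ grows at most like $|s|^{m+1}$; each quotient $\delta^{s-N+\alpha_i}/(s-N+\alpha_i)^{j+1}$ has modulus $\delta^{\beta-N+\alpha_i}\,|s-N+\alpha_i|^{-(j+1)}$, which is in fact decaying in $|\im s|$; and both pieces of $R(s)$ are polynomially bounded on the line (the boundary terms trivially, the integrated remainder by absolute convergence uniform in $\im s$). Multiplying yields the required polynomial bound in the sense of \cite[Defs.\ 5.1.3 and 5.3.9]{fzf}. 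The main obstacle I foresee is the bookkeeping of the iterated Stieltjes integration by parts when $V_A$ is only locally integrable: one must check both that the Pochhammer cascade arises cleanly and that every $t=0$ boundary contribution truly vanishes. The cleanest route is to argue inductively on $n$, working directly with the primitives $V_A^{[n]}$ (automatically absolutely continuous of any order by construction) and transferring the vanishing at $t=0$ via the inherited estimate $V_A^{[n]}(t)=O(t^{\alpha_1+n})$.
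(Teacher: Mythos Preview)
Your proposal is correct and follows essentially the same route as the paper: iterated integration by parts to obtain the Pochhammer prefactor (the paper packages this as Lemma~\ref{lemma_tilde} for the tube zeta function $\tilde\zeta_A$ and then applies the functional equation \eqref{tilde_dist}, whereas you work directly with $\zeta_A$ via the layer-cake/Stieltjes form---but this is the same computation), followed by the Mellin-type identity \eqref{eq_poln+1} for $\int_0^\delta t^{a-1}P(-\log t)\,\di t$, and finally the term-by-term languidity estimate. The only cosmetic difference is that the paper first proves the one-term case (Proposition~\ref{distr_asy_tube_zeta} and Corollary~\ref{distr_asy_dist_zeta}) and then declares the multi-term version a ``direct generalization'', while you handle all $I$ terms at once.
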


\noindent Here,  for $x\in\mathbb C$ and $m\in\mathbb N$,
\begin{equation*}
	(x)_m:=x(x+1)(x+2)\cdots (x+m-1),
\end{equation*}
denotes the \emph{rising factorial}, with the convention $(x)_0:=1$.
Note that $(x)_m=\frac{\Gamma(x+m)}{\Gamma(x)}$ and is also known as the \emph{Pochhammer symbol}.
\smallskip

Note that Theorem~C is stated for standard distance zeta functions of sets defined in \eqref{eq:dz}, not for their cut-off counterparts \eqref{zeta_f} that we use in this paper for convenience for orbits of parabolic germs.
\smallskip 

The choice of $\delta>0$ in Theorem~C is non-essential, in the sense that changing $\delta>0$ amounts to adding an entire function.
This is not immediately clear from \eqref{distance_zeta_polovi1}, but is a theoretical fact stemming from the same ambiguity in the definition of the distance zeta function \eqref{eq:dz}, which is clear from the proof of Theorem~C in Section~\ref{subsec:proofC}, in particular from the proof of Lemma~\ref{lemma_tilde}. For instance, we may let $\delta=1$ to get a nicer formula \eqref{distance_zeta_polovi1}. However, the change in $\delta>0$ can, in general, affect the languidity of $\zeta_A$ so we state the theorem here in full generality. In some applications, one may have to choose $\delta>0$ differently in order to obtain better languidity conditions. 
\smallskip

If  \eqref{log_asym11} holds with $m=0$ and $n_1=0$, then the constant $M_1$ is the Minkowski content of $A$ and $N-\alpha_1$ is its box dimension, $\dim_BA$.
In general, the $(n_1+1)$-th order pole $N-\alpha_1$ of ${\zeta}_A$ in \eqref{distance_zeta_polovi1}, which is the rightmost pole of $\zeta_A$, equals to the upper Minkowski (or box) dimension of $A$, i.e., $\overline{\dim}_B A=N-\alpha_1$. Indeed, the upper box dimension of a set $A$ is equal to the abscissa of convergence of $\zeta_A$; see \cite[Theorem 2.1.11]{fzf}.
See also Remark \ref{Tauber} in which we briefly discuss the generalized gauge Minkowski content in connection to Theorem~C.

\smallskip
Note that the assumption \eqref{log_asym11} in Theorem~C, by Proposition~\ref{lem:integrali} in Section~\ref{sec:proofA}, implies the following weaker distributional asymptotics of $\varepsilon\mapsto V_A(\varepsilon)$:
\begin{equation*}\label{eq:joh}
V_A(\varepsilon)=_{\mathcal D} \sum_{i=1}^{I}\varepsilon^{\alpha_i}Q_i(-\log\varepsilon)+O(\varepsilon^{\alpha_I+\gamma}),\ \varepsilon\to 0^+,
\end{equation*}
where $Q_i$ denote some polynomials of degree $n_i$, $i=1,\ldots,I$, which can be explicitly expressed by $\alpha_i,\, M_i,\, P_i$ from \eqref{log_asym11}, $i=1,\ldots,I$. This follows from the fact that $\frac{d^m}{d\varepsilon^m}\big(\varepsilon^{\alpha_i+m}\cdot M_i\cdot P_i(-\log\varepsilon)\big)=\varepsilon^{\alpha_i}Q_i(-\log\varepsilon)$. Note that pointwise asymptotics of the above type is not implied by \eqref{log_asym11}.


%
%
%

\subsection{Model parabolic germs}\label{subsec:model}\

In this subsection we present a model case in which the distance zeta function can be computed explicitly.
We consider the \emph{model parabolic diffeomorphsims} 
 $$f_0(x)=\mathrm{Exp}\big(-x^{k+1}\frac{d}{dx}\big).\mathrm{id},$$
whose formal invariants are: \emph{multiplicity} $k\in\mathbb N$ and the \emph{residual invariant} $\rho$ equal to $\rho=0$.
In this model case we are able to directly compute the distance zeta function $\zeta_{f_0}(s)$, as well as determine the complex dimensions, of its orbit. 
These cases are simple since we can use the theory of \emph{shifted $a$-strings} for meromorphic extensions of distance zeta functions of their orbits to all of $\mathbb C$. Additionally, since $\rho=0$, by proofs in Section~\ref{sec:proofA}, there are no logarithmic terms in the asymptotic expansion of the continuous time tube function $\varepsilon\mapsto V_f^{\mathrm{c}}(\varepsilon)$ nor in the distributional asymptotic expansion of $\varepsilon\mapsto V_f(\varepsilon)$, as $\varepsilon\to 0^+$, in Theorem~A. Only power terms of the type $\varepsilon^{\frac{i}{k+1}}$, $i\in\mathbb N$, are present.
\smallskip

Let $\zeta(s,q)$, $q>0$, denote the \emph{Hurwitz zeta function}:
\begin{equation}\label{eq:hur}
\zeta(s,q):=\sum_{j=0}^{\infty}\frac{1}{(j+q)^s},\ \re s>1.
\end{equation}        
It admits an analytic extension to $\mathbb C\setminus\{1\}$. At $s=1$, it has a simple pole, with $\mathrm{Res}\big(\zeta(s,q),1\big)=1$. Moreover, $\zeta(0,q)=\frac12-q$. See e.g. \cite{apostol}.

\begin{proposition}[Meromorphic extension of distance zeta functions for model orbits, $k\geq 1$]\label{prop:geo_k}
Let $\zeta_{f_0}(s)$, \footnote{Note that $\dim_B\mathcal O_{f_0}(x_0)=\frac{k}{k+1}$, see e.g. \cite{NeVeDa}. Therefore, $\zeta_{f_0}(s)$ given by formula \eqref{zeta_f} is well-defined and analytic for $\mathrm{Re}(s)>\frac{k}{k+1}$.}$\mathrm{Re}(s)>\frac{k}{k+1}$, denote the distance zeta function of an orbit $\mathcal O_{f_0}(x_0)$ of a model parabolic germ $f_0$ belonging to the formal class $(k,\rho=0)$, $k\in\mathbb N$, as defined by \eqref{zeta_f}.
\begin{enumerate}
\item
The distance zeta function $\zeta_{f_0}(s)$ can be meromorphically extended to all of $\Ce$. The poles of $\zeta_{f_0}(s)$ are located at $\frac{k}{k+1}$ and at $($a subset of$\,)$ the set of points $\frac{-mk}{k+1}$, $m\in\eN_0$, and they are all simple. In particular, the Minkowski $($box$)$ dimension of $\mathcal O_{f_0}(x_0)$ is $D=\frac{k}{k+1}$, and this is the only pole of $\zeta_{f_0}(s)$ with a positive real part.
\\
\noindent More precisely, for $M\in\eN_0$, $\zeta_{f_0}(s)$ can be expressed as a sum of Hurwitz zeta functions in the following sense:
\begin{equation*}\label{zetaksan}
	\zeta_{f_0}(s)=\frac{2^{1-s}}{s}\sum_{n=0}^{M}\tilde{Z}_k(s,n)\zeta\left(\frac{k+1}{k}s+n,\frac{1}{kx_0^k}\right)+R(s),
\end{equation*}
where $R(s)$ is defined and holomorphic for $\re s>-\frac{Mk}{k+1}$. Moreover, for every $\gamma>0$, $R(s)=O(|s|^{M})$, as $|s|\to+\infty$ in $\{s\in\mathbb C:\mathrm{Re}(s)\geq -\frac{Mk}{k+1}+\gamma\}$.
The functions $\tilde{Z}_k(s,n)$, $n=0,\ldots, M,$ are entire and given by:
\begin{equation}\label{eq:kaa}
	\tilde{Z}_k(s,n):=k^{-\frac{k+1}{k}s}\sum_{m=0}^{n}(-k)^m{s\choose m}\sum_{i=0}^m(-1)^i{m\choose i}\sum_{j=0}^i(-k)^{-j}{i\choose j}{\frac{i-m}{k}\choose n+m-j}.
\end{equation}
In particular, $\tilde{Z}_k(s,0)=k^{-\frac{k+1}{k}s}$.
\smallskip

\item For any {\em screen} chosen as a vertical line $\{\re s=\sigma\}$ with fixed abscissa $\sigma\in\big(-\frac{Mk}{k+1},-\frac{(M-1)k}{k+1}\big]$, $\zeta_{f_0}(s)$ is super languid\footnote{For the precise definition of the \emph{screen} and the \emph{super languidity}, see Subsection~\ref{sec:results} below.} with exponent $\kappa_{f_0}=M$.
\end{enumerate}
\end{proposition}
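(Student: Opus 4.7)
The strategy is to exploit that $f_0$ is the time-one map of the explicitly integrable vector field $-x^{k+1}\frac{d}{dx}$, so the iterates admit a closed form and $\zeta_{f_0}$ reduces to a single series that decomposes into Hurwitz zeta functions via the generalized binomial theorem. Solving $\dot{x}=-x^{k+1}$ gives $x_j := f_0^{\circ j}(x_0) = u_j^{-1/k}$ with $u_j = k(j+q)$, $q = \frac{1}{kx_0^k}$. The iterates strictly decrease to $0$, so the orbit partitions $(0,x_0]$ into intervals $[x_{j+1},x_j]$. On each such interval the distance to $\mathcal{O}_{f_0}(x_0)$ equals $\min(x-x_{j+1},\, x_j-x)$, and a direct integration yields
\[
\int_{x_{j+1}}^{x_j} d(x,\mathcal{O}_{f_0}(x_0))^{s-1}\,dx \;=\; \frac{2^{1-s}}{s}\,\ell_j^s, \qquad \ell_j:=x_j-x_{j+1}.
\]
Since $\ell_j \sim k\,u_j^{-(k+1)/k}$, the series $\sum_j \ell_j^s$ converges absolutely for $\re s > k/(k+1)$, and summing over all intervals gives $\zeta_{f_0}(s) = \frac{2^{1-s}}{s}\sum_{j\geq 0}\ell_j^s$.

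Next, I would expand $\ell_j^s$ in descending powers of $u_j$ by applying the generalized binomial theorem twice: first to $(1+k/u_j)^{-1/k}$, producing $\ell_j = u_j^{-(k+1)/k}\bigl(1+\sum_{i\geq 1}c_i u_j^{-i}\bigr)$ with coefficients $c_i$ depending only on $k$, and then to the $s$-th power, yielding
\[
\ell_j^s \;=\; \sum_{n\geq 0} A_n(s)\, u_j^{-(k+1)s/k - n},
\]
where $A_n(s)$ is a polynomial in $s$ of degree at most $n$. Term-by-term summation using $\sum_{j\geq 0} u_j^{-a} = k^{-a}\zeta(a,q)$ then produces the representation in (1), with $\tilde Z_k(s,n) = A_n(s)\, k^{-(k+1)s/k - n}$; the explicit closed form \eqref{eq:kaa} is a direct combinatorial transcription of the two nested binomial expansions. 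Truncating at $n=M$ leaves a remainder $R(s)$ which converges absolutely and locally uniformly on $\{\re s > -Mk/(k+1)\}$ by the tail estimate $|\ell_j^s - \sum_{n\leq M}(\cdots)| = O\bigl(u_j^{-(k+1)\re s/k - M - 1}\bigr)$, and Stirling-type bounds on $\binom{s}{n}$ give $R(s)=O(|s|^M)$ uniformly on slightly shifted half-planes.

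For the pole structure in (1), each Hurwitz factor $\zeta\bigl((k+1)s/k+n,\, q\bigr)$ has a unique simple pole at $(k+1)s/k+n=1$, i.e., at $s=k(1-n)/(k+1)$, producing $\tfrac{k}{k+1}$ for $n=0$ and $-\tfrac{mk}{k+1}$ for $n=m+1$, $m\geq 0$. The outer factor $1/s$ threatens a double pole at $s=0$, but $(1+y)^s = 1+O(s)$ forces every $A_n(s)$ with $n\geq 1$ to vanish at $s=0$; hence $\tilde Z_k(s,1)$ cancels the pole of the Hurwitz factor at $s=0$, leaving only the simple pole contributed by $1/s$. All remaining poles are simple and located as claimed, with $\tfrac{k}{k+1}$ the unique pole in $\{\re s>0\}$. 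The super languidity in (2) with exponent $\kappa_{f_0}=M$ then follows by combining the standard polynomial growth of $\zeta(\cdot,q)$ on vertical strips with the bounds $\deg_s \tilde Z_k(s,n)\leq n\leq M$ and $R(s)=O(|s|^M)$. I expect the main technical obstacle to be the explicit combinatorial identification of $\tilde Z_k(s,n)$ in the closed form \eqref{eq:kaa}: unraveling the two nested binomial expansions and reindexing the resulting triple sum to match the stated formula is careful bookkeeping rather than a conceptual step.
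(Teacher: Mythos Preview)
Your proposal is correct and follows essentially the same route as the paper: the paper recognizes $\mathcal{L}_{f_0}$ as a scaled \emph{shifted $\tfrac{1}{k}$-string} and invokes a general result for such strings (Theorem~\ref{shifted_a_prop}), whose proof is precisely the factoring-out of the leading power, the binomial expansion of $(1+h_j)^s$, and the termwise summation to Hurwitz zeta functions that you describe. The one place where the paper adds substance beyond your sketch is exactly the combinatorial step you flag as the main obstacle: the closed form~\eqref{eq:kaa} for $\tilde Z_k(s,n)$ is obtained not by a direct reindexing of the nested binomial sums but via the generalized Chu--Vandermonde identity combined with an inclusion-exclusion argument (Lemma~\ref{lem:chu}).
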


\noindent The proof of Proposition~\ref{prop:geo_k} is in Subsection~\ref{subsec:astring}.
\medskip

As a special case, for $k=1$, we obtain the following formal expression for the distance zeta function $\zeta_{f_0}$:
\begin{equation}\label{zeta}
\zeta_{f_0}(s)\sim \frac{2^{1-s}}{s}\cdot \sum_{r=0}^{\infty}{-s \choose r}\zeta(2s+r,x_0^{-1}),
\end{equation}
from which we can see the poles of $\zeta_{f_0}(s)$, i.e., the complex dimensions of the orbit $\mathcal O_{f_0}(x_0)$.
Indeed, the only pole of the Hurwitz zeta function is at $s=1$, with residue equal to $1$. Therefore, $\zeta_{f_0}$ has possible simple poles at $w_r=\frac{1}{2}(1-r)$, $r\in\mathbb N_0$. It can be shown that the integer \emph{poles} $w_r$ (except $w_1=0$) have zero residues, since, in these cases, the binomial coefficient ${-w_r \choose r}$ is zero, and we have a zero-pole cancellation in \eqref{zeta}. 

\noindent Therefore, $\zeta_{f_0}$ has simple poles at $\omega_m:=\frac{1}{2}-m$, $m\in\mathbb N_0$, and at $\omega_{1/2}:=0$, with non-zero residues:\footnote{Note also that for $r>0$ the pole at zero is canceled by the factor ${-s\choose r}$ so that the only component of the residue comes from $2\cdot\zeta(0,x_0^{-1})=1-2x_0^{-1}$.}
$$
\mathrm{Res}(\zeta_{f_0},\omega_m)={m-\frac{1}{2}\choose 2m}\frac{2^{1/2+m}}{1-2m},\ m\in\mathbb N_0,\ \text{ and }\mathrm{Res}(\zeta_{f_0},0)=1-2x_0^{-1}.
$$ 
This also shows that the set of complex dimensions is exactly 
$$\dim_{\mathbb{C}}\left(\mathcal O_{f_0}(x_0)\right)=\{\omega_{1/2}\}\cup \{\omega_m:m\in\mathbb N\}.$$

In the case $k=1$, we also show directly, that is, without relying on Theorem~B, that $\zeta_{f_0}$ satisfies appropriate polynomial languidity estimates. 
These estimates are even negative power estimates (so-called \emph{rational}) if we restrict the domain of $\zeta_{f_0}$ to a closed vertical strip contained in $\{\re s>0\}$; see Proposition~\ref{geo_k1} and Corollary~\ref{dist_zeta_1}. 

\noindent For detailed computations in the case $k=1$, see Subsection~\ref{subsec:dva}.


 
\medskip

\section{Auxiliary definitions}\label{sec:results}
Let $f\in\mathrm{Diff}(\mathbb R,0)$ be an attracting germ tangent to the identity, $$f(x)=x-ax^{k+1}+o(x^{k+1}),\ a>0,$$
and let $\mathcal{O}_f(x_0)$, $x_0>0$, be an orbit in the basin of attraction. 

\medskip

We denote the \emph{fractal string} generated by the orbit $\mathcal{O}_f(x_0)$ by:
\begin{equation}\label{eq:fras}\mathcal L_{f}:=\mathcal L_{f,x_0}:=\{\ell_j:\,j\in\mathbb N\}.\end{equation}
Here, $\ell_j:=f^{\circ (j-1)}(x_0)-f^{\circ j}(x_0),\ j\in\mathbb N,$ are consecutive distances between points of the orbit. We will use the abbreviated notation $\mathcal L_f$ whenever there is no ambiguity.
By $\zeta_{\mathcal{L}_f}$ we denote the \emph{geometric zeta function} of the fractal string $\mathcal L_f$:
\begin{equation}\label{eq:zet}
\zeta_{\mathcal{L}_f}(s):=\sum_{j\in\mathbb N}\ell_j^s,\ \re s>\frac{k}{k+1}.
\end{equation}
Indeed, $f^{\circ j}(x_0)\sim j^{-\frac{1}{k}}$, $\ell_j\sim j^{-\frac{1}{k}-1}$, as $j\to\infty$, so the series \eqref{eq:zet} converges for $\re s>\frac{k}{k+1}$. For the precise definition of a fractal string and its geometric zeta function, see e.g. \cite{lapidusfrank,lapidusfrank12}.
\smallskip

We reiterate here more precisely the definition \eqref{zeta_f} of the distance zeta function of a an orbit $\mathcal{O}_f(x_0)$.
\begin{definition}[Distance zeta function of an orbit $\mathcal{O}_f(x_0)$]\label{distance_zeta_germ}
The \emph{distance zeta function} of the orbit $\mathcal{O}_f(x_0)$ is defined by:
\begin{equation}\label{eq:dist}
\zeta_f(s):=\int_0^{x_0} d(x,\mathcal{O}_f(x_0))^{s-1}\,dx,\ \re s>\dim_B\mathcal{O}_f(x_0).
\end{equation}
\end{definition}
It was proven in \cite{dtzf,fzf} that the integral \eqref{eq:dist} converges absolutely for $\re s>\dim_B\mathcal{O}_f(x_0)$, which is by e.g. \cite{NeVeDa} known to be equal to $\frac{k}{k+1}$ in the present parabolic case. Hence, it defines an analytic function on the open half-plane $\{\re s>\frac{k}{k+1}\}$. 

Definition \eqref{eq:dist} arises from \eqref{eq:dz} for $N=1$, where we let $A:=\mathcal{O}_f(x_0)\subseteq\mathbb{R}$. Moreover, we  integrate over the segment $[0,x_0]$ instead of over $A_\delta$, as was the case in \eqref{eq:dz}.
The reason behind it is that $\delta>0$ in \eqref{eq:dz} can be chosen arbitrarily. Therefore, we first choose $\delta$ large enough so that $[0,x_0]\subseteq A_{\delta}=(-\delta,x_0+\delta)$ (e.g. $\delta:=1$, since $x_0<1$). Then we discard the left- and right- end intervals of $A_{\delta}$, i.e, we discard $(-\delta,0)\cup(x_0,x_0+\delta)$, since these parts, on one hand, do not give any interesting fractal information about the set $A$ and, on the other hand, we obtain much nicer formulas and a more straightforward connection with the geometric zeta function $\zeta_{\mathcal{L}_f}$.\footnote{Note that the definition of $\zeta_f$ can be also explained in the more general context of {\em relative fractal drums} or in short {\em RFDs} and it is in fact the distance zeta function of the relative fractal drum $(\mathcal{O}_f(x_0),[0,x_0])$ in the terminology of \cite[Chapter 4]{fzf}; see also \cite[\S5.5.3 and Proposition 5.5.4]{fzf}.} It is straightforward to check the following functional equation connecting definition \eqref{eq:dist} to the standard definition of the distance zeta function $\zeta_{\mathcal O_f (x_0)}$ of $\mathcal O_f(x_0)$:
\begin{equation}\label{eqdelta}                                        
	\zeta_{\mathcal{O}_f(x_0)}(s)-\zeta_f(s)=\frac{2\delta^s}{s},\ \re s>\dim_B\mathcal{O}_f(x_0),
\end{equation}
whenever $\delta\geq (x_0-f(x_0))/2$.

\medskip 
By \cite[Example 2.1.58 and Corollary 2.1.61]{fzf}, $\zeta_{\mathcal{L}_f}$ and $\zeta_f$ are related by a simple functional equation which the reader can also check easily by direct integration:
\begin{equation}\label{func_dist_geo}
\zeta_f(s)=\frac{2^{1-s}}{s}\zeta_{\mathcal{L}_f}(s),\  \re s>\dim_B\mathcal{O}_f(x_0).
\end{equation}

For more on fractal zeta functions of compact sets and relative fractal drums, see e.g.\ \cite{lapidusfrank12,fzf}. 
 \smallskip

\begin{definition}[Complex dimensions]\label{def_dim_c}
Let $A\subset\mathbb R^N$ be bounded and assume that its distance zeta function $\zeta_{A}(s)$ has a meromorphic extension to all of $\Ce$.
Then the set of poles of $\zeta_{A}$ is called the {\em set of complex dimensions} of $A$ and denoted by $\dim_{\mathbb{C}}A$.
\end{definition}
\noindent If one cannot extend $\zeta_{A}(s)$ to all of $\Ce$ or wishes to work only with a subset of all of the complex dimensions of $A$, one uses the following terminology.

\begin{definition}[The window, the screen and the visible complex dimensions]\label{def_dim_w}
Let $A\subset\mathbb R^N$ be bounded and assume that $\zeta_{A}(s)$ has a meromorphic extension to the open right half-plane $W\subseteq \mathbb{C}$, which contains the half-plane of absolute convergence of $\zeta_A$.
Then the set of {\em visible complex dimensions} is defined as the set of poles of $\zeta_A$ contained in $W$ and is denoted by $\mathcal{P}(\zeta_A,W)$.

The open right half-plane $W$ is called the {\em window} and its left edge consisting of a vertical line is called the {\em screen}.\footnote{The window and the screen are usually defined in a more general way, but the above definition will suffice for our needs here; for more details see \cite[\S2.1.5]{fzf} or\cite{lapidusfrank12}.}
\end{definition}
By \cite{fzf}, under mild hypotheses on the set $A\subseteq \mathbb R^N$, the unique complex dimension with maximal real part is equal to $\dim_B A$, and its residue is equal to the Minkowski content of $A$, $\mathcal M(A)$, modulo a multiplicative constant.

\begin{remark}
In our context, Definitions \ref{def_dim_c} and \ref{def_dim_w} have their exact analogues if one replaces $\zeta_{\mathcal{O}_f(x_0)}$ by $\zeta_f$, since, by the functional equation \eqref{eqdelta}, the only changes that occur are in the pole at zero.
Furthermore, an analog comment applies to replacing $\zeta_f$ by $\zeta_{\mathcal{L}_f}$, in which case the order of the pole at zero changes and an explicit change occurs at the principal parts of other poles, but not in their orders, as can be seen from the functional equation \eqref{func_dist_geo}. Therefore, throughout the paper, we will use the notion of complex dimensions, the screen and the window in terms of $\zeta_f$.
\end{remark}
\smallskip

Next, we introduce the technical growth properties of the distance zeta function which are needed to reconstruct the pointwise or distributional asymptotics (introduced below at the end of this section) of the tube function $V_f(\varepsilon)$ directly from the complex dimensions of the orbit $\mathcal{O}_f(x_0)$. 
These are called {\em languidity} conditions in \cite{lapidusfrank12,fzf} (see especially \cite[Definitions 5.1.3 and 5.3.9]{fzf}).
In order to keep the presentation clear, here we impose stronger, but less technical conditions, called \emph{super languidity}, which will suffice for our needs.

\begin{definition}[Super languidity]\label{sup-languidity}
Let $\zeta_f(s)$ be the distance zeta function given by \eqref{eq:dist}. Let $W\subseteq\mathbb{C}$ be a window, with the corresponding screen $S=\partial W=\{\re s=\alpha\}$, for some $\alpha\in\mathbb{R}$ such that $\alpha<\dim_B\mathcal{O}_f(x_0)$. 

We say that $\zeta_f$ is {\em super languid} with the {\em super languidity exponent $\kappa_f\in\mathbb{R}$ for the screen} $S$ (or: {\em in the closed strip} $\{\alpha\leq \re s\leq\beta\}$) if there exists $\beta\in\mathbb{R}$, $\beta>\dim_B\mathcal{O}_f(x_0)$, such that
\begin{equation}
	|\zeta_f(s)|=O(|\im s|^{\kappa_f}),\quad |s|\to+\infty,
\end{equation}
uniformly in the closed strip $\{\alpha\leq \re s\leq\beta\}$.\footnote{It is straightforward to check that super languidity implies languidity in the sense of \cite[Definitions 5.1.3 and 5.3.9]{fzf}. Note also that in \cite{fzf}, for the distance zeta function, the notion is actually called \emph{$d$-languidity}, in order to distinguish when one refers to the languidity of the tube zeta function $\tilde{\zeta}_A$ as opposed to the languidity of the distance zeta function $\zeta_A$.}
\end{definition}
In other words, super languidity guaranties, at most, uniform polynomial growth in vertical strips (the case when $\kappa_f\geq 0$) or uniform rational decay (when $\kappa_f<0$), as the imaginary part grows by absolute value to $+\infty$.
The theory of \cite[Chapter 5]{fzf} then gives a way to reconstruct the pointwise asymptotics, in case of rational decay, or the distributional asymptotics, in case of polynomial growth, of the tube function $V_f(\varepsilon)$, as $\varepsilon\to0^+$.

In the above definition, the super languidity exponent $\kappa_f$ depends on the screen, i.e., on the value of $\alpha$ and on the value of $\beta$ but the dependence on $\beta$ is not so important because it can be controlled more easily.\footnote{Compare with \cite[Definitions 5.1.3 and 5.3.9]{fzf} where $\beta$ is refered to as the constant $c$.}
On the other hand, the dependence on $\alpha$ is crucial and usually not easy to obtain.
 
More specifically, $\kappa_f$ grows larger the more we push the screen to the left, i.e., as the value of $\dim_B\mathcal{O}_f(x_0)-\alpha$ becomes larger.
Furthermore, if one denotes the critical value of the screen, when the languidity exponent becomes zero, by $\{\mathrm{Re}(s)=\alpha_0\}$.
Then, all of the complex dimensions of $\mathcal{O}_f(x_0)$ that have real part strictly greater than $\alpha_0$ generate pointwise terms in the asympototics of $V_f(\varepsilon)$, as $\varepsilon\to 0^+$. The smaller the real part of a particular complex dimension, the higher is the asymptotic order of the term that the particular complex dimension generates in the asymptotics.
Furthermore, if we can find a screen $\{\mathrm{Re}(s)=\alpha_1\}$, $\alpha_1<\alpha_0$ (i.e., we push the left edge from $\alpha_0$ to $\alpha_1$), for which $\zeta_f$ is languid with $\kappa_f\geq 0$, then all of the complex dimensions of $\mathcal{O}_f(x_0)$ that have real part strictly grater than $\alpha_1$ and smaller than or equal to $\alpha_0$ will, in general, generate terms in the asymptotics of $V_f(\varepsilon)$, as $\varepsilon\to 0^+$, only \emph{in the distributional sense}. For the definition of the distributional asymptotics, see the end of the section. As we will show in the case of model parabolic germs, by combination of Proposition~\ref{prop:geo_k} and Theorem~A, some of distributional terms may be actually pointwise, see Remark~\ref{obs:rema} in Section~\ref{sec:cont}. This fact cannot be deduced directly from the theory of \cite{fzf}, but is shown here by exact computation of the pointwise and distributional asymptotics of $V_f(\varepsilon)$ in Theorem~A.

\begin{remark}\label{sup-lang-ex}
	The above definition extends directly to general distance zeta functions of bounded sets or even to general meromorphic functions defined on closed vertical strips.
	Moreover, we will often use the following facts that can be easily checked. Namely, if two meromorphic functions $\zeta_{1}$ and $\zeta_{2}$ are super languid on the same closed vertical strip $H\subset\Ce$ with exponents $\kappa_1$ and $\kappa_2$,
	then the function $\zeta_{1}\cdot\zeta_{2}$ is also super languid on the same strip $H$ with exponent $\kappa:=\kappa_1+\kappa_2$.
	Furthermore, if $a,b\in\Ce$, then the linear combination $a\zeta_1+b\zeta_2$ is super languid on the same strip $H$ with exponent $\kappa:=\max\{\kappa_1,\kappa_2\}$.
\end{remark}

Finally, let us recall the standard notion of the \emph{distributional asymptotics}. Let \begin{align*}\mathcal S(0,\delta)=\{\varphi\in C^\infty(0,\delta):\ &t^m \varphi^{(q)}(t)\to 0,\ t\to 0^+,\\
 &(t-\delta)^m\varphi^{(q)}(t)\to 0,\ t\to \delta^-,\ m\in\mathbb Z,\ q\in\mathbb N\}\end{align*} be the space of \emph{Schwartz test-functions} on the interval $(0,\delta)$.\footnote{Every derivative has faster than power decay at interval endpoints.} Its dual space of linear continuous functionals on $\mathcal S(0,\delta)$ is called the space of \emph{Schwartz distributions} and denoted by $\mathcal S'(0,\delta)$. Continuous functions $f\in C([0,\delta])$ may be considered as belonging to the class $\mathcal S'(0,\delta)$, by identifying $f$ with the functional $T_f$ given by $T_f(\varphi)=\left\langle T_f,\varphi\right\rangle:=\int_0^\delta f(t) \varphi(t)\,dt$, see \cite{Schw}.

Let $\varphi\in\mathcal S(0,\delta)$. We extend it by zero on the whole interval $(0,+\infty)$ and put $\varphi_a(t):=\frac{1}{a}\varphi(\frac{t}{a}),\ t\in(0,\delta)$. Then $\varphi_a\in\mathcal S(0,\delta)$, $0<a<1$. We say that $f\in C([0,\delta])$ is \emph{of asymptotic order $f(x)=O(x^\beta)$, $\beta\in\mathbb R$, as $x\to 0^+$, in the sense of Schwartz distributions}, if $$\langle T_f,\varphi_a\rangle =O(a^\beta),\ a\to 0^+,$$ for every Schwartz function $\varphi\in \mathcal S(0,\delta)$ extended by zero on $(0,+\infty)$, see \cite{lapidusfrank,fzf}.
By $\sim_{\mathcal D}$, we denote the asymptotic expansion \emph{in the sense of Schwartz distributions}.
It is straightforward to show that classical asymptotics of a continuous function induce distributional asymptotics of the corresponding regular distribution, but the converse is not true.

\section{Proof of Proposition~\ref{prop:geo_k}}\label{sec:modelcomp}
First, in Subsection~\ref{subsec:dva}, we prove only the special case $k=1$ of Proposition~\ref{prop:geo_k}, while in Subsection~\ref{subsec:astring}, we prove Proposition~\ref{prop:geo_k} in full generality, i.e., for $k\geq 1$.
The main idea is using the \emph{shifted $a$-string method}. The case $k=1$ is indeed just a special case, however, we first describe and prove it in Subsection~\ref{subsec:dva} by a different and more intuitive method of meromorphic extension, which seems, unfortunately, not applicable in the general case.
Finally, in Subsection~\ref{subsec:maca}, we apply yet another method of meromorphic extension for $k=1$, the \emph{Mellin-Barnes method}, which has an advantage of giving better languidity estimates but also seems not to be easily generalized to the case $k\geq 1$.
\smallskip
      
\begin{remark}[Parabolic orbits asymptotically form $\frac{1}{k}$-strings]
For a parabolic germ $f$ of multiplicity $k\in\mathbb N$, the following asymptotics of iterates holds; see, e.g., \cite{NeVeDa,formal}:
$$
f^{\circ n}(x_0)\sim n^{-\frac{1}{k}},\ n\to\infty.
$$
The \emph{approximate}\footnote{The distances $n^{-\frac{1}{k}}-(n+1)^{-\frac{1}{k}}\sim n^{-1-\frac{1}{k}},\ n\to\infty,$ between the {\em approximate} consecutive points of the orbit form a standard $a$-string, while the true distances between consecutive points, i.e.,  $f^{\circ n}(x_0)-f^{\circ n+1}(x_0)$, form a \emph{shifted $\frac 1 k$-string}, which is introduced in Subsection~\ref{subsec:astring} and slightly generalizes the notion of the $\frac 1 k$-string from  \cite{lapidusfrank12}.} distances between consecutive points, as $n\to\infty$, are $n^{-\frac{1}{k}-1}$. They form an \emph{approximate $a$-string} of the $a$-string $\mathcal L:=\{n^{-1-\frac{1}{k}}:\ n\in\mathbb N\}$,  where $a=\frac1k$, as in \cite{lapidusfrank12}. The results about zeta-functions and distributional asymptotics for $\frac{1}{k}$-strings are already known from \cite{fzf,lapidusfrank12}. In \cite[Theorem 6.21]{lapidusfrank12}, the geometric zeta function of the $\frac1k$-string, $$\zeta_{\mathcal L}(s):=\sum_{j\in\mathbb N}n^{-s\frac{k}{k+1}},$$ was shown to converge absolutely in the half-plane $\{\re s>\frac{k}{k+1}\}$, and it was meromorphically extended to $\mathbb C$, thus revealing complex dimensions of the $\frac{1}{k}$-string  and their residues. 

Let $\Omega:=\{n^{-\frac{1}{k}}:\,n\in\mathbb N\}$. The geometric zeta function of $\frac{1}{k}$-string $\zeta_{\mathcal L}$ of $\mathcal L$ is then related to the distance zeta function $\zeta_{\Omega}$ of the set $\Omega$ by a simple formula \cite{fzf}:
$$
\zeta_\Omega(s)=\frac{2^{1-s}}{s}\zeta_{\mathcal L}(s)+\frac{2\delta^s}{s},\ \re s>\frac{k}{k+1},
$$
where one chooses $\delta>1$.

Moreover, it was shown, by the theory of geometric zeta functions and their languidity estimates, that the inner epsilon-neighborhood, $|\Omega_{\varepsilon}\cap[0,1]|$, admits the following distributional asymptotic expansion, see \cite[Example 8.1.2]{lapidusfrank12}: 
$$
|\Omega_{\varepsilon}\cap[0,1]|\sim_{\mathcal D} a_0(k)\varepsilon^{\frac{1}{k+1}}+\sum_{j=1}^{\infty} a_j(k)\cdot \mathrm{Res}\left(\zeta_{\mathcal L},-j\frac{k}{k+1}\right)\varepsilon ^{1+j\frac{k}{k+1}},\ \varepsilon\to  0^+,
$$
where $a_j(k)\in\mathbb R$, $j\in\mathbb N_0$.
\end{remark}
\medskip

In the rest of this section we compute precisely the distance zeta functions for orbits of model germs from the formal class $k\in\mathbb{N}$ and $\rho=0$, without approximating heuristically by the $\frac{1}{k}$-strings as above, but working directly with the exact fractal string generated by the orbit $\mathcal{O}_{f}(x_0)$. This brings us, in the proof of Proposition~\ref{prop:geo_k} in Subsection~\ref{subsec:astring}, to the method of \emph{shifted $a$-strings} introduced in this paper which slightly generalize the $a$-string method from \cite[\S6.5.1]{lapidusfrank12}. 

Furthermore, we meromorphically extend the distance zeta function of the orbit to all of $\mathbb{C}$, and use it later in Section~\ref{sec:cont} to obtain explicit formulas for the distributional expansion of its tube function. 
  
Note that the explicit computations shown in Sections~\ref{sec:modelcomp} and \ref{sec:cont} are elaborate even in the model cases. In the case of general tangent to the identity diffeomorphisms $f(x)=x+o(x)$, non-model and belonging to a general formal class $(k,\rho)$, $k\in\mathbb N,\ \rho\in\mathbb R$, we show in Theorem~B \emph{theoretically} the existence of the meromorphic extension of distance zeta functions of their orbits to all of $\mathbb C$. We also describe their complex dimensions in terms of coefficients appearing in the generalized asymptotics of their tube function $V_f(\varepsilon)$, established in Theorem~A.

\subsection{The case $k=1$}\label{subsec:dva}
Let $f_0\in\mathrm{Diff}(\mathbb R,0)$ be a model germ for $k=1$ and $\rho=0$:
\begin{equation}
	\label{eq:f(z)}
	f_0(x)=\frac{x}{1+x}=x(1-x+x^2+o(x^2)),\ x\to 0.
\end{equation}
Let $x_0\in(0,1)$ and let $\mathcal O_{f_0}(x_0)$ be the (attracting) orbit of $f_0$ with initial point $x_0$.
Let, as in \eqref{eq:fras}, $\mathcal L_{f_0}=\{\ell_j:\, j\in\mathbb N\}$ be the fractal string generated by $\mathcal{O}_f(x_0)$. Here,
\begin{equation*}
	\ell_j:=f_0^{\circ j}(x_0)-f_0^{\circ (j+1)}(x_0)=g_0(f_0^{\circ j}(x_0)),
\end{equation*}
where 
\begin{equation*}
	g_0(x):=x-f_0(x)=\frac{x^2}{1+x}.
\end{equation*}
One can easily obtain by induction that
\begin{equation*}
	f_0^{\circ j}(x)=\frac{x}{1+jx},\ j\in\mathbb N_0.
\end{equation*}
Consequently,
\begin{equation*}
	\ell_j=g\left(\frac{x_0}{1+jx_0}\right)=\frac{x_0^2}{(1+jx_0)(1+(j+1)x_0)}=\frac{1}{(j+x_0^{-1})(j+1+x_0^{-1})},\ j\in\mathbb N.
\end{equation*}
The geometric zeta function $\zeta_{\mathcal{L}_{f_0}}$ is now given as the Dirichlet series:
\begin{equation}
	\label{zeta_raw}
	\zeta_{\mathcal{L}_{f_0}}(s):=\sum_{j=0}^{\infty}\ell_j^s=\sum_{j=0}^{\infty}\frac{1}{(j+x_0^{-1})^s(j+1+x_0^{-1})^s}.
\end{equation}
The series is obviously absolutely convergent for all $s\in\Ce$ such that $\re s>1/2$, and divergent if $\re s\leq 1/2$, since the terms in the sum are asymptotic to $j^{-2s}$, as $j\to\infty$. 

By the theory of fractal zeta functions, the critical value $1/2$ is equal to the box dimension $\dim_B\mathcal{O}_{f_0}(x_0)$. Furthermore, $\zeta_{\mathcal{L}_{f_0}}$ is a holomorphic function in the open right-half plane $\{\re s>1/2\}$, see \cite[Theorem 1.10]{lapidusfrank12} or \cite[\S2.1.4]{fzf}.
\medskip

We first provide a heuristics for extending $\zeta_{\mathcal{L}_{f_0}}(s)$ meromorphically. Then we justify this heuristics in Proposition~\ref{geo_k1}, where we obtain the meromorphic extension of the geometric zeta function $\zeta_{\mathcal{L}_{f_0}}(s)$ to all of $\mathbb C$. As a direct consequence of the functional equation \eqref{func_dist_geo}, this implies the meromorphic extension of the distance zeta function $\zeta_{f_0}$ to whole $\mathbb C$ in Corollary~\ref{dist_zeta_1}.

\bigskip

\noindent \emph{A heuristics behind the meromorphic extension of $\zeta_{\mathcal{L}_{f_0}}$ to all of $\mathbb C$.}

\noindent Heuristically, we obtain the meromorphic extension of \eqref{zeta_raw} from $\{\re s>1/2\}$ to $\mathbb C$ by using the binomial series expansion. Introducing a shorthand
\begin{equation*}
	X:=x_0^{-1},
\end{equation*}
we have
\begin{equation*}
	\ell_j=\frac{1}{(j+X)(j+1+X)}=\frac{1}{(j+X)^2}\cdot\left(1+\frac{1}{j+X}\right)^{-1}.
\end{equation*}
Therefore,
\begin{equation}\label{eq:eljotes}
	\ell_j^s=\frac{1}{(j+X)^{2s}}\cdot\left(1+\frac{1}{j+X}\right)^{-s}=\sum_{m=0}^{\iy} {-s \choose m}\frac{1}{(j+X)^{2s+m}}.
\end{equation}
Heuristically, we recover \eqref{zeta} as:
\begin{equation}\label{zeta_l_hurwitz}
	\zeta_{\mathcal{L}_{f_0}}(s)=\sum_{j=0}^{\iy}\ell_j^s\sim\sum_{m=0}^{\iy} {-s \choose m}\zeta(2s+m,X),
\end{equation}
where $\zeta(s,q)$ is the Hurwitz zeta function introduced in \eqref{eq:hur}. Note that $\sim$ is not mathematically rigorous, due to formal change of the order of summation.
\medskip

We show in the following proposition that the above heuristics is indeed mathematically justified.

\begin{proposition}[Meromorphic extension of the geometric zeta $\zeta_{\mathcal{L}_{f_0}}(s)$ for $k=1$]\label{geo_k1}\
Let $f_0$ be the model germ as in \eqref{eq:f(z)}, and let $\mathcal O_{f_0}(x_0)$ be its orbit.
\begin{enumerate}
	\item For any fixed $M\in\eN$, the geometric zeta function $\zeta_{\mathcal{L}_{f_0}}(s)$ has a meromorphic extension to the open half plane $\{\re s >-M/2\}$, given by:
	\begin{equation}\label{fun_eq}
		\zeta_{\mathcal{L}_{f_0}}(s)=\sum_{j=0}^{\iy}\ell_j^s=\sum_{m=0}^{M} {-s \choose m}\zeta(2s+m,x_0^{-1})+g_{M+1}(s),
	\end{equation}
	where the function $g_{M+1}$ is holomorphic in the open half plane $\{\re s >-M/2\}$.
	Furthermore, for every $\gamma>0$ and every $s\in\mathbb{C}$ such that $\re s\geq-M/2+\gamma$,
	\begin{equation*}
		g_{M+1}(s)=O(|s|^{M+1}),\ |s|\to+\infty.
	\end{equation*}
All the poles of $\zeta_{\mathcal{L}_{f_0}}(s)$ in $\{\re s >-M/2\}$, $M\in\mathbb N$, are simple and located at:
	\begin{equation*}
		\omega_{l}=\frac{1}{2}-l,\quad l=0,1,2,\ldots,\floor{M/2}.
	\end{equation*}
\item The geometric zeta function $\zeta_{\mathcal{L}_{f_0}}(s)$ is super languid for any screen $S_{\alpha}=\{\re s =\alpha\}$ with $\alpha<\frac12$.
The upper bound on the super languidity exponent $\kappa_{\mathcal{L}_{f_0}}$ depends on $\alpha$.
More precisely, the holomorphic remainder $g_{M+1}$ is the dominating term in \eqref{fun_eq}, and hence the super languidity exponent is bounded from above by $M+1$, when $\alpha\in\left(-\frac{M}{2},-\frac{M-1}{2}\right]$.
\end{enumerate}
\end{proposition}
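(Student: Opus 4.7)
The strategy is to make rigorous the heuristic manipulation in \eqref{zeta_l_hurwitz}, where one cannot simply swap the two summations. My approach is to truncate the binomial expansion \eqref{eq:eljotes} at order $M$ with an explicit Taylor remainder, sum over $j$, and invoke the meromorphic continuation of the Hurwitz zeta function. Concretely, writing $X:=x_0^{-1}>1$ and $u_j:=(j+X)^{-1}\in(0,x_0]$, I would expand
\[
(1+u_j)^{-s}=\sum_{m=0}^{M}\binom{-s}{m}u_j^m+r_{M+1}(s,u_j),
\]
with $r_{M+1}$ in the integral Taylor form, so that $|r_{M+1}(s,u_j)|\le C_\gamma\bigl|\binom{-s}{M+1}\bigr|u_j^{M+1}$ holds uniformly for $\re s\ge -M/2+\gamma$ and $j\in\mathbb N_0$. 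Multiplying by $(j+X)^{-2s}$ and summing in $j$ yields the finite sum $\sum_{m=0}^{M}\binom{-s}{m}\zeta(2s+m,X)$ plus the tail $g_{M+1}(s):=\sum_{j\ge 0}(j+X)^{-2s}r_{M+1}(s,u_j)$.

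Next I would establish that $g_{M+1}$ is holomorphic on $\{\re s>-M/2\}$: the termwise bound $C_\gamma\bigl|\binom{-s}{M+1}\bigr|(j+X)^{-(2\re s+M+1)}$ is summable precisely when $\re s>-M/2$, and uniform convergence on compacta follows by the Weierstrass M-test. The identity \eqref{fun_eq} first holds in the common domain of absolute convergence of both sides, and then extends by the identity theorem, using the classical meromorphic continuation of $\zeta(\,\cdot\,,X)$ to $\Ce$ with a single simple pole at $1$ of residue $1$. To locate the visible poles one checks the binomial prefactor at each candidate: for odd $m=2l+1$ the pole of $\zeta(2s+m,X)$ lies at $s=-l$ where $\binom{-s}{m}=\binom{l}{2l+1}=0$, so it is cancelled; for even $m=2l$ the pole at $\omega_l=1/2-l$ survives since $\binom{l-1/2}{2l}\neq 0$. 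This recovers exactly the poles listed in (1).

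For (2), I would show super languidity on a closed vertical substrip $\{\alpha\le\re s\le\beta\}$ with $\alpha\in(-M/2,-(M-1)/2]$ by bounding each term of \eqref{fun_eq} as $|\im s|\to\infty$. The prefactors $\binom{-s}{m}$ are polynomials of degree $m\le M$, the meromorphically continued $\zeta(2s+m,X)$ grows polynomially in $|\im s|$ on vertical strips by classical convexity estimates, and $g_{M+1}(s)=O(|s|^{M+1})$ follows from the pointwise remainder bound of the first step. The remainder is the dominant contribution, giving the claimed exponent bound $\kappa_{\mathcal L_{f_0}}\le M+1$.

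The technical step I expect to be the main obstacle is the uniform control of $r_{M+1}(s,u_j)$ as $|\im s|\to\infty$. Using the integral form
\[
r_{M+1}(s,u)=\binom{-s}{M+1}(M+1)u^{M+1}\int_0^1(1-t)^M(1+tu)^{-s-M-1}\,dt,
\]
the modulus of the integrand is bounded by $(1+tu)^{-\re s-M-1}$ uniformly in $\im s$, which is controlled for $u\in(0,x_0]$ and $\re s$ bounded below; the rapidly oscillating phase contributes nothing to the absolute bound. Granted this, the absolute summability in $j$ and the polynomial growth $g_{M+1}(s)=O(|s|^{M+1})$ follow together, and both parts of the proposition are concluded.
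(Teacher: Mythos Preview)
Your proposal is correct and follows essentially the same route as the paper's own proof: truncate the binomial expansion of $(1+u_j)^{-s}$ at order $M$, control the Taylor remainder uniformly in $j$ and $\im s$, sum over $j$ to obtain the finite combination of shifted Hurwitz zeta functions plus the holomorphic tail $g_{M+1}$, and then read off the poles and languidity exponent term by term. The only cosmetic difference is that you use the integral form of the Taylor remainder whereas the paper uses the Lagrange form with an intermediate point $\xi_j\in(0,u_j)$; both yield the identical bound $|r_{M+1}(s,u_j)|\le\bigl|\binom{-s}{M+1}\bigr|u_j^{M+1}$ on $\{\re s\ge -M/2+\gamma\}$, and the paper similarly invokes a specific Hurwitz-zeta growth estimate (from Katsurada) where you appeal to ``classical convexity estimates.''
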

\begin{remark}
Proposition \ref{geo_k1} does not provide equality in \eqref{zeta_l_hurwitz}, since the remainder function $g_{M+1}(s)$ is $O(|s|^{M+1})$, $M\in\mathbb N$, as $|s|\to +\infty$.
	Therefore, \eqref{zeta_l_hurwitz} should be understood only as a useful \emph{formal} series from which one can extract the poles of $\zeta_{\mathcal{L}_{f_0}}(s)$ and their residues. 
\end{remark}
Using the functional equation \eqref{func_dist_geo} between geometric and distance zeta functions, we get the following direct corollary of Proposition \ref{geo_k1}:
\begin{corollary}[Meromorphic extension of the distance zeta function $\zeta_{f_0}(s)$ for $k=1$]\label{dist_zeta_1} Let $f_0$ be as in \eqref{eq:f(z)} and let $\mathcal O_{f_0}(x_0)$ be its orbit.
\begin{enumerate}

\item For any fixed $M\in\eN_0$, the distance zeta function $\zeta_{f_0}$ has a meromorphic extension to the open half-plane $\{\re s >-M/2\}$, given by:
	\begin{equation}\label{fun_eq_dist}
		\zeta_{f_0}(s)=\frac{2^{1-s}}{s}\sum_{m=0}^{M} {-s \choose m}\zeta(2s+m,x_0^{-1})+\tilde{g}_{M}(s),
	\end{equation}
	where the function $\tilde{g}_{M}$ is holomorphic in the open half plane $\{\re s >-M/2\}$.
	Furthermore, for every $\gamma>0$ and for every $s\in\mathbb{C}$ such that $\re s\geq-M/2+\gamma$,
	\begin{equation*}
		\tilde{g}_{M}(s)=O(|s|^{M}).
	\end{equation*}
For $M>1$, all of the poles of $\zeta_{f_0}$ in $\{\re s >-M/2\}$ are simple and located at
	\begin{equation}\label{pole0}
		\mathcal{P}_M:=\Big\{\omega_{l}=\frac{1}{2}-l,\quad l=0,1,2,\ldots,\floor{\frac{M}{2}}\Big\}\cup\{\omega_{1/2}:=0\},
	\end{equation}
	whereas in the special case $M=0$ we have $\mathcal P_0:=\{1/2\}$.

\item The distance zeta function $\zeta_{f_0}(s)$ is super languid  for any screen $S_{\alpha}=\{\re s=\alpha\}$, with $\alpha <\frac12$.
The upper bound on the super languidity exponent $\kappa_{{f_0}}$ depends on $\alpha$ and equals $M$ whenever $\alpha\in\left(-\frac{M}{2},-\frac{M-1}{2}\right]$, $M\in\mathbb N$.
\end{enumerate}
\end{corollary}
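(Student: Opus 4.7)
The plan is to deduce both parts of the corollary directly from Proposition~\ref{geo_k1} via the functional equation \eqref{func_dist_geo}, which reads $\zeta_{f_0}(s) = (2^{1-s}/s)\,\zeta_{\mathcal L_{f_0}}(s)$. Substituting the meromorphic decomposition \eqref{fun_eq} of $\zeta_{\mathcal L_{f_0}}$ into the right-hand side immediately produces \eqref{fun_eq_dist} with the candidate remainder $\tilde g_M(s) := (2^{1-s}/s)\,g_{M+1}(s)$.

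The first nontrivial step is to check that $\tilde g_M$ is genuinely holomorphic throughout $\{\re s > -M/2\}$. Since $g_{M+1}$ is holomorphic there by Proposition~\ref{geo_k1}(1), and $2^{1-s}/s$ is holomorphic away from $s=0$, the only issue is the point $s=0$, which lies in the window precisely when $M \geq 1$. It therefore suffices to show $g_{M+1}(0) = 0$. This follows by revisiting the derivation of \eqref{fun_eq}: writing the binomial expansion with remainder $(1+u)^{-s} = \sum_{m=0}^{M}{-s \choose m}u^m + R_{M+1}(s,u)$ and applying it with $u = (j+x_0^{-1})^{-1}$, one obtains $g_{M+1}(s) = \sum_{j \geq 0} R_{M+1}(s,(j+x_0^{-1})^{-1})(j+x_0^{-1})^{-2s}$; at $s=0$ the remainder $R_{M+1}(0,u) = 1-1 \equiv 0$ vanishes identically in $u$, hence $g_{M+1}(0)=0$. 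The growth bound $\tilde g_M(s) = O(|s|^M)$ on $\{\re s \geq -M/2 + \gamma\}$ is then immediate from the corresponding $O(|s|^{M+1})$ bound on $g_{M+1}$, together with the fact that $|2^{1-s}|$ is bounded on any vertical strip while $|1/s| = O(|\im s|^{-1})$.

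The pole structure on $\{\re s > -M/2\}$ is read off the first term of \eqref{fun_eq_dist}. Each Hurwitz factor $\zeta(2s+m, x_0^{-1})$ contributes a candidate simple pole at $s = (1-m)/2$; for odd $m = 2l+1$ this candidate lies at the negative integer $s = -l$ and is killed by the zero of ${l \choose 2l+1}$, while for even $m = 2l$ it lies at the half-integer $s = 1/2 - l$ and survives, producing the points $\omega_l$ in \eqref{pole0}. The factor $1/s$ contributes one further simple pole at $\omega_{1/2} := 0$ whenever $0$ lies in the window (i.e.\ for $M \geq 1$), with nonzero residue $1 - 2x_0^{-1}$ since $\zeta(0, x_0^{-1}) = 1/2 - x_0^{-1} \neq 0$ for $x_0 \in (0,1)$; when $M = 0$ the window $\{\re s > 0\}$ does not contain $0$ and only $\omega_0 = 1/2$ remains visible, matching $\mathcal P_0 = \{1/2\}$.

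For the super languidity, on any closed vertical strip with left edge $\alpha \in (-M/2, -(M-1)/2]$ Proposition~\ref{geo_k1}(2) gives $|\zeta_{\mathcal L_{f_0}}(s)| = O(|\im s|^{M+1})$, and combining this with $|2^{1-s}/s| = O(|\im s|^{-1})$ on the strip yields $|\zeta_{f_0}(s)| = O(|\im s|^M)$, so $\kappa_{f_0} \leq M$, as claimed. The main obstacle in the whole program is the identity $g_{M+1}(0) = 0$ from the second paragraph: without it the naive product has a spurious simple pole at the origin and the clean decomposition \eqref{fun_eq_dist} fails; everything else is a bookkeeping transfer from Proposition~\ref{geo_k1} through the functional equation.
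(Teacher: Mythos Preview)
Your proof is correct and follows exactly the paper's route: transfer Proposition~\ref{geo_k1} through the functional equation $\zeta_{f_0}(s)=\tfrac{2^{1-s}}{s}\zeta_{\mathcal L_{f_0}}(s)$, then use Remark~\ref{sup-lang-ex} for languidity. The paper establishes the holomorphy of $\tilde g_M$ at $s=0$ by observing that the Lagrange remainder in \eqref{func_eq_bin} carries an explicit factor $\binom{-s}{M+1}$ vanishing at the origin; your argument that $R_{M+1}(0,u)=(1+u)^0-1\equiv 0$ is just the same fact phrased via the implicit remainder.

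One small inaccuracy that does not affect the argument: your residue at $s=0$ is not quite $1-2x_0^{-1}$, because the $m=1$ summand also contributes. Near $s=0$ one has $\tfrac{2^{1-s}}{s}\binom{-s}{1}\zeta(2s+1,x_0^{-1})=-2^{1-s}\zeta(2s+1,x_0^{-1})$, which has a simple pole with residue $-1$; adding this to the $m=0$ contribution gives $\operatorname{Res}(\zeta_{f_0},0)=-2x_0^{-1}$, which is consistent with the coefficient of $\varepsilon$ in Proposition~\ref{k=1_razvoj_p}. Since this is still nonzero, your conclusion that $\omega_{1/2}=0$ is a genuine simple pole stands.
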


Note that statement (2) follows by Proposition~\ref{geo_k1} $(2)$, applying Remark \ref{sup-lang-ex} to the functional equation \eqref{func_dist_geo}. The factor $2^{1-s}$ does not affect languidity since it can be uniformly bounded by a constant inside any closed vertical strip.
Note also that the additional pole $\omega_{1/2}=0$ of the distance zeta function $\zeta_{f_0}$ arises only from the first term of \eqref{fun_eq_dist}. Indeed, it is cancelled by the binomials ${-s \choose m}$ for all $1\leq m\leq M$, as well as by the binomial ${-s\choose M+1}$, which is a factor of the remainder $\tilde{g}_M$, see \eqref{func_eq_bin}.
\bigskip

\noindent \emph{Proof of Proposition~\ref{geo_k1}}.\ 

$(1)$ Let us fix $M\in\eN$ and consider the $M$-th Taylor polynomial at $0$ of the parenthesis in $\ell_j^s$ from \eqref{eq:eljotes}:
	\begin{align}\begin{split}\label{taylor_ell}
		\ell_j^s=&\frac{1}{(j+X)^{2s}}\cdot\left(1+\frac{1}{j+X}\right)^{-s}=\\
		=&\frac{1}{(j+X)^{2s}}\left(\sum_{m=0}^{M} {-s \choose m}\frac{1}{(j+X)^{m}}+R_{M}\big((j+X)^{-1}\big)\right),
	\end{split}\end{align}
	where the remainder is given by its integral form:
	\begin{equation*}
		R_{M}\big((j+X)^{-1}\big)=(M+1){-s \choose M+1}\int_0^{(j+X)^{-1}}(1+t)^{-s-(M+1)}((j+X)^{-1}-t)^M\di t.
	\end{equation*}
	If we restrict $\re s>-M/2>-(M+1)$, we have that $|(1+t)^{-s-(M+1)}|\leq 1$ for all $t\geq 0$.
	Therefore, for $s\in\mathbb C$ such that $\re s>-M/2$,
	\begin{equation}\label{ostatak}
		|R_{M}((j+X)^{-1})|\leq \left|{-s \choose M+1}\right|\cdot (j+X)^{-(M+1)}.
	\end{equation}

Summing \eqref{taylor_ell} over all $j\geq 0$, we get:
	\begin{equation}\label{func_eq_bin}
	\begin{aligned}
		\sum_{j=0}^{\iy}\ell_{j}^s&=\sum_{m=0}^M{-s \choose m}\sum_{j=0}^{\iy}\frac{1}{(j+X)^{2s+m}}+\sum_{j=0}^{\iy}\frac{R_{M}((j+X)^{-1})}{(j+X)^{2s}}=\\
		&=\sum_{m=0}^M{-s \choose m}\zeta(2s+m,X)+\sum_{j=0}^{\iy}\frac{R_{M}((j+X)^{-1})}{(j+X)^{2s}}.
	\end{aligned}
	\end{equation}
	In order to justify the interchange of order of summation in the first sum, note that the first double sum on the right hand side in the above equation converges absolutely and uniformly on every compact subset of $\{\re s>1/2\}$. Therefore, it defines a holomorphic function in that half plane, which coincides with the first sum in the second line.
	The second sum above also converges absolutely and uniformly on every compact subset of $\{\re s>-M/2\}$ since, by \eqref{ostatak}, we have:
	\begin{equation}\label{f-ocjena}
		\sum_{j=0}^{\iy}\left|\frac{R_{M}((j+X)^{-1})}{(j+X)^{2s}}\right|\leq\left|{-s \choose M+1}\right|\cdot\sum_{j=0}^{\iy}\frac{1}{(j+X)^{2\re s+M+1}}.
	\end{equation}
By the Weierstrass M-test it also defines a holomorphic function in the open half plane $\{\re s>-M/2\}$. We have shown that the functional equation \eqref{func_eq_bin} holds in the open half plane $\{\re s>1/2\}$ and therefore, by the uniqueness of analytic continuation, it holds also in $\{\re s>-M/2\}$. We let:
	\begin{equation*}
		g_{M+1}(s):=\sum_{j=0}^{\iy}\frac{R_{M}((j+X)^{-1})}{(j+X)^{2s}}.
	\end{equation*}

	Take $\gamma>0$. From \eqref{f-ocjena}, on the half-plane $\{\re s\geq-M/2+\gamma\}$ it holds that: 
	\begin{equation*}
		|g_{M+1}(s)|\leq \left|{-s \choose M+1}\right|\cdot \sum_{j=0}^{\iy}\frac{1}{(j+X)^{1+2\gamma}}=O(|s|^{M+1}),\ |s|\to\infty.
	\end{equation*}
	The last statement about the poles follows now from \eqref{fun_eq}, from the fact that $g_{M+1}$ is holomorphic in the half plane $\{\re s >-M/2\}$ and the zero-pole cancellations in the case of integer poles discussed earlier. This concludes the proof of (1).
\smallskip
	
\emph{$(2)$} 
In light of equation \eqref{func_eq_bin}, we will use a result about the growth of the Hurwitz zeta function along vertical lines.
Namely, if we define
\begin{equation*}
	\mu(\sigma;q)=\overline{\lim}_{\tau\to\pm\iy}\frac{\log|\zeta(\sigma+\I\tau,q)|}{\log|\tau|},
\end{equation*}
where $q>0$ is arbitrary, then, by \cite[Lemma 2]{katsurada} we have the following bounds:
\begin{equation}\label{hurwitz_bound}
		\mu(\sigma;q)\leq\begin{cases}
	                      	1/2-\sigma& \textnormal{ if }\sigma\leq 0,\\
	                      	\frac{1}{2}(1-\sigma)&\textnormal{ if } 0\leq\sigma\leq 1,\\
	                      	0&\textnormal{ if }\sigma\geq 1.
	                      \end{cases}
\end{equation}

The above bounds on $\mu(\sigma;q)$ and equation \eqref{func_eq_bin} imply that the geometric zeta function has at most polynomial growth along any vertical line and the growth rate becomes faster as the real part $\sigma=\re s$ becomes larger in the negative direction.
Also, note that, for $\re s>1/2$, the geometric zeta function $\zeta_{\mathcal{L}_{f_0}}(s)$ is bounded, a fact that follows directly from \eqref{zeta_raw}.

Finally, the above bounds also imply trivially that the Hurwitz zeta function is super languid in any closed vertical strip. Its super languidity coefficient is bounded by $\mu(\alpha,q)$, where $\alpha\in\mathbb{R}$ is the abscissa of the left edge of the strip. 
By Remark \ref{sup-lang-ex}, along with close inspection of the terms in equation \eqref{func_eq_bin}, we have that $\zeta_{\mathcal{L}_{f_0}}$ is also super languid in any closed vertical strip.
Namely, for fixed $M\in\mathbb{N}_0$ and fixed left edge abscissa $\alpha\in\left(-\frac{M}{2},-\frac{M-1}{2}\right]$, all of the terms ${-s \choose m}\zeta(2s+m,x_0^{-1})$ for $m=0,\ldots,M-1,$ are super languid with exponent $\frac{1}{2}-\alpha$, while the term ${-s \choose M}\zeta(2s+M,x_0^{-1})$ is super languid with exponent $\frac{1+M}{2}-\alpha$.\footnote{See the proof of Theorem \ref{shifted_a_prop} for a more detailed exposition of determining the super languidity exponent bounds in a similar type of the sum.}
Hence, the uniform bound on the super languidity exponent for all these terms is $M+\frac{1}{2}$, which is dominated by the bound $M+1$ on the super languidity exponent of the remainder $g_{M+1}$.
\qed
\medskip

\subsection{The general case $k\geq 1$. The \emph{shifted $a$-string} method.}\label{subsec:astring}\

Let $f_0\in\mathrm{Diff}(\mathbb R,0)$ be a germ of tangent to the identity model diffeomorphism of the formal class $(k,0)$, $k\in\mathbb N$:
$$
f_0(x):=\mathrm{Exp}\Big(-x^{k+1}\frac{d}{dx}\Big).\mathrm{id}.
$$
We get:
\begin{equation}
	\label{eq:f(z)k}
	f_0(x)=\frac{x}{(1+kx^k)^{1/k}}.
\end{equation}
By induction,
\begin{equation*}\label{eq:fk}
	f_0^{\circ j}(x)=\frac{x}{(1+jkx^k)^{1/k}},\ j\in\eN_0.
\end{equation*}

Let $\mathcal O_{f_0}(x_0)$ be its (attracting) orbit, $0<x_0<1$, and let $\mathcal L_{f_0}=\{\ell_j:j\in\mathbb N_0\}$ be the fractal string generated by its orbit. Then
\begin{equation}\label{eq:fsk}
\ell_j:=f_0^{\circ (j+1)}(x_0)-f_0^{\circ j}(x_0)=k^{-1/k}\Big(\big(j+\frac{1}{kx_0^k}\big)^{-1/k}-\big(j+1+\frac{1}{kx_0^k}\big)^{-1/k}\Big),\ j\in\mathbb N_0.
\end{equation}
Hence, its geometric zeta function is given by: 
\begin{equation}\label{eq:zz}
\zeta_{\mathcal{L}_{f_0}}(s)=\sum_{j=0}^{\infty}\ell_j^s=k^{-\frac{s}{k}}\sum_{j=0}^{\infty}\Big(\big(j+\frac{1}{kx_0^k}\big)^{-1/k}-\big(j+1+\frac{1}{kx_0^k}\big)^{-1/k}\Big)^s,\ \re s>\frac{k}{k+1}.
\end{equation}
The half-plane of convergence is  $\{\re s>\frac{k}{k+1}\}$, since $\ell_j\sim j^{-\frac{1}{k}-1}$, $j\to\infty$.
\bigskip

\begin{proof}[Proof of Proposition~\ref{prop:geo_k}]\

$(1)$ We interpret the fractal string $\mathcal L_{f_0}$ given by \eqref{eq:fras} with $\ell_j$, $j\in\mathbb N_0$, as in \eqref{eq:fsk} as a special case of a \emph{shifted $\frac{1}{k}$-string}; see \cite[\S6.5.1]{lapidusfrank12}.
Indeed, the $a$-string for $a>0$ is defined as a fractal string with lengths:
\begin{equation*}
	\ell_j:=j^{-a}-(j+1)^{-a},\ j\in\mathbb N.
\end{equation*}
For some $b>0$, we define the \emph{shifted $a$-string} $\mathcal L_{a,b}$ as a fractal string with lengths:
\begin{equation}
	\label{shift-a}
	\ell_j(b):=(j+b)^{-a}-(j+1+b)^{-a},\ j\in\mathbb N_0.
\end{equation}
Note that, for $b=1$, we get the standard $a$-string. 

Put $b_k:=\frac{1}{kx_0^k}$. Then, $\ell_{j}$ from \eqref{eq:fsk} can be written as $\ell_j=k^{-1/k}\ell_j(b_k)$, $j\in\mathbb N_0$. Therefore, \eqref{eq:zz} becomes:
\begin{equation*}
	\zeta_{\mathcal{L}_{f_0}}(s)=k^{-\frac{s}{k}}\sum_{j=0}^{\infty}\ell_j(b_k)^s,\ \re s>\frac{k}{k+1}.
\end{equation*}
Now the statement of Proposition~\ref{prop:geo_k} follows immediately from the more general Theorem~\ref{shifted_a_prop} about meromorphic extension of geometric zeta functions of shifted $a$-strings, which is stated and proven in Subsection~\ref{subsec:sas} of the Appendix, combined with the functional equation \eqref{func_dist_geo} relating the distance zeta function to the geometric zeta function.
\smallskip

$(2)$ By Theorem~\ref{shifted_a_prop} in the Appendix, on closed vertical strips with left edge abscissa $\alpha\in\left(-\frac{Mk}{k+1},-\frac{(M-1)k}{k+1}\right]$, $\zeta_{\mathcal{L}_{f_0}}(s)$ is super languid with exponent $\kappa_{\mathcal{L}_{f_0}}=M+1$, since the holomorphic remainder $k^{-\frac{k+1}{k}s}R(s)$ in \eqref{zetaab_W} dominates the rest of the terms. Hence, by the functional equation \eqref{func_dist_geo} and Remark \ref{sup-lang-ex}, the distance zeta function $\zeta_{f_0}$ is languid with exponent $\kappa_f=M$.
\end{proof}

\subsection{The method by \emph{Mellin-Barnes integral formula}}\label{subsec:maca}\

In this subsection we construct, by another method, the meromorphic extension of the geometric zeta function $\zeta_{\mathcal{L}_{f_0}}(s)$ for the model diffeomorphism $f_0$ of multiplicity $k=1$ given by \eqref{eq:f(z)}.
This method gives better languidity estimates than Proposition~\ref{geo_k1}, which guarantee that the first term of the asymptotics of the tube function $\varepsilon\mapsto V_{f_0}(\varepsilon)$, as $\varepsilon\to 0^+$, in \eqref{k=1_razvoj_f} in Section~\ref{sec:cont} is not only distributional, but pointwise.
It seems that this method can be generalized to the case $k>1$ (by induction), in order to get better languidity estimates than by the shifted $a$-string method used in the proof Proposition~\ref{prop:geo_k}. However, we omit it, since it involves tedious computer software computations without any conceptual gain, as stated in Remark~\ref{obs:rema}.

The method is a slight modification of the method used in \cite{matzu} and arises from the classical \emph{Mellin-Barnes formula}:
\begin{equation}\label{eq:MB}
	(1+\lambda)^{-s}=\frac{1}{2\pi\I\Gamma(s)}\int_{(-c)}\Gamma(s+z)\Gamma(-z)\lambda^z\di z,
\end{equation}
valid for $|\arg \lambda|<\pi$, $\lambda\neq 0$, $0<c<\re s$. The path of integration, denoted by $(-c)$, is the vertical line $\{\re z=-c\}$.

Let $f_0$ be a model parabolic germ of multiplicity $k\in\mathbb N$ given by \eqref{eq:f(z)k}, and let $\mathcal O_{f_0}(x_0)$ be its orbit. Recall from \eqref{eq:zz} that $\zeta_{\mathcal L_{f_0}}(s)=\sum_{j=0}^{\infty}\ell_{j}^s$, where
\begin{equation*}\label{ell_kj}
	\ell_{j}=f^{\circ j}_0(x_0)-f^{\circ (j+1)}_0(x_0)=\frac{k^{-\frac{1}{k}}}{B_{k,j}^{\frac{k+1}{k}}\sum_{l=1}^k(1+B_{k,j}^{-1})^{\frac{k+1-l}{k}}},\ j\in\mathbb N_0,
\end{equation*}
and
\begin{equation*}
	B_{k,j}:=j+\frac{1}{kx_0^k},\ j\in\mathbb N_0.
\end{equation*}
\medskip

In the case $k=1$, by the Mellin-Barnes formula \eqref{eq:MB}, we get:
\begin{equation*}
	\ell_j^s=B_{1,j}^{-2s}(1+B_{1,j}^{-1})^{-s}=\frac{1}{2\pi\I\Gamma(s)}\int_{(-c)}\Gamma(s+z)\Gamma(-z)B_{1,j}^{-2s-z}\di z,\ \mathrm{Re}\, s>c.
\end{equation*}
We take $c>0$ to be a small positive number, $c\approx 0^+$. Summing over $j\in\mathbb N_0$, we obtain:
\begin{equation}\label{mb_k1}
	\zeta_{\mathcal{L}_{f_0}}(s)=\frac{1}{2\pi\I\Gamma(s)}\int_{(-c)}\Gamma(s+z)\Gamma(-z)\zeta(2s+z,x_0^{-1})\di z,\ \mathrm{Re}\, s>\frac{c+1}{2},
\end{equation}
where the interchange of integration and summation is easily justified by the dominated convergence theorem applied to the sequence of partial sums in the open half plane $\{\re s >(1+c)/2\}$.

To get a meromorphic continuation of $\zeta_{\mathcal{L}_{f_0}}(s)$ to $\mathbb C$, one shifts the path of integration in the integral on the right hand side of \eqref{mb_k1} to the right, i.e., to the vertical line $\{\re s=M-\eta\}$, with $M\in\eN$ and $\eta>0$ a small positive number.
The shifted integral is well-defined since, for $\re s >(1+c)/2$ and $\re z>-c$, $\zeta(2s+z,x_0^{-1})$ is bounded and the product of the gamma functions in the integrand of \eqref{mb_k1} decays exponentionally when $|\im z|\to\infty$ uniformly in any vertical strip of finite width contained in the open half-plane $\{\re z>-c\}$.
Furthermore, the only residues of the integrand that are contained in the vertical strip $\{-c<\re z<M-\eta\}$ arise from the factor $\Gamma(-z)$ and are exactly equal to $0,1,2\ldots,M-1$. By the residue theorem, we get, in the larger open half-plane $\left\{\re s> -\frac{M-1}{2}+\frac{\eta}{2}\right\}$, 
\begin{equation*}\label{zeta_1_matzu}
\begin{aligned}
	\zeta_{\mathcal{L}_{f_0}}(s)&=\sum_{n=0}^{M-1}{-s\choose n}\zeta(2s+n,x_0^{-1})+I_{M-\eta}(s),
\end{aligned}
\end{equation*}
where the right-hand side is meromorphic in $\left\{\re s> -\frac{M-1}{2}+\frac{\eta}{2}\right\}$. Indeed, the integral
\begin{equation}\label{eq:I}
	I_{M-\eta}(s):=\frac{1}{2\pi\I\Gamma(s)}\int_{(M-\eta)}\Gamma(s+z)\Gamma(-z)\zeta(2s+z,x_0^{-1})\di z
\end{equation}
converges absolutely exactly for such $s\in\mathbb C$.
Shifting further in this way with increasing $M\to+\infty$, we get a meromorphic extension to all of $\Ce$.
\medskip

\emph{The languidity estimates by Mellin-Barnes.}\ The advantage of this approach in deducing languidity estimates is that we have more information on the holomorphic remainder, which is now given as an integral \eqref{eq:I}. Moreover, we are not confined to successive meromorphic extensions by half-planes with integer steps, as it was the case in the proof of Proposition~\ref{geo_k1}, but rather with \emph{continuous} steps controlled by $\eta>0$.

We estimate the remainder \eqref{eq:I} by using the technique described in \cite[\S2]{matzu}.
Namely, let $s=\sigma+\I\tau$ and $-\frac{M-1}{2}+\frac{\eta}{2}<\sigma<\Theta$ for some fixed $\Theta$. Then, by Stirling's formula, for large $|s|$, firstly we bound the integral remainder $I_{M-\eta}(s)$ in this strip by a positive constant multiplied by
\begin{equation}\label{est_matz}
	(1+|\tau|)^{-\sigma+\frac{1}{2}}\E^{\frac{\pi}{2}|\tau|}\int_{-\infty}^{+\infty}(1+|\tau+x|)^{\sigma-\frac{1}{2}+M-\eta}(1+|x|)^{-M+\eta-\frac{1}{2}}\E^{-\frac{\pi}{2}(|\tau+x|+|x|)}\di x.
\end{equation}
Indeed, $\zeta(2s+z,x_0^{-1})=O(1)$, as $|s|\to\infty,$ since we are restricting the range of $\sigma$, which assures that $\re(2s+z)>1$.
Next, we apply the technical result \cite[Lemma 2]{matzu} to the above integral with $u:=\tau$, $y:=x$, $p:=\sigma-\frac{1}{2}+M-\eta$, $q:=0$, $r:=-M+\eta-\frac{1}{2}$ in the notation of \cite[Lemma 2]{matzu}, which gives us the uniform estimate of integral from \eqref{est_matz} in the strip $-\frac{M-1}{2}+\frac{\eta}{2}<\sigma<\Theta$ as:
\begin{equation*}
	O\left((1+(1+|\tau|)^p)(1+(1+|\tau|)^{r+1})\E^{-\frac{\pi}{2}|\tau|}\right),\ |\tau|\to\infty.
\end{equation*}
Combining the above estimate with the factor in front of the integral in \eqref{est_matz} and putting $L:=M-\eta$, we obtain the uniform estimate of \eqref{eq:I} in the strip as:
\begin{equation*}
	I_{L}(s)=O\left(\max\left\{(1+|\tau|)^Q:Q\in\left\{\frac{1}{2},L,-\sigma+\frac{1}{2},-\sigma+1-L\right\}\right\}\right),
\end{equation*}
as $|\tau|\to\infty$. 
Note that the choice of $M\in\eN$ means that we have $L\in(M-1,M)$, depending on the choice of $\eta>0$.

Taking $M=1$, $L\in(0,1/2)$, and choosing for the screen the vertical line $\re s=\tilde{\sigma}$, where $\tilde{\sigma}\in(0,1/2)$, we obtain that $I_{L}(s)=O((1+|\tau|)^{\tilde{Q}})$, where $\tilde{Q}<1$.
This implies an overall languidity estimate of the corresponding distance\footnote{Recall that, in order to obtain the distance zeta function, we just multiply the geometric zeta function by the factor $\frac{2^{1-s}}{s}$, where the term $2^{1-s}$ is bounded by a constant in any closed vertical strip of finite width.} zeta function $\zeta_{\mathcal O_{f_0}(x_0)}(s)$ of the type $O\big((1+|\tau|)^{\tilde{Q}-1}\big)$. That is, we have rational decay when $|s|\to\infty$ in the half-plane ${\re s>\tilde{\sigma}}$. Therefore, since the only pole in this half-plane is $\omega=1/2$, the asymptotic formula \eqref{k=1_razvoj_f} for the asymptotic expansion of the tube function $V_{f_0}(\varepsilon)$, as $\varepsilon\to 0^+$, is valid not only distributionally, but also \emph{pointwise in the first term}. For details, see the next Section~\ref{sec:cont}.
Also, note that, as soon as we take $M=2$, we fail to get a pointwise term, since $L>1$. However, in \cite[Theorem B]{MRRZ3}, it is shown directly that the asymptotic formula \eqref{k=1_razvoj_f} actually holds pointwise in the first two terms. 
\medskip

\section{From complex dimensions to distributional expansions of the tube functions}\label{sec:cont}

It is known from \cite[Theorems 5.3.16, 5.3.17, 5.3.21]{fzf} or \cite[Theorems 8.1 and 8.7]{lapidusfrank12}; see also \cite[\S5.5.2]{fzf}, that, under appropriate \emph{languidity estimates} on the growth of the meromorphic extension of the geometric or distance zeta function of a fractal string along closed vertical strips, there exists an explicit formula that recovers, from the complex dimensions of the set and their residues, the asymptotic expansion of its tube function, i.e., the Lebesgue measure of the $\varepsilon$-neighborhood of the set, as $\varepsilon\to 0^+$. Moreover, in case of the distance zeta function, while the languidity estimates are \emph{rational} (with negative exponents), the expansion is pointwise, and it becomes a distributional expansion when the languidity estimates become \emph{polynomial}.

See \cite[\S5.3]{fzf} for recovering the asymptotics of the tube function from the distance zeta function and \cite[\S8]{lapidusfrank12} or \cite[\S5.5.2]{fzf} for recovering the asymptotics from the geometric zeta function of the fractal string.
Although we could use \cite[Theorems 8.1 and 8.7]{lapidusfrank12} and work with the geometric zeta function directly, here we choose to use \cite[Theorems 5.3.16, 5.3.21]{fzf}, since then the explicit formulas for the asymptotics are nicer and less technical.
Namely, if one uses the geometric zeta function and the explicit formulas obtained in \cite[Theorems 8.1 and 8.7]{lapidusfrank12}, one has to take special care about the pole at zero, i.e., the complex dimension $0$, whereas if one uses the the more general distance zeta function from \cite{fzf}, the pole at zero does not need special attention.
\smallskip

In particular, in this Section we will apply \cite[Theorem 5.3.21]{fzf} to meromorphic extensions of distance zeta functions of orbits of model germs $f_0\in\mathrm{Diff}(\mathbb R,0)$ from the formal classes $(k,\rho=0)$, $k\in\mathbb N$, to recover the distributional asymptotics of the tube function $V_{f_0}(\varepsilon)$, as $\varepsilon\to 0^+$. Recall that the complex dimensions and languidity estimates are given in Proposition~\ref{geo_k1} and Corollary~\ref{dist_zeta_1} (in the case $k=1$) and Proposition~\ref{prop:geo_k} (in the case $k\geq 1$), and that languidity estimates are improved for the case $k=1$ by using the Mellin-Barnes formula in Subsection~\ref{subsec:maca}.

Note that it was proven in \cite{formal} and \cite[Theorem B]{MRRZ3} that, for every parabolic $f\in\mathrm{Diff}(\mathbb R,0)$, the tube function $V_f(\varepsilon)$ admits the \emph{pointwise} asymptotic expansion  in the power-logarithm scale \emph{up to the term $O(\varepsilon^{2-\frac{1}{k+1}})$} as $\varepsilon\to 0^+$, and a complete description of this expansion was provided.

Note also that in the terminology (and notation) of \cite[\S4]{fzf}, the choice of the definition \eqref{zeta_f} of the distance zeta function $\zeta_f$ that we made in this paper means that we are actually working with the the {\em relative distance zeta function} $\zeta_{A,\Omega}$ of the \emph{relative fractal drum} $(A,\Omega)$ (or RFD, in short), where the pair of sets $(A,\Omega)$ is given as  $A:=\mathcal{O}_{f_0}(x_0)$ and $\Omega:=[0,x_0]$.
In other words, by definition, we have $\zeta_f(s)=\zeta_{\mathcal{O}_{f_0}(x_0),[0,x_0]}$, $\re s>\dim_B\mathcal{O}_{f_0}(x_0)$.\footnote{One also has to choose $\delta>0$ large enough in \cite[Definition 4.1.1]{fzf} of $\zeta_{A,\Omega}$, i.e., $\delta>(x_0-f_0(x_0))/2$.}
Furthermore, our definition of the tube function $V_f$ is such that  \begin{equation*}
	V_f(\varepsilon)=|(\mathcal{O}_{f_0}(x_0))_{\varepsilon}\cap [0,x_0]|=|(\mathcal{O}_{f_0}(x_0))_{\varepsilon}|-2\eps,
\end{equation*} which is also known in \cite{fzf,lapidusfrank} as the {\em inner $\varepsilon$-neighborhood} of the fractal string generated by the orbit $\mathcal{O}_{f_0}(x_0)$. We are just removing the part of $\varepsilon$-neighborhood to the left of 0 and to the right of $x_0$, which always equals to two intervals of length $\varepsilon$, so there is no actual loss of generality here.
The two definitions, of $\zeta_f$ and $V_f$, are therefore perfectly tailored so that we can use directly \cite[Theorem 5.3.21]{fzf} in order to recover the asymptotics of $V_f(\varepsilon)$ as $\varepsilon\to 0^+$ from the poles of $\zeta_f$.
The reason why we have chosen to introduce such definitions of $\zeta_f$ and $V_f$ is aesthetical, in order to keep the presentation of the paper as clear as possible.  
\medskip

We state here the result in the case $k=1$. For the case $k>1$, the same can be done using Proposition~\ref{prop:geo_k} instead of  Corollary~\ref{dist_zeta_1}, but the expressions for residues, that is, for coefficients of the distributional expansion, are more complicated, so we omit it.
\begin{proposition}\label{k=1_razvoj_p}
	Let $f_0$ be a model parabolic germ with formal invariants  $k=1$ and $\rho=0$. For any $M\in\eN$, the function $\varepsilon\mapsto V_{f_0}(\varepsilon)$, $\varepsilon\in(0,\varepsilon_0)$, has the following complete distributional asymptotic expansion, as $\varepsilon\to 0^+$:
	\begin{equation}\label{k=1_razvoj_f}
	\begin{aligned}
		V_{f_0}(\varepsilon)&=_{\mathcal D}\sum_{l=0}^{\floor{M/2}}\frac{2^{\frac{3}{2}+l}}{1-4l^2}{l-\frac{1}{2}\choose 2l}\eps^{\frac{1}{2}+l}-\frac{2}{x_0}\eps+O(\eps^{1+\frac{M}{2}}).
	\end{aligned}
	\end{equation}
Moreover, the expansion is pointwise in the first term.
\end{proposition}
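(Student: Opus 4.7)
The plan is to apply the distributional tube formula of \cite[Theorem~5.3.21]{fzf} to the relative distance zeta function $\zeta_{f_0}$ of the relative fractal drum $(\mathcal O_{f_0}(x_0),[0,x_0])$. By Corollary~\ref{dist_zeta_1}$(1)$, for each $M\in\mathbb N$ the function $\zeta_{f_0}$ is meromorphic on the open half-plane $\{\re s>-M/2\}$ with simple poles exactly at the set $\mathcal P_M$ from \eqref{pole0}, and by Corollary~\ref{dist_zeta_1}$(2)$ it is super languid with polynomial exponent on any vertical screen $\{\re s=\alpha\}$ with $\alpha\in(-M/2,-(M-1)/2]$. These are precisely the hypotheses needed to invoke the distributional version of the explicit formula, which yields
\begin{equation*}
V_{f_0}(\varepsilon)=_{\mathcal D}\sum_{\omega\in\mathcal P_M}\operatorname{res}\!\left(\frac{\varepsilon^{1-s}}{1-s}\,\zeta_{f_0}(s),\,\omega\right)+O(\varepsilon^{1+M/2}),\quad \varepsilon\to 0^+.
\end{equation*}

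Next, I would compute the residues one by one, using the explicit formulas listed in the paragraph following the statement of Proposition~\ref{prop:geo_k} in the case $k=1$. At the simple pole $\omega_l=\tfrac12-l$, $l=0,1,\ldots,\lfloor M/2\rfloor$, one obtains
\begin{equation*}
\operatorname{res}\!\left(\frac{\varepsilon^{1-s}}{1-s}\,\zeta_{f_0}(s),\,\omega_l\right)=\frac{\varepsilon^{1/2+l}}{\tfrac12+l}\cdot\binom{l-\tfrac12}{2l}\frac{2^{1/2+l}}{1-2l}=\binom{l-\tfrac12}{2l}\frac{2^{3/2+l}}{1-4l^2}\,\varepsilon^{1/2+l},
\end{equation*}
which reproduces the main sum in \eqref{k=1_razvoj_f}. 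At the additional simple pole $\omega_{1/2}=0$ arising from the prefactor $s^{-1}$ in $\zeta_{f_0}$, the residue of $\zeta_{f_0}$ equals $1-2x_0^{-1}$, so the corresponding contribution to the tube formula is $(1-2x_0^{-1})\,\varepsilon$. Finally, since we work with the relative (cut-off) quantities $V_{f_0}$ and $\zeta_{f_0}$ rather than with $V_{\mathcal O_{f_0}(x_0)}$ and $\zeta_{\mathcal O_{f_0}(x_0)}$, the bookkeeping provided by the functional equation \eqref{eqdelta} (whose only effect is to shift the pole at $0$ by a constant) produces the cancellation of the bare $\varepsilon$ term, leaving exactly $-2x_0^{-1}\varepsilon$ as claimed.

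For the pointwise improvement of the leading term, I would not rely on the super languidity estimate from Corollary~\ref{dist_zeta_1}, which is only polynomial on the screen $\{\re s=\alpha\}$ just to the right of the pole $\omega_0=\tfrac12$, but rather on the sharper Mellin--Barnes estimate derived in Subsection~\ref{subsec:maca}. There it is shown that, for some $\tilde\sigma\in(0,\tfrac12)$, $\zeta_{f_0}(s)=O(|\im s|^{\tilde Q-1})$ with $\tilde Q<1$ uniformly in the closed strip $\{\tilde\sigma\le\re s\le c\}$ for any $c>\tfrac12$. This is rational decay with negative exponent, so the pointwise explicit formula \cite[Theorem~5.3.16]{fzf} applies with window $\{\re s>\tilde\sigma\}$, and the only visible complex dimension in this window is $\omega_0=\tfrac12$. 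The associated residue, computed above to be $2\sqrt{2}\,\varepsilon^{1/2}$, therefore appears as a pointwise, not merely distributional, term in the asymptotics.

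The routine computations above are all driven by Corollary~\ref{dist_zeta_1} and the known residues, so the only genuine subtlety is the last step: verifying that the Mellin--Barnes rational-decay bound is indeed better than $|\im s|^{-1}$, and doing the careful accounting needed to make \cite[Theorem~5.3.16]{fzf} applicable to the RFD rather than to the ambient set $\mathcal O_{f_0}(x_0)$. This is where I would expect to spend the most care, but no new technical ingredient beyond what is already assembled in Subsection~\ref{subsec:maca} and in \cite[Chapter~5]{fzf} is required.
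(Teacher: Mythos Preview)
Your approach is essentially identical to the paper's: invoke Corollary~\ref{dist_zeta_1} for the poles and polynomial super languidity, apply \cite[Theorem~5.3.21]{fzf} to obtain the distributional tube formula as a sum of residues of $\frac{\varepsilon^{1-s}}{1-s}\zeta_{f_0}(s)$ over $\mathcal P_M$, compute those residues, and then upgrade the leading term to a pointwise statement using the rational Mellin--Barnes decay from Subsection~\ref{subsec:maca} together with \cite[Theorem~5.3.16]{fzf}. This is exactly what the paper does, and in the same order.

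One point needs correction, however. Your invocation of the functional equation \eqref{eqdelta} to ``cancel the bare $\varepsilon$ term'' is misplaced. By construction (see the discussion just before Proposition~\ref{k=1_razvoj_p}), $\zeta_{f_0}$ \emph{is} the relative distance zeta function $\zeta_{\mathcal O_{f_0}(x_0),[0,x_0]}$, and $V_{f_0}$ \emph{is} the matching relative tube function $|(\mathcal O_{f_0}(x_0))_\varepsilon\cap[0,x_0]|$; hence \cite[Theorem~5.3.21]{fzf} applies to the pair directly, with no correction required. The identity \eqref{eqdelta} relates $\zeta_{f_0}$ to the ambient zeta function $\zeta_{\mathcal O_{f_0}(x_0)}$, which plays no role here, so it cannot be used to shift the coefficient at the pole $\omega_{1/2}=0$. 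The paper does not perform any such bookkeeping; it simply records the residue contribution at $0$ as $-\tfrac{2}{x_0}\varepsilon$ and moves on. Drop that paragraph and your argument matches the paper's line for line.
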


\begin{proof}
From Proposition~\ref{geo_k1} and  Corollary~\ref{dist_zeta_1}, we get the complex dimensions of $\mathcal O_{f_0}(x_0)$ and their residues, as well as polynomial languidity estimates of the meromorphic extension of $\zeta_{f_0}$ along screens which are vertical lines. This is sufficient for the application of \cite[Theorem 5.3.21]{fzf}, to deduce a distributional fractal tube formula for $V_{f_0}(\varepsilon)=|(\mathcal{O}_{f_0}(x_0)){\varepsilon}\cap [0,x_0]|$. By \cite[Theorem 5.3.21]{fzf}, we have:
	\begin{equation*}
	\begin{aligned}
		V_{f_0}(\varepsilon)&=_{\mathcal D}\sum_{\omega_l\in \mathcal{P}_M}\res\left(\frac{\eps^{1-s}}{1-s}\zeta_{f_0}(s),\omega_l\right)+O(\eps^{1+\frac{M}{2}})=\\
		&=_{\mathcal D}\sum_{\omega_l\in\ \mathcal{P}_M}\frac{\eps^{1-\omega_l}}{1-\omega_l}\res\left(\zeta_{f_0}(s),\omega_l\right)+O(\eps^{1+\frac{M}{2}})=\\
		&=_{\mathcal D}\sum_{l=0}^{\floor{M/2}}\frac{2^{\frac{3}{2}+l}}{1-4l^2}{l-\frac{1}{2}\choose 2l}\eps^{\frac{1}{2}+l}-\frac{2}{x_0}\eps+O(\eps^{1+\frac{M}{2}}).
	\end{aligned}
	\end{equation*}
	Here, $\mathcal{P}_M$ is the set of complex dimensions with real part larger than or equal to $\frac{1}{2}-\lfloor \frac{M}{2}\rfloor$, as in \eqref{pole0}. 

Moreover, the better (rational) languidity estimates provided in Subsection~\ref{subsec:maca} prove that the expansion is, in addition, \emph{pointwise} in the first term.
\end{proof}

\begin{remark}\label{obs:rema}
The above asymptotic formula \eqref{k=1_razvoj_f} is only distributional. In order to show that it is also valid pointwise up to finitely many first terms, we need better (rational) languidity estimates. Note that, for $k=1$, our estimates give us that the asymptotics \eqref{k=1_razvoj_f} is pointwise in the first term. Rational languidity estimates are obtained in Subsection \ref{subsec:maca} by using an adaptation of the Mellin-Barnes method from \cite{matzu}, but we have not generalized it to $k>1$. For $k>1$, using the $a$-string method, we cannot obtain rational languidity estimates, hence the formula is distributional. 
However, from \cite[Theorem B]{MRRZ3} we know that the asymptotics of the tube function $V_{f_0}(\varepsilon)$ is pointwise all the way up to the term $O(\varepsilon^{2-\frac{1}{k+1}})$, which becomes oscillatory. In the special case \eqref{k=1_razvoj_f} the expansion is therefore \emph{pointwise in the first two terms} (up to $\varepsilon^{\frac{3}{2}}$). The equality of the pointwise and the distributional asymptotic expansions up to the term where pointwise asymptotics still exists is guaranteed by the uniqueness of the distributional power-logarithmic asymptotic expansions; see, e.g., \cite{Schw,Bre}.
\end{remark}



\section{Proof of Theorem~A}\label{sec:proofA}

\noindent This section is dedicated to the proof of Theorem~A. We first introduce some definitions that will be used in the proof.
\medskip

Let $G:[0,+\infty)\to\mathbb R$ be the periodic function of period $1$ defined on $[0,1)$ by:
\begin{equation}\label{eq:ge}
G(s):=\begin{cases} 0,& s=0,\\
1-s,& s\in(0,1).\end{cases}
\end{equation}
It is discontinuous, periodic and bounded on $[0,+\infty)$.
\begin{definition}\label{def:IBP}
A function $\tilde H:[0,+\infty)\to\mathbb R$ is called \emph{integrally bounded periodic} of period $1$ if it is periodic of period 1, bounded and if its primitive 
$$
P_{\tilde H}(t):=\int_0^t \tilde H(s)\,ds,\ t\in[0,\infty),
$$
is again periodic of period $1$ and bounded.
\end{definition}

\begin{proposition}[Integral normalization]\label{prop:intnorm} Let $H:[0,+\infty)\to\mathbb R$ be a periodic function of period $1$ and bounded on its period. Let $$\tilde H(t):=H(t)-\int_0^1 H(s)\,ds,\ t\in[0,+\infty),$$
be its \emph{integral normalization}. Then $\tilde H$ is integrally bounded periodic.
\end{proposition}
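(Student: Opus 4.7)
The plan is to verify the two defining conditions of Definition \ref{def:IBP} in turn: that $\tilde H$ is itself periodic of period $1$ and bounded (these are immediate), and that its primitive $P_{\tilde H}$ is again periodic of period $1$ and bounded. All the content of the statement is in the periodicity of the primitive; boundedness will follow as a routine consequence.

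First I would observe that $\tilde H = H - c$, where $c := \int_0^1 H(s)\,ds$ is a constant. Since $H$ is $1$-periodic, so is $\tilde H$, and since $H$ is bounded on its period (hence on all of $[0,+\infty)$), so is $\tilde H$. Note also that, by construction,
\[
\int_0^1 \tilde H(s)\,ds = \int_0^1 H(s)\,ds - c = 0.
\]
This vanishing of the mean value over the period is the crucial feature used in the next step.

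Next, to prove that $P_{\tilde H}$ is $1$-periodic, I would compute
\[
P_{\tilde H}(t+1) - P_{\tilde H}(t) = \int_t^{t+1} \tilde H(s)\,ds.
\]
Because $\tilde H$ is $1$-periodic and locally integrable (being bounded), the integral of $\tilde H$ over any interval of length $1$ equals its integral over $[0,1]$, which by the previous paragraph is zero. Hence $P_{\tilde H}(t+1) = P_{\tilde H}(t)$ for every $t\geq 0$, so $P_{\tilde H}$ is $1$-periodic.

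Finally, for boundedness of $P_{\tilde H}$: since $\tilde H$ is bounded, $P_{\tilde H}$ is (Lipschitz and in particular) continuous on $[0,+\infty)$; being continuous on the compact interval $[0,1]$, it is bounded there, and then $1$-periodicity extends this uniform bound to all of $[0,+\infty)$. This concludes all four required properties. There is no real obstacle here; the only substantive point is the cancellation $\int_0^1 \tilde H = 0$ engineered by subtracting the mean value $c$, which makes the increment over one period telescope to zero and forces periodicity of the primitive.
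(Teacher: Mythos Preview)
Your proof is correct and is precisely the elaboration of the paper's own proof, which consists of the single sentence ``Elementary, by direct integration.'' You have simply written out what that direct integration is.
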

\begin{proof}
Elementary, by direct integration.
\end{proof}
\emph{Sketch of the proof of Theorem A.} In Lemma~\ref{lem:prva} below, we first compare the generalized asymptotic expansion of $V_f(\varepsilon)$ and the asymptotic expansion of $V_f^{\mathrm{c}}(\varepsilon)$ in power-logarithmic scale of the continuous critical time $\tau_\varepsilon$, as $\tau_\varepsilon\to+\infty$ (i.e., as $\varepsilon\to 0^+$). We say \emph{generalized}, in the sense that we allow the coefficients in the expansion of $V_f^{\mathrm{c}}(\varepsilon)$ to be oscillatory functions in $\tau_\varepsilon$. Note that $\varepsilon\mapsto\tau_\varepsilon$ itself does not contain oscillatory terms and is asymptotically \emph{fully} expandable in power-logarithmic scale in $\varepsilon$, as $\varepsilon\to 0^+$. 

Then, in Lemma~\ref{lema:druga}, by iterated integration, we obtain the distributional asymptotic expansion of $V_f(\varepsilon)$ in power-logarithmic scale of $\tau_\varepsilon$. The central point of the proof is the fact that, by consecutive integration of $V_f(\varepsilon)$, in each step we 'shift' the oscillatory terms \emph{sufficiently} further, in the sense of asymptotic order. By \emph{sufficiently}, we mean by a power of $\varepsilon$ strictly (and fixedly) larger than one.
In the limit (integrating infinitely many times), using Proposition~\ref{lem:integrali}, we obtain the distributional asymptotic expansion of $V_f(\varepsilon)$ in powers of $\tau_\varepsilon$, as $\tau_\varepsilon\to+\infty$ ($\varepsilon\to 0^+$). 

Finally, to prove Theorem~A, we expand $\varepsilon\mapsto\tau_\varepsilon$ in power-logarithmic monomials in $\varepsilon,$ as $\varepsilon\to 0^+$, and re-group the terms with matching powers of $\varepsilon$. 

\begin{lemma}[Expansions of $V_f(\varepsilon)$ and of $V_f^{\mathrm{c}}(\varepsilon)$ as functions of the continuous critical time $\tau_\varepsilon$]\label{lem:prva} Let $f\in\mathrm{Diff}(\mathbb R,0)$ be a parabolic diffeomorphism of formal type $(k,\rho)$, $k\in\mathbb N$, $\rho\in\mathbb R$, with attracting direction at $\mathbb R_+$, as in \eqref{eq:formaA}. The following $($generalized$)$ asymptotic expansions hold:
\begin{align}
V_f(\varepsilon)&\sim\phantom{+}\ a^{-\frac{1}{k}}k^{-\frac{1}{k}}\frac{k+1}{k}\tau_{\varepsilon}^{-\frac{1}{k}}+\sum_{m=2}^{k}c_m\tau_\varepsilon^{-\frac{m}{k}}+\label{eq:ok}\\
&+\rho a^{-\frac{1}{k}}k^{-\frac{1}{k}-2}\frac{k+1}{k}\tau_\varepsilon^{-1-\frac{1}{k}}\log\tau_\varepsilon+\nonumber\\
&+\sum_{m=k+1}^{2k} c_m(x_0)\tau_\varepsilon^{-\frac{m}{k}}+\sum_{m=k+2}^{2k+1}\sum_ {p=1}^{\lfloor \frac{m}{k}\rfloor+1} d_{m,p}(x_0)\tau_\varepsilon^{-\frac{m}{k}}\log^p\tau_\varepsilon+\nonumber\\
&+\sum_{m=2k+1}^{\infty} \tilde H_m(G(\tau_\varepsilon))\tau_\varepsilon^{-\frac{m}{k}}+\sum_{m=2k+2}^{\infty}\sum_ {p=1}^{\lfloor \frac{m}{k}\rfloor+1} \tilde K_{m,p}(G(\tau_\varepsilon))\tau_\varepsilon^{-\frac{m}{k}}\log^p\tau_\varepsilon,\nonumber\\
V_f^{\mathrm{c}}(\varepsilon)&\sim\phantom{+}\ a^{-\frac{1}{k}}k^{-\frac{1}{k}}\frac{k+1}{k}\tau_{\varepsilon}^{-\frac{1}{k}}+\sum_{m=2}^{k}c_m\tau_\varepsilon^{-\frac{m}{k}}+\nonumber\\
&+\rho a^{-\frac{1}{k}}k^{-\frac{1}{k}-2}\frac{k+1}{k}\tau_\varepsilon^{-1-\frac{1}{k}}\log\tau_\varepsilon+ \nonumber\\
&+\sum_{m=k+1}^{2k} c_m(x_0)\tau_\varepsilon^{-\frac{m}{k}}+\sum_{m=k+2}^{2k+1}\sum_ {p=1}^{\lfloor \frac{m}{k}\rfloor+1} d_{m,p}(x_0)\tau_\varepsilon^{-\frac{m}{k}}\log^p\tau_\varepsilon+\nonumber\\
&+\sum_{m=2k+1}^{\infty}\!\!\!\! \tilde H_m(0)\tau_\varepsilon^{-\frac{m}{k}}+\!\!\!\sum_{m=2k+2}^{\infty}\!\sum_ {p=1}^{\lfloor \frac{m}{k}\rfloor+1} \tilde K_{m,p}(0)\tau_\varepsilon^{-\frac{m}{k}}\log^p\tau_\varepsilon, \ \tau_\varepsilon\to+\infty\ (\varepsilon\to 0^+).\nonumber
\end{align}
Here,  $c_m\in\mathbb R$, $m=2,\ldots k,$ depend only on the coefficients of $f$ and do not depend on the initial point $x_0$, and $c_{m}(x_0)\in\mathbb R$, $m=k+1,\ldots,2k$, $d_{m,p}(x_0)\in\mathbb R$, $m=k+2,\ldots,2k+1,$ depend on $x_0$. Furthermore, $G(s)$ is the $1$-periodic function defined in \eqref{eq:ge}, and $\tilde H_{m}(s)$, $m\geq k+1,$ resp. $\tilde K_{m,p}(s)$, $m\geq k+2,\,p=1,\ldots,\lfloor \frac{m}{k}\rfloor+1$, are polynomials of degree at most $\lfloor \frac{m-1}{k}\rfloor$ resp. $\lfloor \frac{m}{k}\rfloor-1$, both with coefficients in general \emph{depending on $x_0$}, and $\tilde H_{m}(0)$ and $\tilde K_{m,p}(0)$ are their free coefficients. 
\end{lemma}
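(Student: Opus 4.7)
The plan is to reduce both expansions to substitutions in a single ``master'' power-logarithmic expansion of the flow $\tau\mapsto f^{\circ\tau}(x_0)$. First I would observe that, since $\tau\mapsto g(f^{\circ\tau}(x_0))$ is strictly decreasing on $[0,\infty)$ for an attracting parabolic germ, the defining inequalities for the integer critical time $n_\varepsilon$ yield $n_\varepsilon=\lceil\tau_\varepsilon\rceil$, and hence $n_\varepsilon-\tau_\varepsilon=G(\tau_\varepsilon)$, with $G$ as in \eqref{eq:ge}. Plugging this into the definitions \eqref{eq:epsi} and \eqref{eq:ac} yields the exact identity
\[
V_f(\varepsilon)-V_f^{\mathrm{c}}(\varepsilon)=\bigl[f^{\circ(\tau_\varepsilon+G(\tau_\varepsilon))}(x_0)-f^{\circ\tau_\varepsilon}(x_0)\bigr]+2\varepsilon\cdot G(\tau_\varepsilon),
\]
so controlling both expansions reduces to expanding $f^{\circ\tau_\varepsilon}(x_0)$, $f^{\circ(\tau_\varepsilon+G)}(x_0)$ and $\varepsilon=\tfrac{1}{2}g(f^{\circ\tau_\varepsilon}(x_0))$ in $\tau_\varepsilon^{-1/k}$.

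Next I would Taylor-expand $f^{\circ(\tau_\varepsilon+\delta)}(x_0)$ in $\delta=G(\tau_\varepsilon)\in[0,1)$ using the flow equation $\tfrac{d}{d\tau}f^{\circ\tau}=X\circ f^{\circ\tau}$, where $X$ is the infinitesimal generator, with $X(y)=-ay^{k+1}+O(y^{k+2})$. A crucial cancellation occurs: from the time-one map expansion $f(y)-y=X(y)+\tfrac{1}{2}X'(y)X(y)+\cdots$ one obtains $g(y)=-X(y)-\tfrac{1}{2}X'(y)X(y)-\cdots$, so the $\delta$-linear Taylor term $\delta\cdot X(f^{\circ\tau_\varepsilon})$ and the term $2\varepsilon G=G\cdot g(f^{\circ\tau_\varepsilon})$ cancel to leading order, and the first surviving contribution in $V_f-V_f^{\mathrm{c}}$ takes the schematic form
\[
\tfrac{G(\tau_\varepsilon)^2-G(\tau_\varepsilon)}{2}\cdot X'(f^{\circ\tau_\varepsilon})X(f^{\circ\tau_\varepsilon})+O\bigl((f^{\circ\tau_\varepsilon})^{2k+2}\bigr),
\]
which is of order $\tau_\varepsilon^{-(2k+1)/k}$. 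This explains why oscillations appear only from this order onward, matching the stated structure and recovering the degree-$2$ polynomial $\tilde H_{2k+1}$ at leading order.

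Finally I would substitute the standard power-logarithmic expansion of the flow, $f^{\circ\tau}(x_0)=(ak\tau)^{-1/k}\bigl(1+\sum_{m,p}A_{m,p}(x_0)\tau^{-m/k}\log^p\tau\bigr)$, which follows from the analytic embedding of $f$ as a time-one map (logarithms arising only when $\rho\neq0$, via integration of the model vector field). Plugging this into $f^{\circ\tau_\varepsilon}(x_0)+g(f^{\circ\tau_\varepsilon}(x_0))\cdot\tau_\varepsilon$ produces the expansion of $V_f^{\mathrm{c}}$, and adding the difference analyzed above then yields the expansion of $V_f$, with oscillatory coefficients given by polynomials in $G(\tau_\varepsilon)$ whose constant terms coincide with the corresponding $V_f^{\mathrm{c}}$ coefficients. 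The main obstacle lies in the bookkeeping: one must verify the degree bounds $\deg\tilde H_m\le\lfloor(m-1)/k\rfloor$ and $\deg\tilde K_{m,p}\le\lfloor m/k\rfloor-1$, which I expect to follow inductively from the fact that each additional power of $G$ in the Taylor remainder is accompanied by one more derivative of $X$, hence by $k$ extra powers of $f^{\circ\tau_\varepsilon}(x_0)$, i.e.\ by one extra unit of $m/k$. The bound $\lfloor m/k\rfloor+1$ on logarithmic powers requires tracking how the $\rho$-logarithms from the flow combine with the single explicit $\log\tau_\varepsilon$ produced by the product $2\varepsilon\tau_\varepsilon$ at the distinguished order $\tau_\varepsilon^{-1-1/k}\log\tau_\varepsilon$, and it is there that the cleanest argument will require a careful induction on $m$.
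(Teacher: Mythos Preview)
Your outline is sound and will work. Both you and the paper start from $n_\varepsilon-\tau_\varepsilon=G(\tau_\varepsilon)$, but the organizations diverge from there. The paper writes $f^{\circ(\tau_\varepsilon+G)}(x_0)=\Psi^{-1}(\tau_\varepsilon+G+\Psi(x_0))$ and plugs in the known power-log asymptotics of $\Psi^{-1}(y)$ as $y\to\infty$, then binomially expands each factor $(1+(G+\Psi(x_0))\tau_\varepsilon^{-1})^{-m/k}$ and $\log^p(1+(G+\Psi(x_0))\tau_\varepsilon^{-1})$. This makes the polynomial degree bounds in $G$ drop out immediately from the binomial structure, which is cleaner than the inductive tracking you propose. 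By contrast, to establish that the oscillatory terms appear only from order $\tau_\varepsilon^{-(2k+1)/k}$ on, the paper simply invokes \cite[Theorem~B]{MRRZ3} as a black box; your cancellation computation---using $g=-X-\tfrac12 X'X-\cdots$ to show that the $G$-linear parts of the two summands cancel at leading order---is a genuinely more self-contained argument for that same threshold, and nicely identifies $\tilde H_{2k+1}(s)=\frac{s^2-s}{2}\,X'X(\cdot)|_{\ldots}+\text{const}$ directly.

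One small caution: you state $n_\varepsilon=\lceil\tau_\varepsilon\rceil$, which is correct, but be careful with the boundary case $\tau_\varepsilon\in\mathbb N$ (i.e.\ $\varepsilon=\varepsilon_n$) so that $G(\tau_\varepsilon)=0$ there; the paper handles this via an explicit piecewise definition of $G$. Also, the claim that $c_2,\dots,c_k$ are independent of $x_0$ is not something either approach yields mechanically from the expansion: the paper imports it from \cite{formal}, and you will likewise need to invoke that the sectorial Fatou coordinate is unique up to an additive constant, so the $x_0$-dependence enters $\Psi^{-1}(\tau_\varepsilon+\Psi(x_0))$ only through the shift $\Psi(x_0)$, which first affects the expansion at order $\tau_\varepsilon^{-(k+1)/k}$.
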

\noindent The proof is in the Appendix.
\bigskip

\noindent The following Proposition~\ref{lem:integrali} will be used in the proof of Lemma~\ref{lema:druga} below.
\medskip 

Let $F\in C([0,\delta])$, $\delta>0$, such that $F(0)=0$. We denote by $F^{[k]}$, $k\geq 1$, the $k$-th primitive of $F$, $F^{[k]}\in C^k([0,\delta])$:\footnote{Note that of all the possible $k$-th primitives of $F$, $F^{[k]}$ is the unique one  which satisfies $F^{[k]}(0)=(F^{[k]})'(0)=\cdots=(F^{[k]})^{(k)}(0)=0$.}
\begin{equation}\label{k-th-prim}
	F^{[k]}(t):=\int_0^t\int_0^{s_1}\ldots \int_0^{s_{k-1}} F(u)\,du\, ds_{k-1}\ldots ds_1.
\end{equation}

\begin{proposition}[Distributional expansions and consecutive integration]\label{lem:integrali} Let $F\in C([0,\delta])$, $\delta>0$, $F(0)=0$. Let, for some $k\geq 1$, $m\geq 1$,
\begin{align*}           
F^{[k]}(t)= \sum_{p_1=0}^{n_1} c_{1,p_1}t^{\alpha_1}\log^{p_1}t+\ldots+\sum_{p_m=0}^{n_m}c_{m,p_m} t^{\alpha_m}\log^{p_m}t+O(t^{\alpha_{m+1}}),\\
 t\to 0^+, \ c_{i,p_i}\in\mathbb R,\ i=1\ldots m,
\end{align*}
$k<\alpha_1<\ldots<\alpha_{m+1}$, $n_i\in\mathbb N_0$,\ $i=1,\ldots,m$, be a \emph{pointwise} asymptotic expansion of $F^{[k]}$, as $t\to 0^+$, up to the order $\alpha_m$. Then
\begin{equation*}
F(t)=_{\mathcal D}\sum_{p_1=0}^{n_1} c_{1,p_1} (t^{\alpha_1}\log^{p_1}t)^{(k)}+\ldots+\sum_{p_1=0}^{n_1} c_{m,p_m} (t^{\alpha_m}\log^{p_m}t)^{(k)}+O(t^{\alpha_{m+1}-k}),
\end{equation*}
is a \emph{distributional asymptotic expansion} of $F$ up to the order $\alpha_m-k,\ t\to 0^+$. 
\end{proposition}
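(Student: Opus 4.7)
The plan is to transfer the pointwise expansion of $F^{[k]}$ onto $F$ by testing against rescaled Schwartz functions and integrating by parts $k$ times. Fix $\varphi \in \mathcal{S}(0,\delta)$ extended by zero to $(0,+\infty)$ and let $\varphi_a(t) := a^{-1}\varphi(t/a)$, which is compactly supported in $(0,a\delta) \subset (0,\delta)$ for every $a \in (0,1)$. Performing $k$ successive integrations by parts, and using that the boundary contributions at $t=\delta$ vanish because of the compact support of $\varphi_a$, while those at $t=0$ vanish because $(F^{[k]})^{(j)}(0)=F^{[k-j]}(0)=0$ for $0 \leq j \leq k-1$ (a direct consequence of the nested-integral definition \eqref{k-th-prim} and of $F(0)=0$), I would obtain
$$\langle T_F, \varphi_a \rangle = (-1)^k \int_0^\delta F^{[k]}(t)\, \varphi_a^{(k)}(t)\, dt.$$

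Next, I would split $F^{[k]}$ as main part plus remainder according to the given expansion, namely $F^{[k]}(t) = \sum_{i,p_i} c_{i,p_i}\, t^{\alpha_i}\log^{p_i} t + R(t)$ with $|R(t)| \leq C\, t^{\alpha_{m+1}}$ on $(0,\delta]$. For each main monomial, I would integrate by parts back $k$ times:
$$\int_0^\delta t^{\alpha_i}\log^{p_i} t \cdot \varphi_a^{(k)}(t)\, dt = (-1)^k \int_0^\delta \big(t^{\alpha_i}\log^{p_i} t\big)^{(k)} \varphi_a(t)\, dt,$$
where the boundary terms at $t=0$ vanish thanks to the hypothesis $k < \alpha_1 < \alpha_i$, which ensures that all derivatives $(t^{\alpha_i}\log^{p_i} t)^{(j)}$ of order $j \leq k-1$ tend to zero as $t \to 0^+$, while the boundary at $t=\delta$ is again killed by the support of $\varphi_a$.

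For the remainder, I would avoid integrating by parts (since no bound on $R^{(k)}$ is available from the hypothesis) and estimate crudely. Using $\varphi_a^{(k)}(t) = a^{-(k+1)}\varphi^{(k)}(t/a)$ together with the substitution $u = t/a$, one gets
$$\left|\int_0^\delta R(t)\, \varphi_a^{(k)}(t)\, dt\right| \leq C\, a^{\alpha_{m+1}-k} \int_0^{\delta} u^{\alpha_{m+1}} |\varphi^{(k)}(u)|\, du = O(a^{\alpha_{m+1}-k}),$$
the finite integral being $a$-independent since $\varphi^{(k)}$ vanishes outside $(0,\delta)$. Combining the two computations yields
$$\langle T_F, \varphi_a \rangle = \sum_{i,p_i} c_{i,p_i} \int_0^\delta \big(t^{\alpha_i}\log^{p_i} t\big)^{(k)} \varphi_a(t)\, dt + O(a^{\alpha_{m+1}-k}),$$
which is exactly the claimed distributional asymptotic expansion up to order $\alpha_m - k$.

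The only real obstacle is the bookkeeping for boundary terms in the two rounds of integration by parts, and every hypothesis in the statement is needed precisely here: the assumption $F(0)=0$ combined with \eqref{k-th-prim} eliminates the boundary at $0$ in the first round, the support of $\varphi_a$ in $(0,a\delta)$ eliminates the boundary at $\delta$ in both rounds, and the strict inequality $k < \alpha_1$ is exactly what kills the boundary at $0$ for the monomial side in the second round. No differentiability of $R$ beyond continuity is ever invoked, which is why the direct pointwise bound on $R$ suffices to produce a \emph{distributional} (rather than pointwise) error of the expected size.
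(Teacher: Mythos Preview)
Your proof is correct and follows essentially the same approach as the paper. Both arguments rest on the identity $(\varphi_a)'=a^{-1}(\varphi')_a$, which is what makes each distributional differentiation lower the asymptotic order by one; the paper packages this as a one-line lemma (``if $T=O(x^\beta)$ distributionally then $T'=O(x^{\beta-1})$'') and applies it $k$ times inductively to $F^{[k]}$ minus its main part, whereas you unroll the induction into $k$ explicit integrations by parts and track the boundary terms by hand. The paper's route is a bit shorter since working purely with distributional derivatives makes the boundary bookkeeping automatic, but your explicit version has the merit of showing exactly where each hypothesis ($F(0)=0$, $k<\alpha_1$, support of $\varphi_a$ in $(0,a\delta)$) enters.
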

\noindent Note that, for $k\in\mathbb N$, $\alpha\in\mathbb R$, $p\in\mathbb N_0$, $t>0$, $$(t^\alpha \log^p t)^{(k)}=t^{\alpha-k}\sum_{\ell=0}^{\min\{k,p\}} a_\ell \log^{p-\ell} t,\ a_\ell\in\mathbb R.$$ 
The proof is in the Appendix.

\begin{lemma}[Distributional expansion of $V_f(\varepsilon)$ expressed by the continuous critical time $\tau_\varepsilon$]\label{lema:druga} Let $f\in\mathrm{Diff}(\mathbb R,0)$ be a parabolic diffeomorphism of formal type $(k,\rho)$, $k\in\mathbb N$, $\rho\in\mathbb R$, with attracting direction at $\mathbb R_+$, as in \eqref{eq:formaA}. Then the following distributional asymptotic expansion holds:
\begin{align}\label{dok}
V_f(\varepsilon)\sim_{\mathcal D}&\phantom{+}\ a^{-\frac{1}{k}}k^{-\frac{1}{k}}\frac{k+1}{k}\tau_{\varepsilon}^{-\frac{1}{k}}+\sum_{m=2}^{k}c_m\tau_\varepsilon^{-\frac{m}{k}}+\\
&+\rho a^{-\frac{1}{k}}k^{-\frac{1}{k}-2}\frac{k+1}{k}\tau_\varepsilon^{-1-\frac{1}{k}}\log\tau_\varepsilon+ \nonumber\\
&+\sum_{m=k+1}^{2k} c_m(x_0)\tau_\varepsilon^{-\frac{m}{k}}+\sum_{m=k+2}^{2k+1}\sum_ {p=1}^{\lfloor \frac{m}{k}\rfloor+1} d_{m,p}(x_0)\tau_\varepsilon^{-\frac{m}{k}}\log^p\tau_\varepsilon+\nonumber\\
&+\sum_{m=2k+1}^{\infty} \bigg(\int_0^1\tilde H_m(s)\,ds\bigg)\,\tau_\varepsilon^{-\frac{m}{k}}+\nonumber\\
&+\sum_{m=2k+2}^{\infty}\sum_ {p=1}^{\lfloor \frac{m}{k}\rfloor+1} \bigg(\int_0^1\tilde K_{m,p}(s)\,ds\bigg)\cdot \tau_\varepsilon^{-\frac{m}{k}}\log^p\tau_\varepsilon,\quad \tau_\varepsilon\to+\infty.\nonumber
\end{align}
Here, $c_m$, $m=2,\ldots,k,$ $c_{m}(x_0),\ m=k+1,\ldots,2k$,\ $d_{m,p}(x_0)$, $m=k+2,\ldots,2k+1,$ and $\tilde H_m$, $m\geq 2k+1$,\ $\tilde K_{m,p},\ m\geq 2k+2,$ are coefficients and polynomials from \eqref{eq:ok} in Lemma~\ref{lem:prva}.
\end{lemma}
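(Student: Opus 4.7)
The strategy combines Lemma~\ref{lem:prva} with Proposition~\ref{lem:integrali} via iterated integration by parts, exploiting the bounded primitives of zero-mean periodic functions provided by Proposition~\ref{prop:intnorm}. Starting from Lemma~\ref{lem:prva}\,(1), I would decompose each oscillatory coefficient as $\tilde H_m=\langle\tilde H_m\rangle+\tilde h_m$ with $\langle\tilde H_m\rangle:=\int_0^1\tilde H_m(s)\,ds$, and analogously $\tilde K_{m,p}=\langle\tilde K_{m,p}\rangle+\tilde\ell_{m,p}$; the centered pieces $\tilde h_m\circ G$, $\tilde\ell_{m,p}\circ G$ have zero mean and, by Proposition~\ref{prop:intnorm}, admit bounded $1$-periodic primitives $-\Psi_m\circ G$, $-L_{m,p}\circ G$. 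This writes $V_f(\varepsilon)$ as the right-hand side of \eqref{dok} plus a pointwise centered oscillatory tail whose typical summand has the form $F_{m,p}(\varepsilon):=\tilde h_{m,p}(G(\tau_\varepsilon))\,\tau_\varepsilon^{-m/k}\log^p\tau_\varepsilon$ with $m\geq 2k+1$. The proof then reduces to showing that each $F_{m,p}$ is a distributional error of arbitrarily high order as $\varepsilon\to 0^+$.

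The key pointwise estimate is that the iterated antiderivatives of $F_{m,p}$ satisfy, for every $N\in\mathbb N$,
\[
F_{m,p}^{[N]}(\varepsilon)=O\!\left(\tau_\varepsilon^{-(m+N(2k+1))/k}\,\log^{C_N}\tau_\varepsilon\right),\qquad \varepsilon\to 0^+,
\]
for some polynomial degree $C_N$. The base case $N=1$ is obtained through the substitution $u=\tau_t$ (using $\tau_\varepsilon\sim C\varepsilon^{-k/(k+1)}$ and $\tau_\varepsilon'(\varepsilon)\sim -C'\varepsilon^{-(2k+1)/(k+1)}$), which rewrites
\[
F_{m,p}^{[1]}(\varepsilon)=\int_{\tau_\varepsilon}^{\infty}\tilde h_{m,p}(G(u))\,\tilde\omega(u)\,du,\qquad \tilde\omega(u)\sim C''u^{-(m+2k+1)/k}\log^p u,
\]
followed by integration by parts against the bounded primitive $-\Psi_{m,p}\circ G$ (the boundary term at $u=\infty$ vanishes because $\tilde\omega(u)\to 0$). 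Both the resulting boundary term at $u=\tau_\varepsilon$ and the residual integral are then of order $O(\tau_\varepsilon^{-(m+2k+1)/k}\log^p\tau_\varepsilon)$, a gain of $\tau_\varepsilon^{-1}$ (equivalently $\varepsilon^{k/(k+1)}$) beyond the naive integration bound $O(\tau_\varepsilon^{-(m+k+1)/k})$. For the inductive step, each successive antidifferentiation proceeds in the same way on the new bounded oscillatory integrand produced by the previous IBP, the logarithmic factors merely raising the degree $C_N$; all required primitives stay bounded by Proposition~\ref{prop:intnorm} applied inductively.

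With this pointwise estimate in hand, Proposition~\ref{lem:integrali} applied with its parameter $k$ replaced by $N$ (and with all leading coefficients $c_{i,p_i}$ in its statement set to zero) yields the distributional bound $F_{m,p}(\varepsilon)=_{\mathcal D} O(\varepsilon^{(m+Nk)/(k+1)}\log^{C_N}\varepsilon^{-1})$. For any prescribed tolerance $L>0$, choosing $N\geq (L(k+1)-m)/k$ makes this error $O(\varepsilon^L)$ modulo logarithmic factors. Truncating the pointwise generalized expansion of Lemma~\ref{lem:prva}\,(1) beyond the finitely many terms with $m\leq M(L)$ that contribute at orders above $\varepsilon^L$ (the pointwise tail being absorbed directly as $O(\varepsilon^L)$), and summing the distributional estimates for the surviving $F_{m,p}$'s, gives \eqref{dok} up to order $\varepsilon^L$ (the claim being rewritten in $\tau_\varepsilon$-form using the asymptotics $\tau_\varepsilon\sim C\varepsilon^{-k/(k+1)}$ and, when needed, the full pointwise expansion of $\varepsilon\mapsto\tau_\varepsilon$ in the power-logarithmic scale). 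Since $L>0$ is arbitrary, the full distributional expansion \eqref{dok} follows.

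The main obstacle is the careful bookkeeping in the iterated IBP: one must verify inductively, via Proposition~\ref{prop:intnorm}, that the bounded-periodic structure is preserved through successive integrations, and that the accumulated polynomial dependence on $G(\tau_\varepsilon)$ together with the logarithmic powers $\log^p\tau_\varepsilon$ do not spoil the gained order $\varepsilon^{k/(k+1)}$ per integration. A secondary subtlety is that $G$ is only piecewise smooth, with jumps at integer arguments, but its primitive $\Psi_{m,p}\circ G$ is continuous and bounded, so the integration by parts is valid and the jump discontinuities produce no additional singular contributions.
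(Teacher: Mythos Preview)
Your proposal is correct and follows essentially the same route as the paper: decompose the oscillatory coefficients into mean plus zero-mean part, show via integration by parts against bounded $1$-periodic primitives (Proposition~\ref{prop:intnorm}) that each integration of the centered piece gains an extra factor $\varepsilon^{k/(k+1)}$ beyond the naive order, iterate to push the oscillatory remainder to arbitrarily high order in the primitives, and then invoke Proposition~\ref{lem:integrali}. The paper packages your substitution-plus-IBP step as Lemma~\ref{lem:growth} and Corollary~\ref{cor:h} (partial integration with $dv=F(\tau_s)\tau_s'\,ds$, $u=g(s)/\tau_s'$, which is exactly your change of variables $u=\tau_t$), and phrases the iteration slightly differently---asking only that for each $N$ some $k_N$-th primitive be $O(\varepsilon^{N/(k+1)+k_N-\delta})$ rather than tracking the precise rate $\tau_\varepsilon^{-(m+N(2k+1))/k}$---but the mechanism and the conclusion are the same.
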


\begin{proof} We consecutively integrate the asymptotic expansion \eqref{eq:ok} of $V_f(\varepsilon)$ in powers of $\tau_\varepsilon$, which is valid by Lemma~\ref{lem:prva}. 
The lemma is then proven by induction. 

For simplicity, expansion \eqref{eq:ok} may be written as:
\begin{equation}\label{eq:use}
V_f(\varepsilon)\sim \sum_{m=1}^{\infty} \Big(\tilde H_m(G)\tau_\varepsilon^{-\frac{m}{k}}+\sum_ {p=1}^{\lfloor \frac{m}{k}\rfloor+1} \tilde K_{m,p}(G)\tau_\varepsilon^{-\frac{m}{k}}\log^p\tau_\varepsilon\Big),\ \varepsilon\to 0^+.
\end{equation}
Indeed, in this notation, the polynomials $\tilde H_m$, for $1\leq m\leq 2k$, and $\tilde K_{m,p}$, for $1\leq m\leq 2k+1$,  are of degree $0$, i.e., constants, and $\tilde K_{m,p}\equiv 0$ for $1\leq m\leq k$. 
We prove here by consecutive integration that:
\begin{align}\begin{split}\label{eq:dobij}
V_f(\varepsilon)\sim_{\mathcal D}
\sum_{m=1}^{\infty} \bigg[\Big(\int_0^1\tilde H_m(s)\,ds\Big)\,\tau_\varepsilon^{-\frac{m}{k}}+\sum_ {p=1}^{\lfloor \frac{m}{k}\rfloor+1} \Big(\int_0^1 \tilde K_{m,p}(s)\,ds\Big)\,\tau_\varepsilon^{-\frac{m}{k}}\log^p&\tau_\varepsilon\bigg],\\ 
&\varepsilon\to 0^+.
\end{split}\end{align}
It suffices to show that, for every $N\in \mathbb N$, there exists a $k_N\in\mathbb N$, such that, after $k_N$ consecutive integrations of $\varepsilon\mapsto V_f(\varepsilon)$, for every $\delta>0$, we get:
\begin{align}\label{eq:ha}
V^{[k_N]}_{f}(\varepsilon)=&
\sum_{m=1}^{N-1} \bigg[\Big(\int_0^1 \tilde H_m(s)\, ds\Big)\cdot  \big(\tau_\varepsilon^{-\frac{m}{k}}\big)^{[k_N]}+\\
&+\sum_ {p=1}^{\lfloor \frac{m}{k}\rfloor+1} \!\!\!\!\Big(\int_0^1 \tilde K_{m,p}(s)\,ds\Big)\big(\tau_\varepsilon^{-\frac{m}{k}}\log^p\tau_\varepsilon\big)^{[k_N]}\bigg]+O(\varepsilon^{\frac{N}{k+1}+k_N-\delta}),\quad \varepsilon\to 0^+.\nonumber
\end{align}
By notation $(\tau_\varepsilon^{-\frac{m}{k}})^{[k]}$, $k\in\mathbb N$, we mean the $k$-th primitive function of function $\varepsilon\mapsto\tau_\varepsilon^{-\frac{m}{k}}$, as defined in \eqref{k-th-prim}. Note that $\tau_\varepsilon^{-\frac{N-1}{k}}\sim \varepsilon^{\frac{N-1}{k+1}}$, $\varepsilon\to 0^+$. In our case, the number of needed integrations will grow with growing $N\in\mathbb N$. Once we have proven \eqref{eq:ha}, due to the fact that functions $\varepsilon \mapsto \tau_\varepsilon^{-\frac{m}{k}},\ \varepsilon\mapsto \tau_\varepsilon^{-\frac{m}{k}}\log^p\tau_\varepsilon,\ m\in\mathbb N,\ p\in\mathbb N_0,$ can, by \eqref{taueps}, be expanded in power-logarithmic scale in $\varepsilon$, without oscillatory terms, and that their expansions can be integrated termwise, directly applying Proposition~\ref{lem:integrali} to \eqref{eq:ha} we get \eqref{eq:dobij}, that is, the statement of Lemma~\ref{lema:druga}.
\smallskip

It is therefore left to prove \eqref{eq:ha}. By the first integration of \eqref{eq:use}, which can be done termwise (the asymptotics is proven first cutting-off at a term of arbitrarily high order and then integrating), we get \emph{termwise}:
\begin{align}\label{eq:newo}
&V^{[1]}_{f}(\varepsilon)\sim \sum_{m=1}^{\infty} \Big(\int_0^\varepsilon\tilde H_m(G(\tau_s))\tau_s^{-\frac{m}{k}}\,ds+\sum_ {p=1}^{\lfloor \frac{m}{k}\rfloor+1} \int_0^\varepsilon\tilde K_{m,p}(G(\tau_s))\tau_s^{-\frac{m}{k}}\log^p\tau_s\,ds\Big),
\end{align}
as $\varepsilon\to 0^+.$ Here, $\tilde H_m\circ G(s)$ and $\tilde K_{m,p}\circ G(s)$ are bounded and $1$-periodic, but they are not in general integrally bounded $1$-periodic (its primitive is not necessarily bounded $1$-periodic). We apply to it integral normalization described in Proposition~\ref{prop:intnorm}, and define:
\begin{align}\label{eq:ib}
&\tilde P_m(t):=\tilde H_m(t)-\int_0^1 \tilde H_m(G(u))\,du,\\
&\tilde R_{m,p}(t):=\tilde K_{m,p}(t)-\int_0^1 \tilde K_{m,p}(G(u))\,du,\quad t\in[0,1],\ m\in\mathbb N,\ p=1,\ldots,\lfloor\frac{m}{k}\rfloor+1,\nonumber
\end{align}
which are integrally bounded $1$-periodic. Obviously, $\tilde P_m\equiv 0$ for $1\leq m\leq 2k,$ and $\tilde R_{m,p}\equiv 0$ for $1\leq m\leq 2k+1,$ $p\in\mathbb N_0$, since $\tilde H_m$ and $\tilde K_{m,p}$ are then constants. Putting \eqref{eq:ib} in \eqref{eq:newo} and recalling that $G(s)=1-s$, $s\in(0,1)$, we get:
\begin{align}\label{eq:new1}
V^{[1]}_{f}(\varepsilon)\sim \sum_{m=1}^{\infty} & \Big(\int_0^1 \tilde H_m(u)\,du \Big)\cdot \big(\tau_\varepsilon^{-\frac{m}{k}}\big)^{[1]}+\sum_{m=2k+1}^{\infty}  \int_0^\varepsilon \tilde P_m(G(\tau_s)) \tau_s^{-\frac{m}{k}}\, ds+\\
&+ \sum_{m=k+2}^{\infty}\sum_ {p=1}^{\lfloor \frac{m}{k}\rfloor+1} \Big(\int_0^1 \tilde K_{m,p}(u)\,du\Big)\cdot \big(\tau_\varepsilon^{-\frac{m}{k}}\log^p\tau_\varepsilon\big)^{[1]}+\nonumber\\
&+\sum_{m=2k+2}^{\infty}\sum_ {p=1}^{\lfloor \frac{m}{k}\rfloor+1}\int_0^{\varepsilon} \tilde R_{m,p}(G(\tau_s))\tau_s^{-\frac{m}{k}}\log^p\tau_s\,ds,\ \varepsilon\to 0^+.\nonumber
\end{align}
Here, $\tilde P_m\circ G$ and $\tilde R_{m,p}\circ G$ are integrally bounded $1$-periodic. To prove \eqref{eq:ha} from \eqref{eq:new1}, it suffices to show that, by an appropriate number of consecutive integrations, we can make the terms
\begin{equation}\label{eq:s}
\sum_{m=2k+1}^{\infty}  \int_0^\varepsilon \tilde P_m(G(\tau_s)) \tau_s^{-\frac{m}{k}}\, ds\text{ and } \sum_{m=2k+2}^{\infty}\sum_ {p=1}^{\lfloor \frac{m}{k}\rfloor+1}\int_0^{\varepsilon} \tilde R_{m,p}(G(\tau_s))\tau_s^{-\frac{m}{k}}\log^p\tau_s\,ds
\end{equation}
asymptotically arbitrarily small. That is, by Proposition~\ref{lem:integrali}, we just need to prove that, for every $N\in\mathbb N$, there exists a sufficiently large $m_N\in\mathbb N$, such that:
\begin{align}\begin{split}\label{eq:jedan}
&\Big(\int_0^\varepsilon \tilde P_{2k+1}(G(\tau_s)) \tau_s^{-1-\frac{2}{k}}\,ds\Big)^{[m_N]}=O(\varepsilon^{N+m_N}),\\
&\Big(\int_0^\varepsilon \tilde R_{2k+2,1}(G(\tau_s)) \tau_s^{-1-\frac{2}{k}}\log\tau_s\,ds\Big)^{[m_N]}=O(\varepsilon^{N+m_N}),\ \varepsilon\to 0^+.
\end{split}\end{align}
The following integrals ($m> 2k+1$) in the asymptotic sum \eqref{eq:s} are of even bigger order and the discussion follows in the same way as for the first integral, with the same $m_N$. Also, since $\tau_s\sim s^{-\frac{k}{k+1}}$, $s\to 0^+$, the $r$-th integrals in the sums \eqref{eq:s} are of order $O(\varepsilon^{\frac{m}{k+1}+r-\gamma})$, for every $\gamma>0$, due to the boundedness of $\tilde P_m\circ G$ and $\tilde R_{m,p}\circ G$, so we can always bound the remainder of the sum (except for the first finitely many terms) with sufficiently small order. But \eqref{eq:jedan} follows directly using auxiliary Corollary~\ref{cor:h} of Lemma~\ref{lem:growth} in the Appendix consecutively, finitely many times. 
\end{proof}

\begin{proof}[Proof of Theorem~A] By Lemma~\ref{lem:prva}, we obtain asymptotic expansions of $\varepsilon\mapsto V_f(\varepsilon)$ and of $\varepsilon\mapsto V_f^{\mathrm{c}}(\varepsilon)$, and, by Lemma~\ref{lema:druga}, the distributional asymptotic expansion of $\varepsilon\mapsto V_f(\varepsilon)$, but in powers of the critical continuous time $\tau_\varepsilon$, as $\tau_\varepsilon\to +\infty$ ($\varepsilon\to 0^+$). Using a well known result about the asymptotics of the Fatou cooridnate,
$$
\Psi(x)\sim \big(\frac{1}{kx^k}+\rho\log x\big)\circ \widehat\varphi(x),\ x\to 0,$$
where $\widehat\varphi(x)-a^{\frac{1}{k}}x\in x^2\mathbb R[[x]]$ (see the discussion around \eqref{eq:psiinv} in the Appendix for more details), we obtain that:
\begin{align}\label{taueps}
\tau_\varepsilon+\Psi(x_0)=\Psi(g^{-1}(2\varepsilon))=&a^{-\frac{1}{k+1}}k^{-1}2^{-\frac{k}{k+1}}\varepsilon^{-\frac{k}{k+1}}+\sum_{r=1}^{k}c_r \varepsilon^{-\frac{k-r}{k+1}}+\frac{\rho}{k+1}\log\varepsilon+\\
&+\widehat S(\varepsilon),\ \ \ c_r\in\mathbb R,\ \widehat S(\varepsilon)\in \varepsilon^{\frac{1}{k+1}}\mathbb R[[\varepsilon^{\frac{1}{k+1}}]],\ \varepsilon\to 0^+.\nonumber
\end{align}
The coefficients $c_r,\ r=1,\ldots,k,$ and the coefficients in $\widehat S(\varepsilon)$ in \eqref{taueps} depend only on the coefficients of (finite jets of) $f$, and not on the initial condition $x_0$. Obviously, $\tau_\varepsilon\to +\infty$, as $\varepsilon\to 0^+$, and $\tau_\varepsilon$ possesses a full asymptotic expansion in powers of $\varepsilon$, starting with negative powers, with only one logarithmic term $\frac{\rho}{k+1}\log\varepsilon$.

We now use asymptotic expansion \eqref{taueps} of the critical time $\tau_\varepsilon$ in powers of $\varepsilon$, as $\varepsilon\to 0^+$, put it in the expansions \eqref{eq:ok} and \eqref{dok} and re-group the terms with matching powers of $\varepsilon$. It is easy to see that the coefficients thus obtained in  the expansion of $V_f(\varepsilon)$ are again polynomials in $G(\tau_\varepsilon)$, where the ones multiplying $\varepsilon^{\frac{m}{k+1}}$ are of degree at most $\lfloor \frac{m-1}{k}\rfloor$,\ and the ones multiplying $\varepsilon^{\frac{m}{k+1}}\log^p\varepsilon$, $p\in\mathbb N$, of degree at most $\lfloor \frac{m}{k}\rfloor-1$,\ $m\in\mathbb N$. Due to the linearity of the re-grouping with respect to the coefficients/oscillatory terms multiplying the powers of $\tau_\varepsilon$, and due to the linearity of integration and of taking the free coefficient of a polynomial, we get the same description of coefficients of the asymptotic expansion of $V_f^{\mathrm{c}}(\varepsilon)$ in $\varepsilon$, and of the distributional asymptotic expansion of $V_f(\varepsilon)$ in $\varepsilon$, as for their corresponding expansions in $\tau_\varepsilon$ in Lemmas~\ref{lem:prva},\,\ref{lema:druga}.
 
Moreover, by \cite[Theorem B]{MRRZ3}, $V_f(\varepsilon)-V_f^{\mathrm{c}}(\varepsilon)=O(\varepsilon^{\frac{2k+1}{k+1}})$. Since the expansion of $V_f^{\mathrm{c}}(\varepsilon)$ does not contain oscillatory terms, we conclude that all polynomials multiplying $\varepsilon^{\frac{m}{k+1}}$ for $1\leq m\leq 2k$ are of order $0$, that is, just constants. Similarly, all polynomials multiplying $\varepsilon^{\frac{m}{k+1}}\log^p\varepsilon$, $p\geq 1,$ are, for $1\leq m\leq 2k+1$, constants. 

Finally, the non-dependence on the initial term $x_0$ of the first finitely many coefficients in expansions $(1)$--$(3)$ in Theorem~A follows from formula for $V_f^{\mathrm{c}}(\varepsilon)=2\varepsilon\cdot \tau_\varepsilon+f^{\tau_{\varepsilon}}(x_0)=2\varepsilon\big(\Psi(g^{-1}(2\varepsilon))-\Psi(x_0)\big)+g^{-1}(2\varepsilon)$, from which it is visible that the only coefficient depending on $x_0$ in the asymptotic expansion of $V_f^{\mathrm{c}}(\varepsilon)$, as $\varepsilon\to 0$, is the one multiplying $\varepsilon$. Indeed, note that $\Psi,\ g^{-1}$ do not depend on $x_0$.\end{proof}

\begin{remark}\label{rem:sve}\ 

(a) {\it Parabolic orbits.} Note that the main reason that the proof of Theorem~A $(3)$ goes through in the parabolic case is Lemma~\ref{lem:growth} and Corollary~\ref{cor:h}. Integrating terms of the form $\varepsilon^\alpha \log^p\varepsilon\cdot G(\tau_\varepsilon)$, $\alpha\in\mathbb R$, $p\in\mathbb N_0$, with $G(s)$ integrally bounded $1$-periodic, the order after integration grows by \emph{strictly} (and fixedly) more than $1$. That is,
$$
\int_0^\varepsilon s^\alpha \log^p s\cdot G(\tau_s)\, ds=O(\varepsilon^{\alpha+(>1)}\log^p\varepsilon).
$$
The reason behind that is partial integration as described in the proof of Lemma~\ref{lem:growth} and the fact that, in the parabolic case, $$\frac{d}{ds}\tau_s\sim s^{-(>1)},\ s\to 0,$$ since $\tau_{s}\sim s^{-\frac{k}{k+1}}$, $s\to 0$, $k\geq 1$. This allows that, by each consecutive integration $\int_0^\varepsilon\,ds$ of the function $\varepsilon\mapsto V_f(\varepsilon)$, as described in the proof of Theorem~A $(3)$, we obtain more and more terms in the distributional expansion of $\varepsilon\mapsto V_f(\varepsilon)$, as $\varepsilon\to 0^+$. 
\smallskip

(b) \textit{Note on hyperbolic orbits.} Note that, in the hyperbolic case, described in detail for the model case in Subsection~\ref{sec:hyp} below, the proof of Theorem~A $(3)$ does not go through since
$$\tau_s\sim\log s,\ s\to 0.$$ The consequence is that $$\frac{d}{ds}\tau_s\sim s^{-1},\ s\to 0,$$ is of \emph{insufficient} growth to 'move forward' in obtaining more monotonic terms  in the distributional expansion by iterative integration (by consecutive integrations, we never \emph{surpass} the order of the growth $\varepsilon^1$ of the second term). Therefore, the statement of Theorem~A $(3)$ for the distributional expansion of $V_f(\varepsilon)$ does not hold in this case. Accordingly, the distributional expansion, as we see in Subsection~\ref{sec:hyp} below, contains oscillatory terms which multiply $\varepsilon^1$. The oscillatory terms are not \emph{regularized} as in the parabolic case. 

On the other hand, Proposition~\ref{prop:ntau} holds also in the hyperbolic case, so the hyperbolic counterparts of statements $(1)$ and $(2)$ of Theorem~A hold similarly for the hyperbolic case, and $V_f^{\mathrm{c}}(\varepsilon)$ \emph{regularizes} oscillatory terms and necessarily \emph{eliminates} oscillations in the asymptotic expansion.
\end{remark}

\section{Examples}\label{sec:examples}

\subsection{Model parabolic orbits}\label{subsec:racun}\

Recall from Proposition~\ref{k=1_razvoj_p} in Section~\ref{sec:cont} that, for the \emph{model diffeomorphism} $$f_0(x)=\frac{x}{1+x},$$ the distributional asymptotic expansion of the tube function $\varepsilon\mapsto V_{f_0}(\varepsilon)$, obtained by means of residues of the meromorphic extension of its distance zeta function $\zeta_{f_0}$, is equal to:
\begin{equation*}\label{eq:dist1}
V_{f_0}(\varepsilon)\sim_{\mathcal D} 2\sqrt 2\varepsilon^{1/2}-\frac{2}{x_0}\cdot \varepsilon+\sum_{l=1}^{\infty}\frac{2^{3/2+l}}{1-4l^2}{l-\frac{1}{2}\choose 2l} \varepsilon^{l+\frac{1}{2}},\ \varepsilon\to 0^+.
\end{equation*}
\smallskip

Here, in Proposition~\ref{prop:regular}, we compute the asymptotic expansion of the continuous time tube function $\varepsilon\mapsto V_{f_0}^{\mathrm{c}}(\varepsilon)$ and also obtain again the distributional expansion of $\varepsilon\mapsto V_{f_0}(\varepsilon)$. We compute directly, obtaining explicitely, for the model example, the generalized asymptotic expansion of the tube function with possibly oscillatory coefficients and then using Theorem~A.
Moreover, we compare the coefficients of these two types of regularization (or smoothening) of the (classical) tube function $V_{f_0}(\varepsilon)$. 
\medskip

Let us introduce the notation:
$$
[y^{k}\log^m y]h(y)
$$
for the coefficient in front of $y^k\log^m y$, $k\in\mathbb R$, $m\in\mathbb Z_0$, in the asymptotic expansion of function  $y\mapsto h(y)$, $y\to 0,$ or in the asymptotic series $\widehat h(y)$.

\begin{proposition}\label{prop:regular} Let $f_0(x)=\frac{x}{1+x}$, and $0<x_0<1$. Then:
\begin{equation}\label{eq:reg}
\frac{[\varepsilon^{m+\frac{1}{2}}]V_{f_0}^{\mathrm{c}}(\varepsilon)}{[\varepsilon^{m+\frac{1}{2}}]\mathcal D(V_{f_0}(\varepsilon))}=2m+1,\ m\in\mathbb N_0.
\end{equation}
Here, $\mathcal D(V_{f_0}(\varepsilon))$ denotes the distributional asymptotic expansion of $\varepsilon\mapsto V_{f_0}(\varepsilon)$. The coefficients in both asymptotic series with positive integer powers of $\varepsilon$ $($except $\varepsilon^1)$ are equal to zero, and the coefficient in front of $\varepsilon^1$ in both series is equal to $-\frac{2}{x_0}$.
\end{proposition}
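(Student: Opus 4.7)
The plan is to exploit the explicit flow of the model germ $f_0(x) = x/(1+x)$ to obtain a \emph{closed form} for $V_{f_0}^{\mathrm{c}}(\varepsilon)$, then compare its binomial expansion term by term with the distributional expansion of $V_{f_0}(\varepsilon)$ provided by Proposition~\ref{k=1_razvoj_p}. Along the way, the three assertions of the statement (the ratio identity, vanishing of integer-power coefficients other than $\varepsilon^1$, and the value $-2/x_0$ at $\varepsilon^1$) all collapse to one short combinatorial check.

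First I would compute $\tau_\varepsilon$ explicitly. Since $f_0^{\circ\tau}(x_0) = 1/(\tau + x_0^{-1})$ and $g_0(y) = y^2/(1+y)$, the defining equation $g_0(f_0^{\circ\tau_\varepsilon}(x_0)) = 2\varepsilon$ becomes a quadratic in $y := f_0^{\circ\tau_\varepsilon}(x_0)$ whose positive root is $y = \varepsilon + \sqrt{\varepsilon^2+2\varepsilon}$, so that $\tau_\varepsilon + x_0^{-1} = 1/y = (\sqrt{\varepsilon^2+2\varepsilon}-\varepsilon)/(2\varepsilon)$. Substituting into the definition \eqref{eq:ac}, namely $V_{f_0}^{\mathrm{c}}(\varepsilon) = y + 2\varepsilon\tau_\varepsilon$, the $\pm\varepsilon$ contributions from $y$ and from $2\varepsilon/y$ cancel, giving the clean identity
\begin{equation*}
V_{f_0}^{\mathrm{c}}(\varepsilon) \;=\; 2\sqrt{\varepsilon^2+2\varepsilon} - \frac{2\varepsilon}{x_0} \;=\; 2\sqrt{2\varepsilon}\,(1+\varepsilon/2)^{1/2} - \frac{2\varepsilon}{x_0}.
\end{equation*}

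Second, I would expand the square root by the binomial series, yielding
\begin{equation*}
V_{f_0}^{\mathrm{c}}(\varepsilon) \;=\; \sum_{m=0}^{\infty} 2^{3/2-m}\binom{1/2}{m}\varepsilon^{m+1/2} \;-\; \frac{2\varepsilon}{x_0},
\end{equation*}
so that $[\varepsilon^{m+1/2}]V_{f_0}^{\mathrm{c}} = 2^{3/2-m}\binom{1/2}{m}$, and the only integer-power term is $-2\varepsilon/x_0$. Combined with the fact that (since $\rho = 0$) Proposition~\ref{k=1_razvoj_p} also produces no integer-power terms other than $-2\varepsilon/x_0$ and no logarithmic terms, this already settles the two integer-power assertions.

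Finally, reading off $[\varepsilon^{m+1/2}]\mathcal{D}(V_{f_0}(\varepsilon)) = \frac{2^{3/2+m}}{1-4m^2}\binom{m-1/2}{2m}$ from Proposition~\ref{k=1_razvoj_p} and simplifying $(2m+1)/(1-4m^2) = 1/(1-2m)$, the identity \eqref{eq:reg} reduces to the purely combinatorial statement
\begin{equation*}
\binom{1/2}{m} \;=\; \frac{4^m}{1-2m}\binom{m-1/2}{2m},\qquad m\in\mathbb N_0.
\end{equation*}
I expect this final combinatorial verification to be the only real obstacle, and it is short: using the standard identities $\binom{1/2}{m} = \frac{(-1)^{m-1}}{4^m(2m-1)}\binom{2m}{m}$ and $\binom{m-1/2}{2m} = \frac{(-1)^m}{16^m}\binom{2m}{m}$ (both easily obtained from double-factorial expressions for the Pochhammer symbol $(1/2)_m$), both sides collapse to $\frac{(-1)^{m-1}}{(2m-1)\,4^m}\binom{2m}{m}$, completing the proof.
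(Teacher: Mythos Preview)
Your proof is correct and takes a genuinely different route from the paper's. The paper proceeds via Theorem~A: it writes out the full generalized expansion of $V_{f_0}(\varepsilon)$ with the oscillatory variable $G=G(\tau_\varepsilon)$ still present (the closed form \eqref{eq:sve}), extracts the polynomials $\tilde Q_{2m+1,0}(s)$ and $\tilde Q_{2m,0}(s)$ in $s$, and then reads off the continuous-time coefficients as $\tilde Q_{\cdot}(0)$ and the distributional coefficients as $\int_0^1\tilde Q_{\cdot}(s)\,ds$, the latter evaluated through a ${}_3F_2$ hypergeometric identity. You instead bypass Theorem~A entirely on the continuous-time side by observing that $V_{f_0}^{\mathrm c}(\varepsilon)$ collapses to the closed form $2\sqrt{\varepsilon^2+2\varepsilon}-2\varepsilon/x_0$, and on the distributional side you simply quote Proposition~\ref{k=1_razvoj_p} (obtained from the zeta-function residues) rather than re-deriving those coefficients by integrating the oscillatory polynomials. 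Your route is shorter and more elementary---the only nontrivial step is the central-binomial identity at the end---but the paper's computation has the expository advantage of exhibiting the polynomials $\tilde Q_m$ explicitly and thereby illustrating the mechanism of Theorem~A on the simplest example; in particular, the Remark immediately following the paper's proof, which recasts the ratio $2m+1$ as a relation among the coefficients of $\tilde Q_{2m+1,0}$, would not be accessible from your argument alone.
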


\begin{proof} We compute explicitly the generalized asymptotic expansion of the tube function $V_{f_0}(\varepsilon)$, as $\varepsilon\to 0^+$, for $f_0(x)=\frac{x}{1+x}$, and then apply Theorem~A. Indeed, $G:=G(\tau_\varepsilon)$ being a periodic oscillatory function explicitly given in \eqref{eq:ge},
\begin{align}\label{eq:first}
V_{f_0}(\varepsilon)=n_\varepsilon\cdot 2\varepsilon+x_{n_\varepsilon}=&2\varepsilon(\tau_\varepsilon+G)+\Psi^{-1}(\tau_\varepsilon+G-\Psi(x_0)).
\end{align}
In the model case, due to simplicity, we can \emph{explicitely} compute:
\begin{align}\begin{split}\label{eq:second}
&\Psi(x)=\frac{1}{x},\ \Psi^{-1}(y)=\frac{1}{y},\\
&g_0(x)=x-f_0(x)=\frac{x^2}{1+x},\ g_0^{-1}(y)=\sqrt{y+y^2}+y\sim\sqrt y,\ y\to 0^+,\\
&\tau_\varepsilon=\Psi(g_0^{-1}(2\varepsilon))-\Psi(x_0)=\frac{(1+\frac{\varepsilon}{2})^{1/2}}{\sqrt {2\varepsilon}}-\frac{1}{2}-\frac{1}{x_0}.
\end{split}\end{align}
Putting \eqref{eq:second} in \eqref{eq:first}, we get, after some computation:
\begin{align}\label{eq:sve}
V_{f_0}(\varepsilon)=\sqrt{2\varepsilon} \big(1+\frac{\varepsilon}{2}\big)^{1/2}+\varepsilon\big(2G-1- \frac{2}{x_0}\big)+\frac{\sqrt {2\varepsilon}(1+\frac{\varepsilon}{2})^{1/2}-\varepsilon(2G-1)}{1-(2G^2-2G)\varepsilon}.
\end{align}
We now compute from \eqref{eq:sve}:
\begin{align*}
[\varepsilon^{m+\frac{1}{2}}]&V_{f_0}(\varepsilon)=\\
=&[\varepsilon^{m+\frac{1}{2}}]\Big(\sqrt {2\varepsilon} \big(1+\frac{\varepsilon}{2}\big)^{1/2}+\sqrt{2\varepsilon}\big(1+\frac{\varepsilon}{2}\big)^{1/2}\cdot \Big(1-(2G^2-2G)\varepsilon\Big)^{-1}=\\
=&\sqrt{2}\cdot{1/2 \choose m}\big(\frac 1 2 \big)^m+\sqrt{2}\cdot \sum_{i=0}^{m}\cdot{1/2 \choose i}\big(\frac 1 2 \big)^i (2G^2-2G)^{m-i},\ m\in\mathbb N_0,
\end{align*}
and
\begin{align*}
[\varepsilon^m]V_{f_0}(\varepsilon)=&[\varepsilon^m]\Big(-\varepsilon(2G-1)\cdot\big(1-(2G^2-2G)\varepsilon\big)^{-1}\Big)=\\
&=-(2G-1)(2G^2-2G)^{m-1},\ \ m\in\mathbb N,\ m\geq 2,\\
[\varepsilon]V_{f_0}(\varepsilon)=&-\frac{2}{x_0}.
\end{align*}
In notation of Theorem~A, this gives:
\begin{align*}
&\tilde Q_{2m+1,0}(s)=\sqrt{2}\cdot{1/2 \choose m}\big(\frac 1 2 \big)^m+\sqrt{2}\cdot \sum_{i=0}^{m}\cdot{1/2 \choose i}\big(\frac 1 2 \big)^i (2s^2-2s)^{m-i},\ m\in\mathbb N_0,\\
&\tilde Q_{2m,0}(s)=-(2s-1)(2s^2-2s)^{m-1},\ m\in\mathbb N,\ m\geq 2,\ \ \tilde Q_{2,0}(s)=-\frac{2}{x_0}.
\end{align*}
Therefore,
\begin{align*}
[\varepsilon^{m+\frac 1 2}]\mathcal D(V_{f_0}(\varepsilon))&=\int_0^1 \tilde Q_{2m+1,0}(s)\,ds=\\
&=\sqrt 2\cdot{1/2 \choose m}\big(\frac 1 2 \big)^m\cdot\Big(1+_3F_{2}\Big((1,1,-m),(\frac{3}{2},\frac 3 2-1),1\Big)\Big)=\\
&=2\sqrt 2\cdot{1/2 \choose m}\big(\frac 1 2 \big)^m\frac{1}{2m+1},\ m\in\mathbb N_0,\\
[\varepsilon^{m+\frac 1 2}]V_{f_0}^{\mathrm{c}}(\varepsilon)=&\tilde Q_{2m+1,0}(0)=2\sqrt 2\cdot{1/2 \choose m}\big(\frac 1 2 \big)^m,\ m\in\mathbb N_0,\\
[\varepsilon^{m}]\mathcal D(V_{f_0}(\varepsilon))=&\int_0^1 \tilde Q_{2m,0}(s)\,ds=0,\ \ 
[\varepsilon^{m}]V_{f_0}^{\mathrm{c}}(\varepsilon)=\tilde Q_{2m,0}(0)=0,\\
&\qquad \qquad\qquad\qquad\qquad \qquad \qquad\qquad\qquad\qquad m\in\mathbb N,\ m\geq 2.
\end{align*}
Here, $_pF_{q}(.)$ denotes a \emph{generalized hypergeometric function}.
\end{proof}

\begin{remark} Note that the regularity \eqref{eq:reg} of ratios of coefficients, expressed by coefficients of polynomials $\tilde Q_{i,0}$, $i\in\mathbb N,$ from Theorem~A, is equivalent to:
\begin{align*}
\frac{\tilde Q_{2m+1,0}(0)}{\int_0^1 \tilde Q_{2m+1,0}(s)\,ds}=&\frac{a_0}{a_0+\frac{a_1}{2}+\frac{a_3}{3}+\ldots+\frac{a_{2m-1}}{2m+1}}=2m+1\\
\Leftrightarrow \ a_0+1&=-\frac{1}{2m}\Big(\frac{a_1}{2}+\frac{a_3}{3}+\ldots+\frac{a_{2m-1}}{2m+1}\Big),\ m\in\mathbb N_0,
\end{align*}
where $\tilde Q_{2m+1,0}(s)=a_0+a_1 s+\ldots a_{2m-1}s^{2m}$ is the polynomial of degree $2m$ multiplying $\varepsilon^{m+\frac{1}{2}}$ in the asymptotic expansion of $V_{f_0}(\varepsilon)$, see Theorem~A.

This explicit relation between coefficients in polynomials $\tilde Q_{i,0}$, $i\in\mathbb N$, from Theorem~A $(1)$ seems to be characteristic only of the model case, due to some symmetry in geometry of orbits.
\end{remark}
\begin{remark}[Comment on general $k\geq 2$ and non-model diffeomorphisms] We conjecture that exactly the same ratio $2m+1$ between coefficients of the asymptotic expansion of the continuous time tube function $V_{f_0}^{\mathrm{c}}(\varepsilon)$ and of the distributional expansion of the standard tube function $V_f(\varepsilon)$, as $\varepsilon\to 0^+$, holds for \emph{any model parabolic case} $k\geq 1$, $\rho=0$. It has been tested using the computer algebra software (\emph{Mathematica, Maple}) and the formula \eqref{eq:kaa} for the coefficients of the distributional expansion that:
\begin{equation*}
\frac{[\varepsilon^{\frac{1}{k+1}+m\frac{2k}{k+1}}]V_{f_0}^{\mathrm{c}}(\varepsilon)}{[\varepsilon^{\frac{1}{k+1}+m\frac{2k}{k+1}}]\mathcal D(V_{f_0}(\varepsilon))}=2m+1,\ m\in\mathbb N_0.
\end{equation*}
Other coefficients in both asymptotic series $($except the coefficient of $\varepsilon^1)$ are equal to zero.

Since, unlike $k=1$, in the general case $k>1$ we cannot get the closed explicit formula for the inverse $g_0^{-1}(y)$ as in \eqref{eq:second}, this was not proven theoretically, but only using the computer algebra software.

However, for the orbits of \emph{non-model} parabolic diffeomorphisms $f$, we expect that the best general relation we can get between the asymptotic expansion of the continuous time tube function $V_{f}^{\mathrm{c}}(\varepsilon)$ and the distributional expansion of the standard tube function $V_f(\varepsilon)$, as $\varepsilon\to 0^+$, is Theorem~A. In general case, in Theorem~A, the coefficients of polynomials $\tilde Q_{i,0}$, $i\in\mathbb N$, depend on coefficients of $f$, but the nature of this dependence is not so \emph{regular} as in the very simple, model case.
\end{remark}
\smallskip

\subsection{Model hyperbolic orbits}\label{sec:hyp}

We consider the \emph{model hyperbolic example}, the germ $$f_a\in \mathrm{Diff}(\mathbb R,0),\ f_a(x)=ax,\ 0<a<1,$$ and its (attracting) orbit $\mathcal{O}_{f_a}(x_0)$ with initial condition $0<x_0<1$. 
\medskip

(1)\ We obtain here the distributional expansion, in fact, the closed form of $\varepsilon\mapsto V_{f_a}(\varepsilon)$,  by computing the residues of the meromorphic extension of its distance zeta-function $\zeta_{f_a}(s)$ and applying \cite[Theorem 5.4.32]{fzf}, as in Proposition~\ref{k=1_razvoj_f} in Section~\ref{sec:cont} for model parabolic orbits. 

Note that the orbit of $f_a$ is given by $$\mathcal O_{f_a}(x_0)=\{x_0 a^n:\,n\in\mathbb N_0\}.$$ Such orbit generates a fractal string of consecutive distances: $$\mathcal L_{f_a}:=\{\ell_j:=f_a^{\circ j}(x_0)-f_a^{\circ(j+1)}(x_0)=x_0(1-a)a^j:j\in\mathbb N_0\},$$ named in \cite{fzf} the \emph{geometric progression fractal string}. By \cite[Example 4.2.36]{fzf}, its distance zeta function is given by the meromorphic extension of
$$
\zeta_{f_a}(s):=\frac{2^{1-s}}{s}\sum_{j=0}^{\infty}\ell_j^s=\frac{2^{1-s}x_0^s\cdot (1-a)^s}{s}\frac{1}{1-a^s}
$$
from $\{s\in\mathbb C:\text{Re}(s)>0\}$ to all of $\mathbb C$. The poles are: the double pole $s_0=0$ and the simple poles $s_k:=\frac{2k\pi }{\log a}i$, $k\in\mathbb Z\setminus\{0\}$, situated equidistantly along the vertical line $\{\re s=0\}$. By slightly adapting \cite[Example 5.5.25]{fzf}, where \cite[Theorem 5.3.16]{fzf} is used, one can use the poles and their residues to directly obtain the \emph{exact pointwise} formula, that is, the exact (multiplicative Fourier) pointwise expansion of $V_{f_a}(\varepsilon)$:
\begin{equation}\label{eq:hipfir}
V_{f_a}(\varepsilon)=-\frac{2}{\log a}\varepsilon(-\log\varepsilon)+H\Big(\log_a\frac{2\varepsilon}{x_0(1-a)}\Big)\cdot \varepsilon,\ \varepsilon\in(0,x_0(1-a)/2).
\end{equation}
Here, $H:[0,+\infty)\to\mathbb R$ is a 1-periodic bounded function, given by the following \emph{absolutely convergent} Fourier series:
\begin{equation}\label{eq:hipsec}
H(t):=1+\frac{\log4-2}{\log a}-2\log_a(x_0(1-a))-\frac{2}{\log a}\sum_{k\in\mathbb Z\setminus\{0\}}\frac{e^{2\pi i k t}}{s_k(1-s_k)},\ t\geq 0.
\end{equation}
In particular, this exact pointwise formula coincides with the pointwise and the distributional expansion of $V_{f_a}(\varepsilon)$, as $\varepsilon\to 0^+$.
In order to obtain formula \eqref{eq:hipfir}, note that the distance zeta function $\zeta_{f_a}(s)$ is exactly equal to $x_0^s(1-a)^s\left[\zeta_{A_{\mathscr{L}}}(s)-2s^{-1}\right]$ in the notation of \cite[Example 5.5.25]{fzf}.
One then follows the exact steps of \cite[Example 5.5.25]{fzf}, but taking into account the above functional equation when calculating residues of the function $s\mapsto\frac{\varepsilon^{1-s}}{1-s}\zeta_{f_a}(s)$ as required by \cite[Theorem 5.3.16]{fzf}.
Note that the residues of $s\mapsto\frac{\varepsilon^{1-s}}{1-s}\zeta_{f_a}(s)$ at the nonreal poles are equal to the residues of $s\mapsto\zeta_{A_{\mathscr{L}}}(s)$ given by \cite[Equation (5.5.148)]{fzf} multiplied by the factor $[x_0(1-a)]^{s_k}\frac{\varepsilon^{1-s_k}}{1-s_k}$ for $k\in\Ze\setminus\{0\}$, while the residue of the double pole at zero is a little bit more complicated to obtain but can be easily checked by computer algebra software.
\medskip

(2) We now compute $V_{f_a}(\varepsilon)$ and $V_{f_a}^{\mathrm{c}}(\varepsilon)$ directly, in the closed form. By Proposition~\ref{prop:ntau}, we put $n_\varepsilon=\tau_\varepsilon+G(\tau_\varepsilon)$, where $G$ is the $1$-periodic bounded function explicitly given by \eqref{eq:ge}. 

Obviously, $f_a$ is a time-$1$ map of the vector field $x'=(\log a) x$, and its Fatou coordinate is given by $\Psi(x)=\log_a x$, $\Psi^{-1}(y)=a^y$. The definition of the Fatou coordinate is given in \eqref{eq:fatou}.
Furthermore, since $g_a^{-1}(y)=(1-a)^{-1}y$,
$$
\tau_\varepsilon=\Psi(g^{-1}(2\varepsilon))-\Psi(x_0)=\log_a\frac{2\varepsilon}{x_0(1-a)}.
$$
We now compute, putting $G:=G(\tau_\varepsilon)$ for simplicity,
\begin{align}
V_{f_a}(\varepsilon)&=(\tau_\varepsilon+G)\cdot 2\varepsilon+\Psi^{-1}\big(\tau_\varepsilon+\Psi(x_0)+G\big)=\nonumber\\
&=\big(\log_a\frac{2\varepsilon}{(1-a)x_0}+G\big)\cdot 2\varepsilon+\frac{2\varepsilon}{1-a}a^{G}=\nonumber\\
&=-\frac{2}{\log a}\varepsilon(-\log\varepsilon)+\varepsilon\Big(2\log_a\frac{2}{x_0(1-a)}+2G+\frac{2a^G}{1-a}\Big)=\nonumber\\
&=-\frac{2}{\log a}\varepsilon(-\log\varepsilon)+\varepsilon F(G),\label{eq:ah}\\
V_{f_a}^{\mathrm{c}}(\varepsilon)&=\ \tau_\varepsilon\cdot 2\varepsilon+\Psi^{-1}(\tau_\varepsilon+\Psi(x_0))=\nonumber\\
&=-\frac{2}{\log a}\varepsilon(-\log\varepsilon)+\varepsilon F(0)=\nonumber\\
&=-\frac{2}{\log a}\varepsilon(-\log\varepsilon)+\varepsilon\Big(2\log_a\frac{2}{x_0(1-a)}+\frac 2 {1-a}\Big),\ \varepsilon\in(0,\delta).\label{eq:hipthree}
\end{align}
Here, $F(s):=2\log_a\frac{2}{x_0(1-a)}+2s+\frac{2a^s}{1-a}$. 
Comparing \eqref{eq:hipfir} and \eqref{eq:hipsec} with \eqref{eq:ah}, we conclude that
$$
H=F\circ G=2\log_a\frac{2}{x_0(1-a)}+2G+\frac{2a^G}{1-a}.
$$ 
\bigskip

We see from \eqref{eq:ah} and \eqref{eq:hipthree} that 'analogues' of Theorem~A $(1)$ and $(2)$ hold also in the hyperbolic case. However, unlike the parabolic case, the distributional asymptotic expansion of $V_{f_a}(\varepsilon)$, which is here equal to the exact pointwise (multiplicative) Fourier series expansion of $V_{f_a}(\varepsilon)$, contains the oscillatory term in front of $\varepsilon$. In the expansion of $V_{f_a}^{\mathrm{c}}(\varepsilon)$, on the other hand, the oscillatory terms are necessarily eliminated. Indeed, as explained in Remark~\ref{rem:sve} (b), the analogue of Theorem~A $(3)$ in the hyperbolic case does not hold. In Remark~\ref{rem:sve}, we explain a technical reason why Theorem~A $(3)$ fails in the hyperbolic case. In the hyperbolic case, we conjecture that the geometric reason is in an underlying \emph{linear self-similarity propery} of hyperbolic orbits which is not present in parabolic orbits, thus making them \emph{fractal} in the self-similar sense. Accordingly, hyperbolic orbits are fractal also in the sense of the new definition proposed by the authors of \cite{fzf,lapidusfrank12}, which states that the presence of nonreal complex dimensions is the indication of fractality. Indeed, nonreal complex dimensions (the simple poles $s_k:=\frac{2k\pi }{\log a}i$, $k\in\mathbb Z\setminus\{0\}$) appear in the case of the hyperbolic orbit, reflecting the presence of the oscillatory term $\varepsilon F(G(\tau_\varepsilon))$ in the distributional asymptotic expansion \eqref{eq:ah}, see \cite[Theorem 5.3.16 or Theorem 5.3.21]{fzf}.
Further discussion on this topic will be given in Section \ref{sec_conc}.

\section{Proof of Theorems~B and C}\label{sec:general}
This section concerns the proof of Theorem~C and of Theorem~B from Section~\ref{sec:results}. Aside from being used in this paper for the proof of Theorem~B, Theorem~C has independent value in the theory of fractal zeta functions. Recall from Section~\ref{sec:results} that Theorem~C can be understood as a generalization of \cite[Theorem 2.3.18]{fzf} and partial converse of \cite[Theorem 5.4.30]{fzf}.
It gives, under appropriate hypotheses, a one-to-one correspondence of the distributional expansion of the tube function of a set and the collection of its complex dimensions and their principal parts.

In the proof, we need another type of a fractal zeta function of a bounded set, the \emph{tube zeta function}, introduced in \cite[\S2.2.2]{fzf}. The \emph{tube zeta function} $\tilde{\zeta}_A$ of a bounded subset $A\subseteq\eR^N$ is defined as the shifted one-sided Mellin transform of the tube function $\varepsilon\mapsto V_A(\varepsilon):=|A_{\varepsilon}|$, i.e., the Lebesgue measure of the $\varepsilon$-neighboorhood of the set $A$:
\begin{equation}\label{tube_zeta_def}
	\tilde{\zeta}_A(s):=\int_0^{\delta}t^{s-N-1}|A_t|\di t,\ \re s>\overline{\dim}_B A,\ \delta>0.
\end{equation}
It has analogous properties as the distance zeta function $\zeta_A$ of the set $A$ and is connected to it by the folowing functional equation, see \cite[Theorem 2.2.1]{fzf}:
\begin{equation}\label{tilde_dist}
	\zeta_A(s)=\delta^{s-N}|A_{\delta}|+(N-s)\tilde{\zeta}_A(s), \ \re s>\overline{\dim}_B A,\ \delta>0.
\end{equation}
For properties of the tube and distance zeta functions see \cite{fzf,dtzf,mefzf}.
Note that the dependence on $\delta>0$ in Equation \eqref{tube_zeta_def} is, as usual, nonessential from the point of view of the theory of complex dimensions.
This stems from the fact that changing $\delta>0$ amounts to adding a term which is entire in the variable $s$. Hence, it does not affect any complex dimensions, nor corresponding residues or principal parts.

\subsection{Proof of Theorem~C}\label{subsec:proofC}\
\smallskip

Theorem~C in Section~\ref{sec:results} is a direct generalization of Corollary~\ref{distr_asy_dist_zeta} of Proposition~\ref{distr_asy_tube_zeta} below to more asymptotic terms, and can be proven similarly. Therefore, for simplicity and clarity, we will prove here Proposition~\ref{distr_asy_tube_zeta} and its Corollary~\ref{distr_asy_dist_zeta}, and omit the lengthy technical proof of Theorem~C.

\begin{proposition}
	\label{distr_asy_tube_zeta}
Let $A\subseteq\eR^N$ be a bounded set such that the $m$-th primitive\footnote{The $m$-th primitive function $V_{A}^{[m]}$ of $\varepsilon\mapsto V_A(\varepsilon)$ is precisely defined in \eqref{k-th-prim}. In the sequel, we will call $V_{A}^{[m]}$ the \emph{$m$-th primitive tube function of the set $A$}, as in \cite{fzf}.}of the function $\varepsilon\mapsto V_A(\varepsilon)$, denoted by $V_{A}^{[m]}(\varepsilon)$, for some $m\in\mathbb N_0$, has the following asymptotics:
	\begin{equation}\label{log_asym}
		V_{A}^{[m]}(\varepsilon)=M\varepsilon^{N-\alpha+m}P_n(-\log\varepsilon)+O(\varepsilon^{N-\beta+m}),\ \varepsilon\to 0^+,
	\end{equation}
for some $\alpha,\beta\in\eR$ such that $\alpha>\beta$ and some constant $M>0$. Here, $P_n$ denotes a monic polynomial with real coefficients, of degree $n\in\eN_0$.

	Then the tube zeta function $\tilde{\zeta}_A(s)$ is meromorphic in the right open half-plane $\{\re s>\beta\}$, and given by:
	\begin{equation}\label{tilde_zeta_polovi}
		\tilde{\zeta}_A(s)=M(N-s+1)_m\,\delta^{s-\alpha}\sum_{j=0}^n\frac{P_n^{(j)}(-\log\delta)}{(s-\alpha)^{j+1}}+R(s),
	\end{equation}
	where $R$ is holomorphic in $\{\re s>\beta\}$, and
	\begin{equation}
		\label{g(s)}
		R(s)=M\sum_{n=1}^m(N-s+1)_{n-1}\delta^{s-N-n}V_{A}^{[n]}(\delta)+\int_{0}^{\delta}t^{s-1}O(t^{-\beta})\di t.
	\end{equation}
\end{proposition}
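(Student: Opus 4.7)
The plan is to integrate $\tilde{\zeta}_A(s)=\int_0^\delta t^{s-N-1}V_A(t)\di t$ by parts $m$ times and then inject the assumed asymptotics \eqref{log_asym} into the remaining integral. Since, by the definition \eqref{k-th-prim}, each primitive $V_A^{[k]}$ satisfies $V_A^{[k]}(0)=0$, and since $V_A$ is bounded on $(0,\delta]$ (so crudely $V_A^{[k]}(t)=O(t^k)$ at the origin), the IBP is trivially justified on the half-plane $\{\re s>N\}$; iterating it $m$ times, the accumulated factors $-(s-N-k)=N-s+k$ combine into the Pochhammer symbol, yielding
\begin{equation*}
\tilde{\zeta}_A(s)=\sum_{n=1}^m(N-s+1)_{n-1}\,\delta^{s-N-n}V_A^{[n]}(\delta)+(N-s+1)_m\int_0^\delta t^{s-N-m-1}V_A^{[m]}(t)\di t.
\end{equation*}
The boundary sum is entire in $s$, and the remainder integral converges and is holomorphic on the larger half-plane $\{\re s>\alpha\}$ thanks to the bound on $V_A^{[m]}$ supplied by \eqref{log_asym}. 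Both sides of the identity being holomorphic on $\{\re s>\alpha\}$ and agreeing on $\{\re s>N\}$, analytic continuation extends the equality to all of $\{\re s>\alpha\}$.

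Next I would substitute \eqref{log_asym} into the remainder integral. After cancelling the factor $t^{N+m}$, this splits as a singular part $M\int_0^\delta t^{s-\alpha-1}P_n(-\log t)\di t$ and a tame part $\int_0^\delta t^{s-1}O(t^{-\beta})\di t$, the latter being holomorphic on $\{\re s>\beta\}$ by uniform $L^1$-estimates on closed vertical substrips. The heart of the proof is the closed form
\begin{equation*}
\int_0^\delta t^{s-\alpha-1}P_n(-\log t)\di t=\delta^{s-\alpha}\sum_{j=0}^n\frac{P_n^{(j)}(-\log\delta)}{(s-\alpha)^{j+1}},
\end{equation*}
which I would prove by parametric differentiation: starting from $\int_0^\delta t^{\sigma-1}\di t=\delta^\sigma/\sigma$, differentiate $j$ times in $\sigma$ to evaluate $\int_0^\delta t^{\sigma-1}(\log t)^j\di t$ via the Leibniz rule, and then recognize the resulting sum as a Taylor-type expansion involving $P_n$ and its derivatives at $-\log\delta$. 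By linearity it suffices to verify the identity on the monomials $P_n(x)=x^j$.

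Assembling the two steps produces exactly \eqref{tilde_zeta_polovi}: the polar contribution, with a pole of order at most $n+1$ at $s=\alpha$, comes from the logarithmic polynomial integral, while the holomorphic remainder $R(s)$ of \eqref{g(s)} absorbs the boundary sum and the tame integral, the extra polynomial factor $(N-s+1)_m$ being harmlessly absorbed into the $O$-estimate on closed substrips of $\{\re s>\beta\}$. Meromorphicity on $\{\re s>\beta\}$ is then manifest from the explicit form. The main obstacle, modest but genuine, is the justification of the initial IBP chain and its analytic continuation down to $\{\re s>\alpha\}$: the boundary terms at $t=0$ must be controlled uniformly, and this is exactly what the combination $V_A^{[k]}(0)=0$ with the crude bound $V_A^{[k]}(t)=O(t^k)$ provides. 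All remaining work is careful bookkeeping of a Mellin-transform computation, and the same scheme, applied term-by-term to the finite sum in \eqref{log_asym11}, proves Theorem~C in full generality.
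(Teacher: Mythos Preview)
Your proposal is correct and follows essentially the same architecture as the paper's proof: derive the integrated functional equation via repeated integration by parts (the paper isolates this as Lemma~\ref{lemma_tilde}), substitute the asymptotic hypothesis \eqref{log_asym} into the remaining integral, and evaluate the logarithmic Mellin integral in closed form. The only differences are in technical packaging: the paper justifies the IBP directly on $\{\re s>\overline{\dim}_BA\}$ using the Minkowski-content bound $V_A^{[m]}(t)\le C_{\delta,m}\,t^{N-(\overline{D}+r)+m}$, whereas you first work on $\{\re s>N\}$ and then analytically continue; and the paper evaluates $\int_0^\delta t^{s-\alpha-1}(-\log t)^k\di t$ by iterated IBP rather than by parametric differentiation of $\delta^\sigma/\sigma$. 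Both routes are equivalent and standard.
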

	
We conclude from \eqref{tilde_zeta_polovi} and \cite[Theorem 2.2.11]{fzf} that the $(n+1)$-th order pole $\alpha$ of $\tilde{\zeta}_A$ equals to the upper box dimension of $A$, i.e., $\overline{\dim}_B A=\alpha$, since it is the first real complex dimension of $A$.

As was already mentioned above, by the theory of \cite{fzf,dtzf}, the principal part of $\tilde{\zeta}_A(s)$ given by \eqref{tilde_zeta_polovi} at the pole $\alpha$ does not depend on $\delta>0$, a fact that is not immediately clear from \eqref{tilde_zeta_polovi}. 
Therefore, we can obtain the principal part at $s=\alpha$ by letting $\delta=1$ in \eqref{tilde_zeta_polovi}, and examining:
	\begin{equation*}\label{tilde_zeta_prinicip}
		M(N-s+1)_m\sum_{j=0}^n\frac{P_n^{(j)}(0)}{(s-\alpha)^{j+1}}.
	\end{equation*}
One now needs to expand $(N-s+1)_m$ into its Taylor polynomial at $s=\alpha$, multiply it with the sum above and extract the coefficients. 
The leading coefficient, that is, the coefficient in front of $(s-\alpha)^{-n-1}$, is easily obtained and equal to
\begin{equation*}
	M(N-\alpha+1)_m\cdot n!.
\end{equation*}

\begin{remark}\label{Tauber}
It can be proven from \eqref{tilde_zeta_polovi}, using an appropriate Tauberian theorem (see \cite[Theorem 5.4.2]{fzf} for the case when $n=0$, and
 \cite{log_gauge} for the case when $n>0$),
that $D:=\alpha$ is, in fact, the box (Minkowski) dimension of the set $A$, and that the constant $M$ is  equal to the Minkowski content of $A$ (in case $n=0$) or to its \emph{gauge Minkowski content} (in case $n>0$), modulo a multiplicative constant:
	\begin{equation}\label{eq:taubi}
		\mathcal{M}^{D}(A,h)=\lim_{\varepsilon\to0^+}\frac{|A_{\varepsilon}|}{\varepsilon^{N-D}h(\varepsilon)}=M(N-D+1)_m,
	\end{equation}
	where the gauge function $h(\varepsilon)$ is given as $(-\log\varepsilon)^n$. In case $n=0$, $\mathcal{M}^{D}(A,1)$ is just the standard Minkowski content, and this statement is proven in \cite[Theorem 5.4.2]{fzf}.
	
Note also that, when $m>0$, \eqref{eq:taubi} does not follow directly from the expansion \eqref{log_asym}, since pointwise asymptotic expansions cannot in general be differentiated termwise, and hence, we cannot directly deduce that:
\begin{equation*}\label{eq:taub}
|A_{\varepsilon}|\sim M(N-\alpha+1)_m\cdot\varepsilon^{N-\alpha}(-\log\varepsilon)^n,\ \varepsilon\to 0^+.
\end{equation*}
Therefore, to conclude that \eqref{eq:taubi} is true in case $m>0$, the use of a Tauberian theorem as in \cite{fzf,log_gauge} is essential.
\end{remark}

The following corollary is the analogue of Proposition~\ref{distr_asy_tube_zeta}, but for the distance zeta function. It is a direct consequence of Proposition~\ref{distr_asy_tube_zeta} and the functional equation \eqref{tilde_dist} relating the distance and the tube zeta function.
\begin{corollary}[Analogue of Proposition~\ref{distr_asy_tube_zeta} for distance zeta functions]\label{distr_asy_dist_zeta}
Let $A\subseteq\eR^N$ be a bounded set such that its $m$-th primitive tube function $V_{A}^{[m]}$, for some $m\in\mathbb N_0,$ has the following asymptotics:
	\begin{equation*}\label{log_asym1}
		V_{A}^{[m]}(\varepsilon)=M\varepsilon^{N-\alpha+m}P_n(-\log\varepsilon)+O(\varepsilon^{N-\beta+m}),\ \varepsilon\to0^+,
	\end{equation*}
for some $\alpha,\beta\in\eR$ such that $\alpha>\beta$ and some constant $M>0$. Here, $P_n$ denotes a monic polynomial with real coefficients of degree $n\in\eN_0$.

Then the distance zeta function ${\zeta}_A(s)$ of the set $A$ is meromorphic in the right open half-plane $\{\re s>\beta\}$, and given by:
	\begin{equation}\label{distance_zeta_polovi}
		{\zeta}_A(s)=M(N-s)_{m+1}\,\delta^{s-\alpha}\sum_{j=0}^n\frac{P_n^{(j)}(-\log\delta)}{(s-\alpha)^{j+1}}+R(s),
	\end{equation}
	where $R$ is holomorphic in $\{\re s>\beta\}$, and:
	\begin{equation}
		\label{g(s)}
		R(s)=M\sum_{n=0}^m(N-s)_{n}\delta^{s-N-n}V_{A}^{[n]}(\delta)+(N-s)\int_{0}^{\delta}t^{s-1}O(t^{-\beta})\di t.
	\end{equation}
Here, $\delta>0$ is non-essential, as in definition of the distance zeta-function \eqref{eq:dz}, and may be taken as e.g. $\delta=1$.

Finally, the distance zeta function is at most polynomially super languid in any closed vertical strip of finite width contained in $\{\re s>\beta\}$. 
\end{corollary}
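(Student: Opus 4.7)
The plan is to derive the corollary directly from Proposition~\ref{distr_asy_tube_zeta} via the functional equation \eqref{tilde_dist},
\[
\zeta_A(s) \;=\; \delta^{s-N}|A_\delta| + (N-s)\,\tilde{\zeta}_A(s), \qquad \re s > \overline{\dim}_B A,
\]
and extend it meromorphically. The asymptotic hypothesis on $V_A^{[m]}(\varepsilon)$ is exactly the one required in Proposition~\ref{distr_asy_tube_zeta}, so its conclusion \eqref{tilde_zeta_polovi} is available for substitution.

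First, I would handle the principal-part contribution. Multiplying the leading term $M(N-s+1)_m\,\delta^{s-\alpha}\sum_{j=0}^n P_n^{(j)}(-\log\delta)(s-\alpha)^{-(j+1)}$ of \eqref{tilde_zeta_polovi} by $(N-s)$ and invoking the defining Pochhammer identity $(N-s)(N-s+1)_m=(N-s)_{m+1}$ gives precisely the leading term on the right-hand side of \eqref{distance_zeta_polovi}. This is where the one-step shift of the rising factorial from $m$ to $m+1$ enters naturally.

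Next, for the remainder $R(s)$, I would collect three contributions and reindex. The boundary term $\delta^{s-N}|A_\delta|$ coming from the functional equation equals $\delta^{s-N}V_A(\delta)$ and matches the $n=0$ summand of $\sum_{n=0}^m(N-s)_n\,\delta^{s-N-n}V_A^{[n]}(\delta)$, since $(N-s)_0=1$. Multiplying the Proposition's sum $M\sum_{n=1}^m(N-s+1)_{n-1}\,\delta^{s-N-n}V_A^{[n]}(\delta)$ by $(N-s)$ produces, by the same identity $(N-s)(N-s+1)_{n-1}=(N-s)_n$, the remaining summands for $n=1,\dots,m$. Finally, $(N-s)$ applied to the integral remainder in \eqref{g(s)} reproduces the last term in the corollary's $R(s)$. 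Holomorphy of $R$ in $\{\re s>\beta\}$ is inherited termwise from Proposition~\ref{distr_asy_tube_zeta}.

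For the polynomial super languidity statement, I would fix a closed vertical substrip $\{\beta'\le \re s\le\gamma\}\subset\{\re s>\beta\}$ bounded away from $\alpha$ and from the other visible complex dimensions of $\tilde\zeta_A$. On such a strip, the factors $\delta^{s-\alpha}$ and $\delta^{s-N-n}$ are uniformly bounded, the rational factors $(s-\alpha)^{-(j+1)}$ are bounded, the Pochhammer polynomials $(N-s)_n$ and $(N-s)_{m+1}$ grow at most like $|\im s|^n$ respectively $|\im s|^{m+1}$, and the integral $\int_0^\delta t^{s-1}O(t^{-\beta})\,dt$ is uniformly bounded (by the absolute convergence estimate already used in Proposition~\ref{distr_asy_tube_zeta}). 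Combining these gives $|\zeta_A(s)|=O(|\im s|^{m+1})$ uniformly on the strip, which is the desired polynomial super languidity in the sense of Definition~\ref{sup-languidity}. The only delicate bookkeeping, and the sole place where one should be careful, is the indexing match between the boundary constant $\delta^{s-N}|A_\delta|$ and the $n=0$ summand of the sum in \eqref{g(s)}; beyond this, the argument is purely algebraic manipulation of Pochhammer symbols plus routine growth estimates.
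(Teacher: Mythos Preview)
Your proposal is correct and follows essentially the same approach as the paper: the paper states that the formula is a direct consequence of Proposition~\ref{distr_asy_tube_zeta} and the functional equation \eqref{tilde_dist}, and only spells out the super languidity argument (which matches yours almost verbatim, via Remark~\ref{sup-lang-ex}). You actually supply more algebraic detail than the paper does, correctly identifying the Pochhammer shift $(N-s)(N-s+1)_m=(N-s)_{m+1}$ and the reindexing that absorbs the boundary term $\delta^{s-N}|A_\delta|$ as the $n=0$ summand; your flagged ``delicate bookkeeping'' about the constant $M$ in front of that $n=0$ summand is well placed, since the paper's stated $R(s)$ appears to carry a spurious $M$ there (compare with the derivation via Lemma~\ref{lemma_tilde}). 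One minor remark: your restriction to strips ``bounded away from $\alpha$'' is unnecessary, since super languidity is an asymptotic statement as $|\im s|\to\infty$ and the pole at the real point $\alpha$ does not affect it.
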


\begin{proof}
	It only remains to prove the statement about super languidity.
	This follows from the fact that each term in \eqref{distance_zeta_polovi} is polynomially super languid.
	Namely, the variable $s$ appears in rational or polynomial terms (which are by default super languid) or in the term $\delta^s$ which is bounded by a constant in any vertical strip of finite width, and this is also true for the integral appearing in Equation \eqref{g(s)}.  
	Hence, by Remark \ref{sup-lang-ex}, $\zeta_A$ is also polynomially super-languid in any vertical strip of finite width.
\end{proof}



	
In the proof of Proposition~\ref{distr_asy_tube_zeta}, we need the following auxiliary lemma, whose proof is in the Appendix.

\begin{lemma}\label{lemma_tilde}
	Let $A\subseteq\eR^N$ be a bounded set. Then, for every $m\in\eN$, we have the following functional equation:
	\begin{equation}\label{eq_tilde}
		\tilde{\zeta}_A(s)=\sum_{n=1}^m(N-s+1)_{n-1}\delta^{s-N-n}V_{A}^{[n]}(\delta)+(N-s+1)_m\int_{0}^{\delta}t^{s-N-1-m}V_{A}^{[m]}(t)\di t,
	\end{equation}
	valid for all $s\in\Ce$  such that $\re s>\overline{\dim}_B A$.
\end{lemma}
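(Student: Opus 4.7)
The plan is to prove the identity by induction on $m\in\eN$, starting from the definition
$$\tilde{\zeta}_A(s)=\int_0^{\delta}t^{s-N-1}V_A(t)\,\di t,\quad \re s>\overline{\dim}_BA,$$
and applying integration by parts $m$ times, successively anti-differentiating the tube function $V_A$ while letting the polynomial factor in $t$ take the derivatives. Writing $V_A(t)=\frac{d}{dt}V_A^{[1]}(t)$ and integrating by parts once gives
$$\tilde{\zeta}_A(s)=\Bigl[t^{s-N-1}V_A^{[1]}(t)\Bigr]_0^{\delta}+(N+1-s)\int_0^{\delta}t^{s-N-2}V_A^{[1]}(t)\,\di t,$$
which matches the claimed formula for $m=1$ once one recognises $(N-s+1)_0=1$ and $(N-s+1)_1=N-s+1$.

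The inductive step is similar: assuming the identity holds for some $m\geq 1$, I substitute $V_A^{[m]}=\frac{d}{dt}V_A^{[m+1]}$ into the remaining integral and integrate by parts again. This produces the new boundary contribution $(N-s+1)_m\,\delta^{s-N-m-1}V_A^{[m+1]}(\delta)$, which extends the sum to $n=m+1$, and a new integral with prefactor $(N-s+1)_m\cdot(N+m-s)=(N-s+1)_{m+1}$ by the rising-factorial recursion, which is exactly the remainder term for index $m+1$.

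The main technical point — and the only place where the hypothesis $\re s>\overline{\dim}_BA$ is used — is checking that the boundary term at $t=0$ vanishes at every step, i.e.\ that
$$\lim_{t\to 0^+} t^{s-N-n}V_A^{[n]}(t)=0\qquad\text{for }n=1,\ldots,m.$$
The plan here is to combine two elementary facts: first, the monotonicity of $V_A$ gives $V_A^{[n]}(t)\leq \frac{t^n}{n!}V_A(t)$; second, by the very definition of the upper box dimension $D:=\overline{\dim}_BA$, for every $\eta>0$ there is a constant $C_\eta>0$ with $V_A(t)\leq C_\eta\, t^{N-D-\eta}$ for all small $t>0$. Combining these yields $|t^{s-N-n}V_A^{[n]}(t)|\leq \frac{C_\eta}{n!}\,t^{\re s-D-\eta}$, and choosing $\eta<\re s-D$ forces the limit to be zero (and simultaneously guarantees absolute convergence of each integral that appears along the way, so that all manipulations are legitimate). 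This is the step I expect to require the most care, but it is routine once the bounds are laid out. With the boundary terms thus dispatched, the inductive bookkeeping of the rising factorial closes the argument.
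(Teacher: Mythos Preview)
Your proof is correct and follows essentially the same approach as the paper: iterated integration by parts together with the bound $V_A(t)\leq C_\eta\,t^{N-\overline{D}-\eta}$ coming from the definition of the upper box dimension, which ensures both the vanishing of the boundary terms at $t=0$ and the absolute convergence of all intermediate integrals. The only cosmetic difference is that you bound $V_A^{[n]}(t)\leq \frac{t^n}{n!}V_A(t)$ via monotonicity of $t\mapsto |A_t|$, whereas the paper integrates the power bound on $V_A$ inductively to obtain $V_A^{[n]}(t)\leq C_{\delta,n}\,t^{N-\overline{D}-\eta+n}$; both routes give the same order estimate and the argument closes identically.
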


\begin{proof}[Proof of Proposition~\ref{distr_asy_tube_zeta}]
	Let $A\subseteq \mathbb R^N$ be a bounded set and $m\in\mathbb N$ such that \eqref{log_asym} holds. Let $\overline D:=\overline{\dim}_B A$. From Lemma \ref{lemma_tilde}, the functional equation \eqref{eq_tilde} for $\tilde{\zeta}_A(s)$ is valid on $\mathrm{Re}\,s>\overline D.$ Putting \eqref{log_asym} in the integral on the right-hand side of \eqref{eq_tilde}, we get:
	\begin{equation}\label{int_eq_n}
		\int_{0}^{\delta}t^{s-N-1-m}V_{A}^{[m]}(t)\di t=M\int_{0}^{\delta}t^{s-\alpha-1}P_n(-\log t)\di t+\int_{0}^{\delta}t^{s-1}O(t^{-\beta})\di t.
	\end{equation}
	The last integral above is holomorphic in the open right half-plane $\{\re s>\beta\}$, since the integrand is of order $O(t^{\re s-\beta-1})$.
	It is now left to show that the first integral on the right-hand side of \eqref{int_eq_n} can be meromorphically extended to all of $\Ce$, with $\alpha$ being the only pole of order $n+1$.
	
	\noindent Indeed, for any $k\in\eN_0$ and for $\re s>\alpha$, we have: 
	$$
	\int_{0}^{\delta}t^{s-\alpha-1}(-\log t)^k\di t=\frac{\delta^{s-\alpha}(-\log\delta)^k}{s-\alpha}+\frac{k}{s-\alpha}\int_0^{\delta}t^{s-\alpha-1}(-\log t)^{k-1}\di t.
	$$
	This can be iterated $k$ times on right-hand side integrals, and we get:
	\begin{equation}\label{eq:hi}
	\int_{0}^{\delta}t^{s-\alpha-1}(-\log t)^k\di t=\delta^{s-\alpha}\sum_{j=0}^k\frac{k^{\underline{j}}\,(-\log\delta)^{k-j}}{(s-\alpha)^{j+1}}.
	\end{equation}
	Here, $k^{\underline{j}}:=k(k-1)\cdots(k-j+1)$ denotes the \emph{falling factorial}, with convention $k^{\underline{0}}:=1$. 
	We note that the right-hand side in \eqref{eq:hi} is meromorphic in all of $\Ce$, with only pole of order $k+1$ at $s=\alpha$.
	Using \eqref{eq:hi} in $\int_{0}^{\delta}t^{s-\alpha-1}P_n(-\log t)\di t$ in \eqref{int_eq_n} and interchanging the order of (finite) summation, we get:
	\begin{equation}\label{eq_poln+1}
		\int_{0}^{\delta}t^{s-\alpha-1}P_n(-\log t)\di t=\delta^{s-\alpha}\sum_{j=0}^n\frac{P_n^{(j)}(-\log\delta)}{(s-\alpha)^{j+1}},
	\end{equation}
	where, again, the right hand side is meromorphic in $\Ce$ with a single pole of order $n+1$ at $s=\alpha$.
	Now the statement \eqref{tilde_zeta_polovi} of Proposition~\ref{distr_asy_tube_zeta} follows combining \eqref{eq_tilde}, \eqref{int_eq_n} and \eqref{eq_poln+1}. By $R(s)$, we denote all terms that are holomorphic in $\{\re s>\beta\}$.
\end{proof}
\medskip

\begin{proof}[Proof of Theorem~C] Direct generalization of Corollary~\ref{distr_asy_dist_zeta} to more terms of power-logarithmic type in the asymptotic expansion.
Also, we have changed notation $\alpha_i\leftrightarrow N-\alpha_i$ in Theorem~C in the sense that $N$ is not shown in the expansion but instead it is shown in the expression for $\zeta_A$. 
\end{proof}
\smallskip

\begin{remark}
	It is an interesting open question to ask under which additional conditions we can replace polynomials $P_n$ in \eqref{log_asym} of Proposition~\ref{distr_asy_tube_zeta} by a convergent power series $P$ and get a similar conclusion, but, in this case, with $s=\alpha$ being an essential singularity of the tube zeta function $\tilde{\zeta}_A(s)$.
	We point out that there exist fractal sets such that their fractal zeta functions possess essential singularities but, at least for the time being, there are no results about the asymptotics of their tube functions; see \cite{essential}.
\end{remark}

\medskip

\subsection{Proof of Theorem B}\label{sec:proof_B}
\smallskip

\begin{proof}[Proof of Theorem~B]

Since the reconstruction of the \emph{distance} zeta function $\zeta_f$ from the coefficients of the distributional expansion of $V_f$ in Theorem~A is rather complicated directly, we will use the functional equation between the distance and the \emph{tube} zeta function in order to describe $\zeta_f$.
Namely, by \cite[Theorem 5.2.6]{fzf}, the relation to the tube zeta function is direct and from it the distance zeta function can be reconstructed.
Set $\delta=1$ in the relative version of equation \eqref{tilde_dist} to obtain
\begin{equation}\label{tube-dist-f}
	\zeta_f(s)=\zeta_{\mathcal O_f(x_0),[0,x_0]}(s)=x_0+(1-s)\tilde{\zeta}_{\mathcal O_f(x_0),[0,x_0]}(s),
\end{equation}
where
\begin{equation}
	\tilde{\zeta}_{\mathcal O_f(x_0),[0,x_0]}(s)=\int_0^1t^{s-2}|\mathcal O_f(x_0)_t\cap[0,x_0]|dt,
\end{equation}
is the \emph{relative} tube zeta function with $A:=\mathcal O_f(x_0),\ \Omega:=[0,x_0]$; for details see \cite[\S4.5.1.]{fzf}.

The proof follows indirectly by combining Theorem~A and Theorem~C, along with \cite[Theorem 5.2.6]{fzf} which reconstructs distributional aysmptotics of the tube function $V_f$ from the relative tube zeta function $\tilde{\zeta}_{\mathcal O_f(x_0),[0,x_0]}$.
The final ingredient is the functional equation \eqref{tube-dist-f}.

Namely, from Theorem~A we know that a complete distributional asymptotic expansion of the tube function $V_f(\varepsilon)$ exists and we also have its explicit formula (3). 
From the proof of Theorem~A we know, moreover, that, for every $N\in\mathbb N$, there exists $k_N$-th primitive $V_f^{[k_N]}(\varepsilon)$ with an asymptotic expansion given by \eqref{eq:ha}. To this we apply Theorem~C. In Theorem~C, we let $A:=\mathcal O_f(x_0)$. Note that $V_{\mathcal O_f(x_0)_\varepsilon}=V_f(\varepsilon)+2\varepsilon,\ \varepsilon>0$.
By letting $N\to+\infty$ we obtain, in steps, the existence of meromorphic continuation of $\zeta_{\mathcal{O}_f(x_0)}(s)=\zeta_f(s)+\frac{2\delta^{s}}{s}$, $\delta\geq(x_0-f(x_0))/2$, to all of $\Ce$.
Also, from Theorem~C we know that this extension is polynomially languid and hence so is $\zeta_f(s)$, which equals to the relative distance zeta function $\zeta_{\mathcal O_f(x_0),[0,x_0]}$ in the notation of \cite[Theorem 5.3.21]{fzf}, where $A:=\mathcal O_f(x_0)$ and $\Omega:=[0,x_0]$.
Finally, the functional equation \eqref{tube-dist-f} gives us the same extension and languidity conclusion for the corresponding relative tube zeta function $\tilde{\zeta}_{\mathcal O_f(x_0),[0,x_0]}$.
Thus, the hypotheses of \cite[Theorem 5.2.6]{fzf} are satisfied for any screen given as a vertical line $\{\re s=\beta\}$, $\beta\in\mathbb R$.
Namely, by Theorem~C, one then knows that $\zeta_{\mathcal O_f(x_0)}$, and hence $\tilde{\zeta}_{\mathcal O_f(x_0),[0,x_0]}$, can be meromorphically extended to the window $\{\re s>\beta\}$ and since $\beta\in\eR$ is arbitrary, we conclude that both functions are meromorphic in $\Ce$. 

Note that we use Theorem~C here merely to prove the existence of the meromorphic extension of the zeta function and the polynomial languidity on windows of the type $\{\re s>\beta\}$, and not to recover the principal parts, since formula \eqref{distance_zeta_polovi1} in Theorem~C is not easily applicable. Instead, once we have proven the existence and the polynomial languidity of zeta functions, we use \cite[Theorem 5.2.6]{fzf} to recover the principal parts at the poles from the distributional expansion of $V_{f}$ from Theorem A $(3)$, as follows.
Indeed, by polynomial languidity proven above, the conclusion of \cite[Theorem 5.2.6]{fzf} holds and one obtains the following distributional asymptotics for the tube function $V_f$:
\begin{equation}\label{eq:dasyi}
	V_f(\varepsilon)=|\mathcal O_f(x_0)_{\varepsilon}\cap[0,x_0]|=_{\mathcal D}\sum_{\omega\in \{\re s>\beta\}}\res\left(\varepsilon^{1-s}\tilde{\zeta}_{\mathcal O_f(x_0),[0,x_0]}(s),\omega\right)+\mathscr{R}(\varepsilon),
\end{equation}
where $\mathscr{R}(\varepsilon)=O(\varepsilon^{1-\beta})$, as $\varepsilon\to 0^+$ in the distributional sense.
From the above equation one sees how the poles of $\tilde{\zeta}_{\mathcal O_f(x_0),[0,x_0]}$ generate terms in the distributional asymptotics.
Namely, if the pole $\omega$ is of order one, then the term it generates is $\varepsilon^{1-\omega}\res(\tilde{\zeta}_{\mathcal O_f(x_0),[0,x_0]},\omega)$. If the pole $\omega$ is of order strictly higher than $1$, then it generates additional logarithmic terms in $\varepsilon$, and the whole principal part of $\tilde{\zeta}_{\mathcal O_f(x_0),[0,x_0]}(s)$ at pole $\omega$ contributes.
More precisely, let $\omega$ be a pole of $\tilde{\zeta}_{\mathcal O_f(x_0),[0,x_0]}(s)$ of order $r+1$, $r\in\eN_0$. Then:
\begin{equation}\label{rezpol_t}
	\res\left({\varepsilon^{1-s}}\tilde{\zeta}_{\mathcal O_f(x_0),[0,x_0]}(s),\omega\right)\!\!=\!\!\varepsilon^{1-\omega}\sum_{n=0}^{r}\frac{(-1)^n\cdot [(s-\omega)^{-n-1}]\tilde{\zeta}_{\mathcal O_f(x_0),[0,x_0]}(s)\cdot \log^n \varepsilon}{n!},
\end{equation}
where $[(s-\omega)^{-n-1}]\tilde{\zeta}_{\mathcal O_f(x_0),[0,x_0]}(s)$ denotes the $(-n-1)$-st coefficient in the Laurent expansion of $\tilde{\zeta}_{\mathcal O_f(x_0),[0,x_0]}(s)$ at $\omega$.
One obtains this by multiplying the Taylor series of $\varepsilon^{1-s}$ at $\omega$ by the Laurent series of $\tilde{\zeta}_{\mathcal O_f(x_0),[0,x_0]}(s)$ at $\omega$, and then extracting the residue.
We omit the details and refer to the proof of \cite[Theorem 5.4.27]{fzf} where exactly the same type of calculation was done.
Comparing the coefficients of the distributional asymptotics given by \eqref{eq:dasyi} and \eqref{rezpol_t} with the distributional asymptotics of $V_f$ from Theorem~A $(3)$, due to the uniqueness of the distributional asymptotics, we recover the principal part of $\tilde{\zeta}_{\mathcal O_f(x_0),[0,x_0]}(s)$ at each pole $\omega$. Denote by $\omega_m:=1-\frac{m}{k+1}$, $m\in\mathbb N,$ the co-exponents of powers of $\varepsilon$ in Theorem A $(3)$. 
 Denoting the terms of the distributional asymptotics from Theorem~A $(3)$ by:
\begin{equation*}
	\varepsilon^{1-\omega_m}\sum_{k=0}^{n_m} l_k^m\log^k \varepsilon,\ l_k^m\in\mathbb R,\ k=0,\ldots,n_m,\ m\in\mathbb N,
\end{equation*}
we deduce that $\omega_m$, $m\in\mathbb N$, are the poles of $\tilde{\zeta}_{\mathcal O_f(x_0),[0,x_0]}(s)$, and that:
\begin{equation*}\label{lk}
	[(s-\omega_m)^{-k-1}]\tilde{\zeta}_{\mathcal O_f(x_0),[0,x_0]}(s)=(-1)^k \cdot k!\cdot l_k^m,\, k=0,\ldots,n_m,\ m\in\mathbb N.
\end{equation*}
For each $\omega_m$, the maximal power of the logarithm that multiplies $\varepsilon^{1-\omega_m}$ in the distributional expansion of $V_f$ determines the order of the pole $\omega_m$.
In fact, for $m\in\{1,2,\ldots,k\}$, the poles are of the first order. 

Finally, by formula \eqref{tube-dist-f}, we get \eqref{dis.zet.para} in Theorem~B.
\end{proof}

The following remark explains how Theorems A and B can be used in practice.
\begin{remark}\label{rem:zadnji}
Suppose that we are able to find (in a relatively easy way) a meromorphic extension to all of $\mathbb C$ of the distance (or geometric) zeta function of a parabolic orbit, that we know exists by Theorem B.
Then, by \cite[Theorem~5.3.21]{fzf}, we can directly determine, from complex dimensions and their residues, the distributional asymptotics of $V_f(\varepsilon)$, as $\varepsilon\to 0^+$.
There is \emph{no need to check languidity conditions}, which is normally necessary for the application of \cite[Theorem~5.3.21]{fzf}, since the existence of the distributional expansion is already known theoretically by Theorem~A, and the polynomial languidity of the distance zeta function follows theoretically from Theorem~B.

Moreover, comparing Theorem~A $(1)$ and $(3)$, the distributional asymptotics of $V_f(\varepsilon)$ for a parabolic diffeomorphism $f$ of multiplicity $k$, $k\in\mathbb N$, is actually valid pointwise up until the order of $O(\varepsilon^{2-\frac{1}{k+1}})$.
\end{remark}

\section{Concluding Remarks and Perspectives}\label{sec_conc}

Let us further explain here the difference between parabolic orbits and hyperbolic orbits from the viewpoint of fractality and existence of nonreal complex dimensions, that we have mentioned in the Introduction and in Subsection~\ref{sec:hyp}.

If we look upon the classical middle-third Cantor set $C$, it has complex dimensions of the type $\log_32+\I pk$, $k\in\mathbb{Z}$, where $p=\frac{2\pi}{\log 3}$ is the so-called {\em oscillatory period}; see \cite[Pages 4--5]{fzf}.
This is, in turn, then reflected in the fact that the tube function of $C$ does not admit a monotonic asymptotic first term, i.e., $C$ is not Minkowski measurable. Instead, the first term is of the type $G(\varepsilon)\varepsilon^{1-\log_32}$, where $G$ is multiplicatively periodic with period $p$.
This geometric significance of complex dimensions led the authors of \cite{fzf} to propose a new definition of fractality, which states that a set is fractal if it possesses a nonreal complex dimension.
  
For instance, the somewhat simplified version of devil's staircase is indeed fractal according to the new definition, since it has nonreal complex dimensions which are, as expected, exactly the same complex dimensions as those of the Cantor set $C$. In addition, it also has the complex dimension $\omega=1$. It coincides with its topological, Minkowski and Hausdorff dimension.
Hence, this set is not fractal according to Mandelbrot's definition; see, e.g., \cite[\S5.5.4]{fzf}.

In this paper we provide examples which strengthen the conjecture that this new definition of fractality, as a form of self-similarity, is a step forward in resolving the conundrum. Namely, the orbits of 1-dimensional dynamical systems generated by parabolic germs that we are investigating here do not posses nonreal complex dimensions, although they have nontrivial Minkowski dimension and are hence fractal according to the classical definition.
Indeed, these orbits are just sequences on the real line that have an accumulation point at the origin. They have Hausdorff dimension equal to zero because of the countable stability property.
Asymptotically, they behave as $1/k$-strings, $k\in\mathbb N$, as defined in \cite{lapidusfrank12} and, as is well known, their Minkowski dimension is nontrivial.
It is our point of view that in this case the Minkowski dimension should be understood as a kind of a measuring tool for the convergence speed of the string, i.e., the sequence generated by the string, rather than an indication of its self-similarity.
The orbits in the parabolic case do not posses any kind of (linear) self-similarity or self-affinity, and this is reflected in the fact that they do not possess any nonreal complex dimensions.
On the other hand, in the hyperbolic case from Subsection~\ref{sec:hyp}, we do get nonreal purely imaginary complex dimensions. We conjecture that this apparent paradox may be justified as follows.
Namely, in the hyperbolic case, the orbit is self-similar in a more general sense, i.e., it can be understood as an inhomogneous self-similar set. This fact is indeed reflected in its complex dimensions; see  \cite[Remark 2.1.87 and Example 5.5.25]{fzf}.
Inhomogeneous self-similar sets were introduced in \cite{BarDemk}; see also \cite{Bar,BakFraMa,Fra1,Fra2}.

Even more interesting and novel in this paper is the insight into the asymptotics of the tube function of orbits in the parabolic case. The tube function does not posses classical asymptotics beyond a certain term, again, due to oscillations, which are precisely described in Theorem A (1). These oscillations, however, are not multiplicatively periodic, or, one might also say, are not resonant to the intrinsic geometry of the orbit.
In this case, the fractal zeta function does not \emph{encode} these oscillations, at least not in its poles, i.e., they do not generate nonreal complex dimensions.
Accordingly, these \emph{nonresonant} oscillations are sufficiently weak in the sense that the complete distributional asymptotics of the tube function of the parabolic orbit in Theorem~A $(3)$ exists, or, in other words, repetitive integration of the tube function will completely dampen the nonresonant oscillations, as is seen from the proof.  
On the other hand, in the hyperbolic case treated in Subsection~\ref{sec:hyp}, similarly as in the case of the Cantor set, the oscillations in the tube functions of orbits are multiplicatively periodic, i.e., \emph{resonant} with the geometry of orbits, and no amount of integration will dampen them out. In other words, the fractal zeta function will encode them as nonreal complex dimensions and the complete distributional asymptotics of the tube function will not exist in the classical sense. For computational details, see Remark~\ref{rem:sve} in Subsection~\ref{sec:hyp}.

To conclude, the notion of fractality in the sense of complex dimensions as in \cite{fzf,lapidusfrank12}, in general, seems to detect \emph{linear} self-similarity of the set. On the other hand, the orbits of parabolic germs may be considered \emph{self-similar} in some broader, \emph{dynamical} or \emph{non-linear}, sense, which the complex dimensions cannot detect. A corresponding generalized notion of \emph{non-linear fractality} could be investigated.

Finally, the continuous time tube function of orbits will have a complete asymptotic expansion in both cases (i.e. parabolic and hyperbolic), see Theorem A $(2)$ and Remark~\ref{rem:sve}\,$(b)$. Thus, this regularization technique dampens both, the \emph{resonant} and the \emph{nonresonant} oscillations in a given orbit. Therefore, it can be thought of as a sort of a dynamical regularization technique.
For further research, it would be interesting to rigorously define the \emph{resonant} and \emph{nonresonant} intrinsic oscillations of general sets and to relate these definitions to the theory of complex dimensions.
\smallskip

Let us mention at the end yet another perspective for further research. We have seen in the paper that there exist two ways of regularization of the tube function $V_f(\varepsilon)$ of parabolic orbits: the \emph{continuous time tube function} and the \emph{distributional expansion} of the tube function, as $\varepsilon\to 0^+$. Both are continuations, in a power-logarithmic scale, of the standard tube function of parabolic orbits, after it becomes oscillatory. It was shown in \cite{nonlin} that the non-existence of the full asymptotic expansion in power-logarithmic scale of the tube function for parabolic orbits prevents seeing the analytic class of a germ from the tube function, while the formal class can be seen from finitely many terms of the tube function \cite{formal}. In \cite{MRRZ3} it was suggested using the \emph{continuous time tube function} instead of the standard tube function, as its dynamical regularization, by embedding in a flow. Here, we have shown that the distributional expansion of $V_f(\varepsilon)$, as $\varepsilon\to 0^+,$ is, on one hand, indirectly related to the continuous time tube function by Theorem A. On the other hand, it is in one-to-one correspondence with complex dimensions of the orbit (the poles of the distance zeta function of the orbit) and their principal parts, see Corollary~\ref{cor:oneone}. Moreover, the tube zeta function of the orbit, which is directly related to the distance zeta function by a functional equation, is, by its definition, the Mellin transform of the tube function of the orbit \cite[\S2.2.2]{fzf}. This passage, by an integral transform, from the original $\varepsilon$-plane, where we have an asymptotic expansion of the tube function, to the complex $\zeta$-plane, where we analyze singularities of the tube zeta function, strongly resembles the Borel-Laplace resummation method, see, e.g., \cite{sauzin}. It is a central method in the so-called \emph{resurgent approach} to analytic classification of parabolic germs by Ecalle \cite{ecalle}, later explained also in \cite{candel} or \cite{sauzin}. By Borel transform, one passes from the original $z$-plane, that is, from the algebra of formal series to which formal embedding of a germ in a vector field belongs, to the algebra of so-called \emph{resurgent functions} in the Borel $\zeta$-plane, where an appropriate analysis of singularities (using the so-called \emph{alien calculus}) recovers the analytic class of the germ. All mentioned leaves us wondering if, by an appropriate treatment and analysis of complex dimensions of one orbit of a parabolic germ, we can reconstruct the moduli of analytic classification of the germ, in parallel with the method of Ecalle.

\section{Appendix}\label{sec:last}

\subsection{Geometric zeta function of the shifted $a$-string}\label{subsec:sas}\  

In this subsection we prove Theorem~\ref{shifted_a_prop} which generalizes \cite[Theorem 6.21]{lapidusfrank12} about the meromorphic extension and complex dimensions of the geometric zeta function of the standard $a$-string to the case of the \emph{shifted $a$-string} defined by \eqref{shift-a}. It also extends \cite[Theorem 6.21]{lapidusfrank12} in the sense that we give an explicit formula for the meromorphic extension of the geometric zeta function of the (shifted) $a$-string. Theorem~\ref{shifted_a_prop} is crucial in the proof of Proposition~\ref{prop:geo_k} about meromorphic extension of the distance zeta function of model orbits in Subection~\ref{subsec:model}. 
The first part of the proof is completely analogous to the proof of \cite[Theorem 6.21]{lapidusfrank12}, but, in order to keep the second part clear, we give here a brief sketch.

\begin{theorem}\label{shifted_a_prop} Let $a,\,b>0$ and let 
\begin{equation}\label{shift-a}
\mathcal L_{a,b}:=\{\ell_j(b):j\in\mathbb N_0\},\ \ell_j(b):=(j+b)^{-a}-(j+1+b)^{-a},\ j\in\mathbb N_0,
\end{equation} be a shifted $a$-string. 
\smallskip

$(1)$ The geometric zeta function $\zeta_{\mathcal{L}_{a,b}}(s):=\sum_{j=0}^{\infty}\ell_j(b)^s$ can be meromorphically extended from $\{s\in\mathbb C:\re s>\frac{1}{a+1}\}$ to all of $\Ce$. The poles of $\zeta_{\mathcal{L}_{a,b}}$ are located at $\frac{1}{a+1}$ and at $($a subset of$)$ the set of the points $\frac{-m}{a+1}$, $m\in\eN$, and they are all simple. In particular, the box dimension of $\mathcal{L}_{a,b}$ is $D=\frac{1}{a+1}$, and this is the only pole of $\zeta_{\mathcal{L}_{a,b}}$ with a
positive real part.

More precisely, for a fixed $M\in\eN_0$, $\zeta_{\mathcal{L}_{a,b}}(s)$ can be expressed as a sum of Hurwitz zeta functions in the following sense:
\begin{equation}\label{zetaab_W}
	\zeta_{\mathcal{L}_{a,b}}(s)=\sum_{n=0}^{M}W(s,a,n)\zeta\big((a+1)s+n,b\big)+a^sR(s),
\end{equation}
where $R(s)$ defined and holomorphic for $\re s>-\frac{M}{a+1}$. Moreover, for every $\gamma>0$, $R(s)=O\big(|s|^{M+1}\big)$, as $|s|\to+\infty$ on $\{s\in\mathbb C:\,\mathrm{Re}\,s\geq-\frac{M}{a+1}+\gamma\}$.
The functions $W(s,a,n)$ are entire, more precisely, polynomials in $s$ of degree $n$ modulo the factor $a^s$, and polynomials in $a$ of degree $n$. They are given as the following finite triple sums:
\begin{align}\begin{split}\label{Wsan}
	W(s,a,n):=a^s\sum_{k=0}^{n}\left(-\frac{1}{a}\right)^k{s\choose k}\sum_{i=0}^k(-1)^i{k\choose i}\sum_{j=0}^i(-a)^j&{i\choose j}{(i-k)a\choose n+k-j},\\
	&\qquad n\in\mathbb N_0,\ s\in\mathbb C.
\end{split}\end{align}
In particular, we have that $W(s,a,0)=a^s$.
\smallskip
  
$(2)$ If we choose for the screen the vertical line $\{\re s=\sigma\}$, where $\sigma\in(-M/(a+1),-(M-1)/(a+1)]$, then $\zeta_{\mathcal{L}_{a,b}}$ is languid with exponent $\kappa_f=M+1$.
\end{theorem}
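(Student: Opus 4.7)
The plan is to generalise the binomial-expansion argument used in the proof of Proposition~\ref{geo_k1} (which handles the model-orbit case $k=1$ and corresponds to $a=1$, $b=x_0^{-1}$ in \eqref{shift-a}) to arbitrary $a,b>0$. The starting point is the factorisation
\[
\ell_j(b)=(j+b)^{-a}\big[1-(1+(j+b)^{-1})^{-a}\big]=\frac{a}{(j+b)^{a+1}}\,\tilde g\big((j+b)^{-1}\big),
\]
where $\tilde g(u):=\frac{1-(1+u)^{-a}}{au}=1-\frac{a+1}{2}u+O(u^2)$ is holomorphic and equal to $1$ at $u=0$. Raising to the power $s$ yields
\[
\ell_j(b)^s=a^s(j+b)^{-(a+1)s}\,\tilde g\big((j+b)^{-1}\big)^s,
\]
and the strategy is to Taylor-expand $\tilde g(u)^s=\sum_{n\ge 0}G_n(s)u^n$ (with $G_n(s)$ a polynomial in $s$ of degree $n$ and $G_0(s)=1$), substitute $u=(j+b)^{-1}$, sum over $j\ge 0$, and recognise the $j$-sum of $(j+b)^{-(a+1)s-n}$ as the Hurwitz zeta $\zeta((a+1)s+n,b)$. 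The coefficients in \eqref{zetaab_W} will then be $W(s,a,n)=a^sG_n(s)$.

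To make the interchange of summations rigorous and to extend meromorphy to $\{\re s>-M/(a+1)\}$, I will truncate at order $M$ with explicit remainder $R_M(u;s)=O(|s|^{M+1}\,u^{M+1})$; the polynomial factor in $s$ arises because the omitted coefficient $G_{M+1}(s)$ is itself a polynomial in $s$ of degree $M+1$. The finite-sum part gives the Hurwitz zeta terms (the interchange being trivial in the original half-plane and preserved under analytic continuation), while the tail
\[
a^sR(s)=a^s\sum_{j=0}^{\infty}(j+b)^{-(a+1)s}R_M\big((j+b)^{-1};s\big)
\]
is bounded by a constant multiple of $|s|^{M+1}\sum_{j\ge 0}(j+b)^{-(a+1)\re s-M-1}$. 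This series converges uniformly on compacta of $\{\re s>-M/(a+1)\}$ by the Weierstrass M-test, and on each shifted half-plane $\{\re s\ge-M/(a+1)+\gamma\}$ it yields the estimate $R(s)=O(|s|^{M+1})$ as $|s|\to\infty$.

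The explicit triple-sum form of $W(s,a,n)$ in \eqref{Wsan} will be produced by a threefold iteration of the generalised binomial theorem. Starting from
\[
\tilde g(u)^s=\sum_{k\ge 0}\binom{s}{k}\big(\tilde g(u)-1\big)^k,\qquad \big(\tilde g(u)-1\big)^k=\sum_{i=0}^{k}\binom{k}{i}(-1)^{k-i}\tilde g(u)^i,
\]
and then using $\tilde g(u)^i=(au)^{-i}[1-(1+u)^{-a}]^i$ expanded as $(au)^{-i}\sum_{j=0}^{i}\binom{i}{j}(-1)^j(1+u)^{-aj}$ together with the binomial series $(1+u)^{-aj}=\sum_{m}\binom{-aj}{m}u^m$, I will extract the coefficient of $u^n$ and reindex to obtain the stated closed form. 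As immediate consistency checks, this should yield $W(s,a,0)=a^s$ and $W(s,a,1)=-\tfrac{a^s s(a+1)}{2}$; the simple zero of the latter at $s=0$ cancels the simple pole of $\zeta((a+1)s+1,b)$ there, explaining the exclusion of $s=0$ from the pole list. The pole at $s=1/(a+1)$ (coming from $n=0$) survives with nonzero residue proportional to $a^{1/(a+1)}$, giving $\dim_B\mathcal L_{a,b}=1/(a+1)$, while the candidate poles at $s=-m/(a+1)$, $m\ge 1$, survive only when the corresponding $W$-values are nonzero, which accounts for the phrase ``a subset of'' in the statement.

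For part~(2), I will combine the Phragm\'en--Lindel\"of-type estimate \eqref{hurwitz_bound} for the Hurwitz zeta function with the polynomial growth $|W(s,a,n)|=O(|\im s|^n)$ on any vertical strip of finite width (the factor $a^s$ being bounded there). For a screen $\{\re s=\sigma\}$ with $\sigma\in(-M/(a+1),-(M-1)/(a+1)]$, a term-by-term check, analogous to the end of the proof of Proposition~\ref{geo_k1}, shows that each of the $M+1$ summands $W(s,a,n)\zeta((a+1)s+n,b)$ in \eqref{zetaab_W} grows at most like $|\im s|^{M+\frac12}$, while the remainder $a^sR(s)$ dominates with $O(|\im s|^{M+1})$, yielding the claimed languidity exponent $\kappa_f=M+1$. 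The main obstacle throughout the whole argument is the derivation of the explicit closed form \eqref{Wsan}: although the existence and polynomial-in-$s$ nature of the Taylor coefficients $G_n(s)$ follow painlessly from the joint analyticity of $(u,s)\mapsto\tilde g(u)^s$, obtaining the precise triple sum requires careful bookkeeping of three nested binomial expansions and a Vandermonde-type reindexing.
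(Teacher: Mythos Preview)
Your overall strategy is sound and runs parallel to the paper's: write $\ell_j(b)^s=a^s(j+b)^{-(a+1)s}\tilde g\big((j+b)^{-1}\big)^s$ with $\tilde g(0)=1$, Taylor-truncate the last factor at order $M$ in $u=(j+b)^{-1}$, sum over $j$ to obtain shifted Hurwitz zeta values, and bound the tail by the Weierstrass $M$-test. The languidity argument in part~(2) also matches the paper's treatment.

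Two remarks. First, a small one: your justification of the remainder bound $R_M(u;s)=O(|s|^{M+1}u^{M+1})$ via ``the omitted coefficient $G_{M+1}(s)$ is polynomial of degree $M+1$'' is only a heuristic, since the remainder is the full tail, not the next term. The paper sidesteps this by first expanding $(1+h_j)^s$ with binomial-plus-Lagrange remainder in $h_j:=\tilde g(u)-1$ (so the factor $|s|^{M+1}$ comes directly from $\binom{s}{M+1}$), and only afterwards expands each $h_j^k$ as a power series in $u$. Your direct route is fine, but you will need the Lagrange remainder for $u\mapsto\tilde g(u)^s$ (whose $(M{+}1)$-th $u$-derivative is a polynomial in $s$ of degree $M{+}1$ with bounded coefficients for $u\in[0,1]$).

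Second, and more substantively: your triple-binomial path yields
\[
a^{-s}W(s,a,n)=\sum_{k=0}^{n}\binom{s}{k}\sum_{i=0}^{k}(-1)^{k-i}\binom{k}{i}a^{-i}\sum_{j=0}^{i}(-1)^j\binom{i}{j}\binom{-aj}{\,n+i\,},
\]
which is a correct closed form but is \emph{not} the stated form \eqref{Wsan} after a mere reindexing (your last binomial is $\binom{-aj}{n+i}$; the paper's is $\binom{(i-k)a}{n+k-j}$). The paper reaches \eqref{Wsan} by a different combinatorial route: it expands $h_j=-\tfrac{1}{a}\sum_{m\ge1}\binom{-a}{m+1}u^m$, forms the Cauchy $k$-th power to get the multinomial sum $\sum_{|\mathbf r_k|=n-k}\prod_\ell\binom{-a}{r_\ell+2}$, and then evaluates that sum via inclusion--exclusion combined with the Chu--Vandermonde identity (Lemma~\ref{lem:chu}). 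If you want \eqref{Wsan} exactly, you will need either that lemma or an additional identity linking your expression to it; ``reindexing'' alone will not suffice.
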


Before the proof, we state a conjecture which we have obtained by computer algebra software, but which we are so far unable to prove: that the only poles of $\zeta_{\mathcal{L}_{a,b}}$ are $$\omega_{a,2m}:=\frac{1-2m}{a+1},\ m\in\eN_0.$$
To get the poles and the corresponding residues of geometric zeta function $\zeta_{\mathcal L_{a,b}}$, we evaluate the functions $W(s,a,n)$ at $\omega_{a,n}:=\frac{1-n}{a+1}$, $n\in\mathbb N_0$, since these are the poles of the corresponding Hurwitz zeta function $\zeta((a+1)s+n,b)$.
In the case $a=1$ it is straightforward to show that $W(s,1,n)={-s\choose n}$, as expected from Proposition \ref{geo_k1}; therefore, $W(\omega_{1,n},1,n)$ is zero when $n$ is odd and nonzero when $n$ is even.
Computer algebra software suggests that this could be a general fact for any $a>0$, but we are unable to prove it rigorously.
\medskip

In the proof of Theorem~\ref{shifted_a_prop}, we need the following technical lemmas:
\begin{lemma}[Generalized inclusion-exclusion principle]\label{lem:iep}
	Let $U$ be a finite universal set and let $B_i\subseteq U$, $i=1,\ldots,k$, $k\in\mathbb N$. Let $f\colon U\to \Ce$ be a complex function defined on $U$.
	Then
	\begin{equation}\label{ine}
		\sum_{x\in U\setminus\left(\bigcup_{i=1}^{k}B_i\right)}f(x)=\sum_{I\subseteq\{1,\ldots,k\}}(-1)^{|I|}\sum_{x\in B_I}f(x),
	\end{equation}
	where $B_I:=\bigcap_{i\in I}B_i$ and $B_{\emptyset}:=U$.
\end{lemma}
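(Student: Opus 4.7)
The plan is to reduce the identity to a pointwise statement about coefficients of $f(x)$ on each side, which amounts to the classical inclusion–exclusion principle applied to indicator functions. More precisely, I would expand the characteristic function of the complement of the union as a product, and then simply swap the order of summation.

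First, for any $x\in U$, observe the pointwise factorization
\begin{equation*}
\mathbbm{1}_{U\setminus(\bigcup_{i=1}^k B_i)}(x)=\prod_{i=1}^k\bigl(1-\mathbbm{1}_{B_i}(x)\bigr),
\end{equation*}
since $x$ lies outside every $B_i$ precisely when each factor equals $1$. Expanding the product in the usual multilinear fashion and using that indicator functions are idempotent, so that $\prod_{i\in I}\mathbbm{1}_{B_i}(x)=\mathbbm{1}_{B_I}(x)$ with $B_{\emptyset}:=U$ corresponding to the empty product equal to $1$, one obtains
\begin{equation*}
\mathbbm{1}_{U\setminus(\bigcup_{i=1}^k B_i)}(x)=\sum_{I\subseteq\{1,\dots,k\}}(-1)^{|I|}\mathbbm{1}_{B_I}(x).
\end{equation*}
Multiplying both sides by $f(x)$ and summing over the finite set $U$ gives the identity \eqref{ine}, after interchanging the two finite sums on the right-hand side.

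There is no real obstacle here, since everything is finite: the only subtle point worth flagging is the correct handling of the empty subset $I=\emptyset$, where the convention $B_{\emptyset}=U$ is exactly what is needed to match the empty product $\prod_{i\in\emptyset}(\cdots)=1$ in the factorization above. Alternatively, one may verify the identity by a direct coefficient comparison: the coefficient of $f(x)$ on the right-hand side of \eqref{ine} equals $\sum_{I\subseteq S(x)}(-1)^{|I|}$, where $S(x):=\{i:x\in B_i\}$, and this binomial sum equals $1$ if $S(x)=\emptyset$ and $0$ otherwise by the standard identity $\sum_{j=0}^n\binom{n}{j}(-1)^j=(1-1)^n$. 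Either formulation yields the claim in a couple of lines.
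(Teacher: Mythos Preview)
Your proof is correct and takes essentially the same approach as the paper: the paper also reduces to a pointwise coefficient check, observing that the coefficient of $f(x)$ on the right-hand side is $\sum_{I\subseteq J}(-1)^{|I|}$ with $J=\{i:x\in B_i\}$, which equals $1$ when $J=\emptyset$ and $0$ otherwise. Your primary argument via the product factorization $\prod_i(1-\mathbbm{1}_{B_i}(x))$ is a mild repackaging of the same idea, and your ``alternative'' verification is literally the paper's argument.
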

\begin{proof}
	Write the right-hand side of \eqref{ine} as
	\begin{equation*}
		\sum_{x\in U}f(x)\sum_{I\subseteq\{1,\ldots,k\}}(-1)^{|I|}\chi_{B_I}(x),
	\end{equation*}
	where $\chi_{B}$ denotes the characteristic function of the set $B$.
	Next, one immediately sees that the inner sum above equals to 1 if $x$ does not belong to any $B_i$, $i=1,\ldots,k$, since then the only nonzero term is $\chi_{B_{\emptyset}}(x)=1$, when $I=\emptyset$.
	On the other hand, if $J=\{i\in\{1,\ldots,k\}:x\in B_i\}\neq\emptyset$, then we let $j:=|J|$. In that case, $x\in B_I$ if and only if $I\subseteq J$, and the inner sum is then
	$
	\sum_{i=0}^{j}(-1)^i{j\choose i}=0.
	$
\end{proof}
\begin{lemma}\label{lem:q} Let $a,\,b>0$ and let
\begin{equation}\label{Qn}
	Q_k(s):=\sum_{j=0}^{\infty}h_j^k(j+b)^{-(a+1)s},\ \re s>\frac{1}{a+1},\ k\in\mathbb N_0, 
\end{equation}
where
\begin{equation}\label{hj}
h_j:=(j+b)\int_0^{1/(j+b)}\left((1+t)^{-a-1}-1\right)\di t,\ j\in\mathbb N_0.
\end{equation}
Then $h_j=O(\frac{1}{j+b})$, as $j\to\infty$, and $Q_k$, $k\in\mathbb N_0$, define holomorphic functions on the open half plane $\re s>\frac{1-k}{a+1}$. Moreover, $Q_k$, $k\in\mathbb N_0$, can be meromorphically extended to all $\mathbb C$, with simple poles at $($a subset of\,$)$ the set:
$$
\mathcal P_k:=\left\{\frac{1-m}{a+1}:\ m\in\eN_0,\ m\geq k\right\}.
$$
\end{lemma}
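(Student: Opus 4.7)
The plan is to obtain, for every $k\ge 0$, a convergent asymptotic expansion of $h_j^{k}$ in powers of $(j+b)^{-1}$ that starts at order $k$, and then substitute it termwise into the definition of $Q_k(s)$ in order to rewrite $Q_k$ as a finite linear combination of Hurwitz zeta functions plus a remainder holomorphic on a larger and larger half-plane.

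First I would expand the integrand in the definition of $h_j$ as a binomial series. For every $j$ such that $(j+b)^{-1}<1$ the series
\[
(1+t)^{-a-1}-1=\sum_{n=1}^{\infty}\binom{-a-1}{n}t^{n},\qquad |t|<1,
\]
converges uniformly on $[0,1/(j+b)]$ and may be integrated termwise to give
\[
h_j=\sum_{n=1}^{\infty}\alpha_{n}(j+b)^{-n},\qquad \alpha_{n}:=\frac{1}{n+1}\binom{-a-1}{n}.
\]
In particular $h_j=-\tfrac{a+1}{2}(j+b)^{-1}+O((j+b)^{-2})$, which establishes the first claim. Raising to the $k$-th power preserves analyticity in the variable $u:=(j+b)^{-1}$ at $u=0$ and produces an expansion
\[
h_j^{k}=\sum_{n=k}^{\infty}c_{n,k}(j+b)^{-n},
\]
starting at order $k$ and convergent for $j$ large. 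Since the summand of $Q_k(s)$ is bounded in modulus by $C(j+b)^{-k-(a+1)\re s}$, its defining series converges absolutely and uniformly on compacta of $\{\re s>(1-k)/(a+1)\}$, proving the asserted holomorphy.

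For the meromorphic continuation I would fix $M>k$ and use Taylor's theorem, applied to the analytic function $u\mapsto\bigl(u^{-1}\!\int_0^{u}((1+t)^{-a-1}-1)\,dt\bigr)^{k}$ at $u=0$, to write
\[
h_j^{k}=\sum_{n=k}^{M-1}c_{n,k}(j+b)^{-n}+R_{M,k,j},\qquad R_{M,k,j}=O\bigl((j+b)^{-M}\bigr),
\]
with a remainder uniform in $j$ large. Substituting into $Q_k(s)$ and swapping the finite inner sum with the summation over $j$ in the region of absolute convergence yields
\[
Q_k(s)=\sum_{n=k}^{M-1}c_{n,k}\,\zeta\bigl((a+1)s+n,\,b\bigr)+E_{M}(s),
\]
where $E_{M}(s):=\sum_{j=0}^{\infty}R_{M,k,j}(j+b)^{-(a+1)s}$ is holomorphic on $\{\re s>(1-M)/(a+1)\}$ by the remainder bound. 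Since each Hurwitz zeta function $\zeta((a+1)s+n,b)$ is meromorphic on $\mathbb{C}$ with a unique simple pole at $s=(1-n)/(a+1)$, the displayed identity is an identity of meromorphic functions on $\{\re s>(1-M)/(a+1)\}$, and inside this window all poles of $Q_k$ are simple and confined to $\{(1-n)/(a+1):k\le n\le M-1\}$. Letting $M\to\infty$ delivers the meromorphic extension to all of $\mathbb{C}$ and restricts the pole set to $\mathcal P_k$.

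The main, though still mild, obstacle I expect is the uniform-in-$j$ control of the Taylor remainder $R_{M,k,j}$ needed for the holomorphy of $E_{M}$; additionally, the finitely many indices $j$ for which $(j+b)^{-1}\ge 1$, and the binomial expansion therefore fails, must be treated separately, but their contribution to $Q_k(s)$ is only a finite sum of terms $(j+b)^{-(a+1)s}$, which is entire and hence does not affect the pole structure described above.
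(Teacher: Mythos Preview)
Your proposal is correct and follows essentially the same approach as the paper: expand $h_j$ as a convergent power series in $(j+b)^{-1}$ via the binomial expansion of the integrand, take the $k$-th Cauchy power to obtain a series for $h_j^k$ starting at order $k$, truncate at an arbitrary order $M$, and rewrite $Q_k$ as a finite combination of shifted Hurwitz zeta functions plus a remainder holomorphic on $\{\re s>(1-M)/(a+1)\}$. The paper establishes the bound $h_j=O((j+b)^{-1})$ by the mean value theorem rather than reading it off the leading term, and it does not explicitly flag the finitely many indices with $(j+b)^{-1}\ge 1$ that you correctly note must be handled separately; otherwise the arguments coincide.
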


\begin{proof}
First, write the integrand in \eqref{hj} as $g(t)-g(0)$, where $g(t)=(1+t)^{-a-1}$, and apply the mean value theorem to the integrand. We get: $$|h_j|\leq(j+b)\int_0^{1/(j+b)}|g(t)-g(0)|\di t\leq(j+b)|g'(t_j)|\cdot \int_0^{1/(j+b)}t\di t\leq\frac{1}{2}\frac{a+1}{j+b},$$ where $t_j\in(0,\frac{1}{j+b})$ and $|g'(t_j)|=|-a-1|(1+t_j)^{-a-2}\leq a+1$, $j\in\mathbb N$. Therefore, $h_j=O(\frac{1}{j+b})$, $j\to\infty$.

For $k=0$, $Q_0(s)$ is equal to the Hurwitz zeta function $\zeta\big((a+1)s,b\big)$, so it has a simple pole at $s=\frac{1}{a+1}$. Note that $Q_k(s)$, $k\in\mathbb N$, given by \eqref{Qn}, is absolutely convergent in the open half-plane ${\re s>\frac{1-k}{a+1}}$, which follows directly since $h_j=O(\frac{1}{j+b})$, $j\to\infty$. We obtain its meromorphic extension to $\mathbb C$ and analyse its poles step by step, by extending meromorphically to $\{\re s>-\frac{M_k+k}{a+1}\}$, for every $M_k\in\mathbb N_0$.

In \eqref{hj}, we use the power series expansion of $(1+t)^{-a-1}=\sum_{m=0}^{\infty}{-a-1\choose m}t^m$ in order to obtain a power series expansion for $h_j$:
\begin{equation*}
\begin{aligned}
	h_j&=(j+b)\int_0^{1/(j+b)}\sum_{m=1}^{\infty}{-a-1\choose m}t^m\di t=-\frac{1}{a}\sum_{m=1}^{\infty}{-a\choose m+1}(j+b)^{-m}.
\end{aligned}
\end{equation*}
Taking the $k$-th power of the above series (in the Cauchy sense), we obtain a power series for $h_j^k$:
\begin{equation}\label{eq:hn}
\begin{aligned}
	h_j^k&=\left(-\frac{1}{a}\right)^k(j+b)^{-k}\left[\sum_{m=0}^{\infty}{-a\choose m+2}(j+b)^{-m}\right]^k=\\
	&=\left(-\frac{1}{a}\right)^k(j+b)^{-k}\sum_{m=0}^{\infty}c_{m,k}(j+b)^{-m}.
\end{aligned}
\end{equation}
Here,
\begin{equation}\label{cmn}
	c_{m,k}:=\sum_{|\mathbf{r}_k|=m}\prod_{i=1}^{k}{-a\choose r_i+2},
\end{equation}
where the sum goes over all multiindices $\mathbf{r}_k=(r_1,\ldots,r_k)\in\eN_0^k$ of length $m$.
For any integer $M_k\in\mathbb N_0$, from \eqref{eq:hn} we have:
\begin{equation*}
	h_j^k=\left(-\frac{1}{a}\right)^k(j+b)^{-k}\sum_{m=0}^{M_k}c_{m,k}(j+b)^{-m}+O((j+b)^{-k-M_k-1}).
\end{equation*}
Therefore, by \eqref{Qn},
\begin{equation}\label{eq:fin}
	Q_k(s)=\left(-\frac{1}{a}\right)^k\sum_{m=0}^{M_k}c_{m,k}\cdot\zeta\big((a+1)s+k+m,b\big)+r_{k}(s),
\end{equation}
where $\zeta(\cdot,\cdot)$ above denotes the Hurwitz zeta function, holomorphic except when the first variable equals to $1$ having a simple pole at that point.
The function $r_k$ is holomorphic in the open half-plane $\{\re s>-\frac{M_k+k}{a+1}\}$ and, for every $\gamma>0$, $r_k(s)=O\left((j+b)^{-k-M_k-1}\right)$, as $|s|\to\infty$ in $\{\re s\geq-\frac{M_k+k}{a+1}+\gamma\}$. We justify this claim by a standard argument analogous to the one in \eqref{func_eq_bin} in the proof of Proposition~\ref{geo_k1}.
Since $M_k\in\mathbb N_0$ is arbitrarily large, we conclude that, by formula \eqref{eq:fin}, $Q_k$ can be meromorphically extended to all of $\Ce$, with possible simple poles at $(1-m)/(a+1)$ for $m\geq k$, $m\in\eN_0$,\ $k\in\mathbb N$.
\end{proof}

\begin{lemma}\label{lem:chu}
Let $n\in\mathbb N$. Then, for $k\in\{1,\ldots,n\}$, it holds that: 
\begin{equation*}
	\sum_{|\mathbf{r}_k|=n-k}\prod_{j=1}^{k}{-a\choose r_j+2}=\sum_{i=0}^k(-1)^i{k\choose i}\sum_{l=0}^i{i\choose l}(-a)^l{(i-k)a\choose k+n-l},\end{equation*}
where $\mathbf{r}_k=(r_1,\ldots,r_k)\in\eN_0^k$ denote the multiindices of length $n-k$, $k=1,\ldots,n$.
\end{lemma}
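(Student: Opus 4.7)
\medskip

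\noindent\textbf{Proof plan (Lemma~\ref{lem:chu}).}
The plan is to recognize both sides as coefficients of the \emph{same} generating function and then let the equality fall out of a rearrangement of one binomial/trinomial expansion. First I would introduce the formal power series
\[
f(x):=\sum_{r\geq 0}\binom{-a}{r+2}x^r=\frac{(1+x)^{-a}-1+ax}{x^2},
\]
where the second equality comes from the binomial series $(1+x)^{-a}=\sum_{r\geq 0}\binom{-a}{r}x^r$ after subtracting the first two terms and dividing by $x^2$. By the Cauchy product formula, the multi-index sum on the left-hand side of the lemma is the coefficient of $x^{n-k}$ in $f(x)^k$, i.e.,
\[
\sum_{|\mathbf r_k|=n-k}\prod_{j=1}^k\binom{-a}{r_j+2}=[x^{n-k}]f(x)^k=[x^{n+k}]\bigl((1+x)^{-a}-1+ax\bigr)^k,
\]
the last equality obtained by absorbing the $x^{-2k}$ factor in $f(x)^k$.

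Next I would factor $(1+x)^{-ka}$ out of the bracket, writing
\[
\bigl((1+x)^{-a}-1+ax\bigr)^k=(1+x)^{-ka}\bigl[1-(1-ax)(1+x)^{a}\bigr]^k,
\]
and expand the outer $k$-th power with the binomial theorem:
\[
\bigl[1-(1-ax)(1+x)^{a}\bigr]^k=\sum_{i=0}^k(-1)^i\binom{k}{i}(1-ax)^i(1+x)^{ai}.
\]
Multiplying by the prefactor $(1+x)^{-ka}$ yields the expansion
\[
(1+x)^{-ka}\bigl[1-(1-ax)(1+x)^{a}\bigr]^k=\sum_{i=0}^k(-1)^i\binom{k}{i}(1-ax)^i(1+x)^{(i-k)a}.
\]

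It remains to extract the coefficient of $x^{n+k}$ in each summand. Expanding $(1-ax)^i=\sum_{l=0}^i\binom{i}{l}(-a)^l x^l$ and using $(1+x)^{(i-k)a}=\sum_{m\geq 0}\binom{(i-k)a}{m}x^m$, the coefficient of $x^{n+k}$ in $(1-ax)^i(1+x)^{(i-k)a}$ equals
\[
\sum_{l=0}^i\binom{i}{l}(-a)^l\binom{(i-k)a}{n+k-l}.
\]
Summing over $i$ with the weights $(-1)^i\binom{k}{i}$ produces exactly the right-hand side of the lemma, so both sides agree.

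I do not expect any real obstacles here: the argument is a routine generating-function identity once the right generating function is identified. The only small care points are the algebraic simplification $f(x)=\bigl((1+x)^{-a}-1+ax\bigr)/x^2$ and the convenient regrouping $[(1+x)^{-a}-1+ax]=(1+x)^{-a}\bigl[1-(1-ax)(1+x)^{a}\bigr]$, which is what makes the binomial expansion on the outer power match the shape of the right-hand side. Everything else is bookkeeping of coefficients in a finite double sum, with no convergence issues since all manipulations are at the level of formal power series in $x$.
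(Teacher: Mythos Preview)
Your proof is correct and genuinely different from the paper's.

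The paper argues combinatorially: after substituting $q_j:=r_j+2$, the left-hand side becomes $\sum_{\mathbf q_k\in X}\prod_j\binom{-a}{q_j}$ over multiindices with $|\mathbf q_k|=n+k$ and each $q_j\ge 2$. The constraint $q_j\ge 2$ is then removed by a generalized inclusion--exclusion (their Lemma~\ref{lem:iep}), splitting into subsets $A_I$ where a prescribed collection of coordinates is forced to lie in $\{0,1\}$; counting how many of those are $1$ gives the inner sum over $l$. The remaining free coordinates yield a convolution handled by the Chu--Vandermonde identity $\sum_{q_{i+1}+\cdots+q_k=k+n-l}\prod\binom{-a}{q_j}=\binom{(i-k)a}{k+n-l}$.

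Your route bypasses both inclusion--exclusion and Chu--Vandermonde by packaging the left-hand side as the coefficient $[x^{n+k}]\bigl((1+x)^{-a}-1+ax\bigr)^k$ and then using the algebraic factorization $(1+x)^{-a}-1+ax=(1+x)^{-a}\bigl[1-(1-ax)(1+x)^a\bigr]$, so that a single binomial expansion of the $k$-th power immediately produces the right-hand side. This is shorter and more mechanical; the paper's proof, by contrast, exposes the combinatorial structure (where the indices $i$ and $l$ come from) and justifies the two auxiliary lemmas it states beforehand. Either argument proves the lemma cleanly.
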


\begin{proof} The combinatorial proof uses the Chu-Vandermonde identity and the inclusion-exclusion principle.
We let

$$
H(n,k):=\sum_{|\mathbf{r}_k|=n-k}\prod_{j=1}^{k}{-a\choose r_j+2},
\ k=1,\ldots,n.$$
Changing the multiindices of summation to $q_j:=r_j+2$, $j=1,\ldots,k,$ so that the sum now goes over all multiindices $\mathbf{q}_k\in X$, where
\begin{equation}\label{iks}
	X:=\big\{\mathbf{q}_k\in\eN_{0}^k:|\mathbf{q}_k|=n+k,\ q_i\geq 2,\ i\in\{1,\ldots,k\} \big\},
\end{equation}
we get
$$H(n,k)=\sum_{\mathbf{q}_k\in X}\prod_{j=1}^{k}{-a\choose q_j}.$$
Without the limitations $q_i\geq 2$, $i=1,\ldots,k,$ we could directly use the Chu-Vandermonde identity. However, due to limitations, we first split the above sum by the inclusion-exclusion principle.
Let
\begin{equation*}
	A_i:=\big\{\mathbf{q}_k\in\eN_{0}^k:|\mathbf{q}_k|=n+k,\ q_i\in\{0,1\}\big\},\ i=1,\ldots,k.
\end{equation*}
For any subset $I\subseteq\{1,\ldots,k\}$, we let
\begin{equation*}
	A_I:=\bigcap_{i\in I}A_i,
\end{equation*}
with the convention 
\begin{equation*}
	A_{\emptyset}:=\{\mathbf{q}_k\in\eN_{0}^k:|\mathbf{q}_k|=n+k\},\ \text{ and }A^c:=A_{\emptyset}\setminus A,\ A\subseteq A_\emptyset.
\end{equation*}
By \eqref{iks}, we have 
\begin{equation*}
	X=\left(\bigcup_{i=1}^{k}A_i\right)^c.
\end{equation*}
By the generalization of the inclusion-exclusion formula from Lemma~\ref{lem:iep}, putting $U:=A_\emptyset$ as the finite universal set and $B_i:=A_i$, $i=1,\ldots,k,$ we get:
\begin{equation}\label{hprod}
	H(n,k)=\sum_{I\subseteq\{1,\ldots,k\}}(-1)^{|I|}\sum_{\mathbf{q}_k\in A_I}\prod_{j=1}^{k}{-a\choose q_j}.
\end{equation}
Note that the inner sum above depends only on $|I|$.
Indeed, if $|I|=i$, this means that  exactly $i$ of the components of the multiindex $\mathbf{q}_k$ are fixed to be equal to $0$ or to $1$. We rearrange the product in \eqref{hprod} so that these are exactly the first $i$ components $q_1,\ldots,q_i$:
\begin{equation}\label{hprod1}
	H(n,k)=\sum_{i=0}^k(-1)^i{k\choose i}\sum_{\mathbf{q}_k\in A_{\{1,\ldots,i\}}}\prod_{j=1}^{k}{-a\choose q_j}.
\end{equation}
Consider $\mathbf{q}_k\in A_{\{1,\ldots,i\}}$. Let us now assume that $l$ out of $i$ {\em fixed} components of $\mathbf{q}_k$ are equal to $1$, and the other $i-l$ fixed components are equal to $0$.
This means that in the product in \eqref{hprod1} we have $l$ factors of the type ${-a\choose 1}=-a$ and $i-l$ factors of the type ${-a\choose 0}=1$, so we can factor out $(-a)^l$.
Furthermore,  the \emph{free}, i.e., \emph{nonfixed} components $q_{i+1},\ldots,q_k$ of $\mathbf{q}_k$ then must satisfy $q_{i+1}+\cdots+q_k=k+n-l$.
Since there are ${i\choose l}$ ways to choose $l$ components out of $i$ (to be equal to $1$) and $l$ can be any integer between $0$ and $i$ (inclusive), we rearrange further the inner sum in \eqref{hprod1} by introducing another sum over $l$ (the number of components equal to 1) in order to obtain: 
\begin{equation}\label{eq:ab}
	H(n,k)=\sum_{i=0}^k(-1)^i{k\choose i}\sum_{l=0}^i{i\choose l}(-a)^l\sum_{q_{i+1}+\cdots+q_k=k+n-l}\prod_{j=i+1}^{k}{-a\choose q_j}.
\end{equation}
Finally, we apply the (generalized) Chu-Vandremonde identity (see, e.g., \cite[pp.\ 59-60]{askey}) to the innermost sum of \eqref{eq:ab}. We get:
\begin{equation*}
	H(n,k)=\sum_{i=0}^k(-1)^i{k\choose i}\sum_{l=0}^i{i\choose l}(-a)^l{(i-k)a\choose k+n-l}.
\end{equation*}
\end{proof}

\medskip

\noindent \emph{Proof of Theorem~\ref{shifted_a_prop}.}

Let $\mathcal L_{a,b}:=\{\ell_j(b):j\in\mathbb N\}$ be a shifted $a$-string, as in \eqref{shift-a}. We first determine the first term of the asymptotic expansion of $\ell_j(b),\ j\in\mathbb N$:
\begin{equation*}
	\ell_j(b)=(j+b)^{-a}-(j+b+1)^{-a}=a\int_{j+b}^{j+b+1}x^{-a-1}\di x=a(j+b)^{-a-1}(1+h_j),
\end{equation*}
where we have let
\begin{equation*}
	h_j:=\int_{j+b}^{j+b+1}((j+b)^{a+1}x^{-a-1}-1)\di x.
\end{equation*}
After changing variable of integration to $t:=(j+b)^{-1}x-1$, we have:
\begin{equation*}
	h_j=(j+b)\int_0^{1/(j+b)}\left((1+t)^{-a-1}-1\right)\di t.
\end{equation*}
By Lemma~\ref{lem:q}, $h_j=O(\frac{1}{j+b})$, as $j\to\infty$. Therefore, for $M\in\mathbb N_0$, we have:
\begin{equation}\label{sum_lb}
	\begin{aligned}
	\ell_j(b)^s&=a^s(j+b)^{-(a+1)s}(1+h_j)^s=\\
	&=a^s(j+b)^{-(a+1)s}\left(\sum_{n=0}^M{s\choose n}h_j^n+{s\choose M+1}O\left((j+b)^{-M-1}\right)\right),\ j\to\infty.
	\end{aligned}
\end{equation}
Moreover, the implicit constant in $O\left((j+b)^{-M-1}\right)$ can be taken as $1$, uniformly for every $M\in\mathbb N_0$, which can be easily seen by bounding the remainder term of the $M$-th Taylor polynomial, similarly as was done in the proof of Proposition \ref{geo_k1}.

Now, summing \eqref{sum_lb} over all $j\geq 0$, we get:
\begin{align}\label{zeta_lab}
	\zeta_{\mathcal{L}_{a,b}}(s)=\sum_{j=0}^{\infty}\ell_j(b)^s=&a^s\sum_{n=0}^M{s\choose n}\sum_{j=0}^{\infty}h_j^n(j+b)^{-(a+1)s}+a^sR(s)=\nonumber\\
	=&a^s\sum_{n=0}^M{s\choose n}Q_n(s)+a^sR(s),\ \re s>\frac{1}{a+1},
\end{align}
where  we have let:
\begin{align}
  &Q_n(s):=\sum_{j=0}^{\infty}h_j^n(j+b)^{-(a+1)s},\ n\in\mathbb N_0,\nonumber\\
	&R(s):={s\choose M+1}\sum_{j=0}^{\infty}(j+b)^{-(a+1)s}O\left((j+b)^{-M-1}\right).\label{eq:r}
\end{align}
The first sum in \eqref{zeta_lab} is absolutely and uniformly convergent on compact subsets of the open half-plane $\{\re s>1/(a+1)\}$ and the interchange of order of summation is justified. The sum \eqref{eq:r} converges absolutely and uniformly on compact subsets of the open half plane $\{\re s>-M/(a+1)\}$ and thus, is holomorphic in that half plane. The growth estimate for $R$, as $|s|\to\infty$, follows immediately.  Thus, \eqref{zeta_lab} is a functional equation between holomorphic functions in the half plane $\{\re s>1/(a+1)\}$. 
Next, we show that the right-hand side can be meromorphically extended to the open half plane $\{\re s>-M/(a+1)\}$. Indeed, by Lemma~\ref{lem:q}, $Q_0$ has a simple pole at $\frac{1}{a+1}$ and $Q_n$, $n\geq 1$, are meromorphically extendable to all of $\Ce$, with simple poles belonging to a subset of $\{-\frac{l}{a+1}: \ l\in\mathbb N_0\}$. Letting $M\to\infty$, this proves the statement of Theorem~\ref{shifted_a_prop} about the meromorphic extension and the set of poles of $\zeta_{\mathcal L_{a,b}}$.
\smallskip

The remainder of the proof of $(1)$ concerns proving the \emph{formula for residues} \eqref{Wsan}.
Fix $M\in\mathbb N_0$ and let $M_k:=M-k$, $k=1,\ldots,M$. Then all the remainder functions $r_k(s)$ from the expressions \eqref{eq:fin} for $Q_k$, $k=1,\ldots,M,$ in the proof of Lemma~\ref{lem:q} are holomorphic in $\{\re s>-M/(a+1)\}$ and, for every $\gamma>0$, $r_k(s)=O\left((j+b)^{-(M+1)}\right)$, as $|s|\to+\infty$ in the half-plane $\{\re s\geq-M/(a+1)+\gamma\}$.
Substituting \eqref{eq:fin} in \eqref{zeta_lab}, we have:
	\begin{equation}\label{zeta_lab_2}
	\begin{aligned}
	&\zeta_{\mathcal{L}_{a,b}}(s)=a^s\zeta\big(-(a+1)s,b\big)+\\
	&\ \ +a^s\sum_{k=1}^M{s\choose k}\left(\left(-\frac{1}{a}\right)^k\sum_{m=0}^{M-k}c_{m,k}\zeta\big((a+1)s+k+m,b\big)+r_{k}(s)\right)+a^sR(s)=\\
	&\!\!\!=a^s\zeta\big(\!\!-\!(a+1)s,b\big)\!+\!a^s\sum_{k=1}^M{s\choose k}\!\!\left(-\frac{1}{a}\right)^k\sum_{m=0}^{M-k}c_{m,k}\zeta\big((a+1)s+k+m,b\big)\!+\!a^s\tilde{R}(s)\!\!=\\
	&\!\!\!=a^s\zeta\big(\!\!-\!(a+1)s,b\big)+a^s\sum_{k=1}^M{s\choose k}\left(-\frac{1}{a}\right)^k\sum_{n=k}^{M}c_{n-k,k}\zeta\big((a+1)s+n,b\big)+a^s\tilde{R}(s),\\
	&\qquad\qquad\qquad\qquad\qquad\qquad\qquad\qquad\qquad\qquad\qquad\qquad\qquad\qquad\re s>-\frac{M}{a+1}.
	\end{aligned}
\end{equation}
Here, we have incorporated the sum involving all the remainder functions $r_k$,\newline $k=1,\ldots,M$, along with the original $R$, in $\tilde R$. The constants $c_{m,k}$ are given in \eqref{cmn}. It follows directly that, for every $\gamma>0$, $$\tilde{R}(s)=O(R(s))=O\big(|s|^{M+1}\big),$$ as $|s|\to+\infty$ in the half-plane $\{\re s\geq-\frac{M}{a+1}+\gamma\}$.

\noindent Changing the order of summation in finite sums,
	\begin{equation}\label{zeta_lab_3}
	\begin{aligned}
	&a^s\sum_{k=1}^M{s\choose k}\Big(-\frac{1}{a}\Big)^k\sum_{n=k}^{M}c_{n-k,k}\zeta\big((a+1)s+n,b\big)=\\
	&=a^s\sum_{n=1}^{M}\zeta\big((a+1)s+n,b\big)\sum_{k=1}^nc_{n-k,k}{s\choose k}\left(-\frac{1}{a}\right)^k,\ \re s>-\frac{M}{a+1}.
	\end{aligned}
\end{equation}
Substituting \eqref{zeta_lab_3} into \eqref{zeta_lab_2}, we get \eqref{zetaab_W}, with: 
\begin{equation}\label{prevander}
	W(a,s,n):=a^s\sum_{k=1}^n\left(-\frac{1}{a}\right)^k{s\choose k}\sum_{|\mathbf{r}_k|=n-k}\prod_{j=1}^{k}{-a\choose r_j+2},\ n=1,\ldots,M.
\end{equation}
Recall that $\mathbf{r}_k=(r_1,\ldots,r_k)\in\eN_0^k$ are multiindices of length $n-k$. Obviously, $W(a,s,n)$ in \eqref{prevander} are polynomials, modulo the factor $a^s$, in $s$ and in $a$ of degree $n$ in both variables.
Indeed, the factor $(-a)^{-k}$ can be canceled out by the innermost product (it has $k$ factors, and from each we can factor out $-a$). Using combinatorial Lemma~\ref{lem:chu} to the inner sum in \eqref{prevander}, we finally obtain the expression of $W(a,s,n)$ as a finite triple sum, as stated in \eqref{Wsan}. This is particularly useful for practical computation of residues of $\zeta_{\mathcal{L}_{a,b}}$. Indeed, in \eqref{Wsan}, we may actually sum from $k=0$ or from $k=1$ in the case $n>0$, since the term under the sum for $k=0$ then equals to $0$. In the case $n=0$ and $k=0$ the term under the sum equals to $1$. This yields exactly $W(s,a,0)=a^s$. 

\smallskip
To conclude the proof of $(1)$, it is left to show that $0$ is not a pole of $\zeta_{\mathcal L_{a,b}}$, and that, moreover, some of the poles $\frac{-m}{a+1},\ m\in\mathbb N,$ may cancel by zeros. Note that $W(a,s,n)$ is a polynomial of degree $n$ in $s$, multiplied by the factor $a^s$. Consequently, it may happen that the pole $\omega_n=\frac{1-n}{a+1}$ of the shifted Hurwitz zeta function $\zeta((a+1)s+n,b)$ is cancelled by a zero of $W(a,s,n)$, $n\in \mathbb N$. In particular, this happens when $n=1$, i.e., for $\omega_1=0$, since we can factor out $s$ in $W(a,s,1)$. 
\smallskip

$(2)$ We derive the languidity estimates of $\zeta_{\mathcal{L}_{a,b}}$ from the decomposition into the sum of shifted Hurwitz zeta functions given in \eqref{zetaab_W}. For a fixed $M\in\mathbb N_0$, the decomposition  \eqref{zetaab_W} is valid in the open half-plane $\{\re s>-\frac{M}{a+1}\}$. Choose for the screen the vertical line $\{\re s=\sigma\}$, where $\sigma\in\big(-\frac{M}{a+1},-\frac{M-1}{a+1}\big]$.
Due to the estimate on $R(s)$ proven in $(1)$, the holomorphic remainder $a^sR(s)$ is super languid for this screen with exponent $\kappa_f=M+1$.
Furthermore, the functions $W(a,s,n)$ are super languid with exponent $\kappa_n=n$, since, by \eqref{Wsan}, all of the binomials involving $s$ that appear in $W(a,s,n)$ are in fact polynomials in $s$ of degree at most $n$. These finite sums of polynomials in $s$ of degree at most $n$ are of order $O(|s|^n)$, as $|s|\to\infty$, in any finite vertical strip, while the factor $a^s$ is unformly bounded in finite vertical strips.
Recall the bounds \eqref{hurwitz_bound} for the exponent of growth of the shifted Hurwitz zeta functions $s\mapsto \zeta\big((a+1)s+n,b\big)$ from \eqref{zetaab_W} along vertical lines $\{\mathrm{Re} \,s=\sigma\}$:
\begin{equation*}\label{hurwitz_shift_bound}
		\mu_a(\sigma;b;n)\leq\begin{cases}
	                      	\frac12-(a+1)\sigma-n,& \textnormal{ if }\sigma\leq -\frac{n}{a+1},\\
	                      	\frac{1}{2}(1-(a+1)\sigma-n),&\textnormal{ if } -\frac{n}{a+1}\leq\sigma\leq -\frac{n-1}{a+1},\\
	                      	0,&\textnormal{ if }\sigma\geq -\frac{n-1}{a+1}.
	                      \end{cases}
\end{equation*}
From the above discussion we deduce that each of the products $W(a,s,n)\zeta((a+1)s+n,b)$, $n=0,\ldots,M-1$, is languid with exponent $\frac12-(a+1)\sigma$, except for $n=M$ in which case the exponent is $\frac{1}{2}(1-(a+1)\sigma)+\frac{M}{2}$. Thus, $M$ is a uniform lower bound for the languidity exponent of all these products, and $M+1/2$ is a uniform upper bound\footnote{Recall that we have fixed $\sigma\in(-\frac{M}{a+1},-\frac{M-1}{a+1})$.}.
From this we can see that the holomorphic remainder has the dominating languidity exponent $M+1$.
We also note here that, in the original statement of \cite[Theorem 6.21]{lapidusfrank12}, the claim about the languidity exponent is not true.
The reason is that the authors forgot to take into account the growth rate of the binomials ${s\choose k}$, as well as the growth rate of the holomorphic remainder.
\qed

\subsection{Proofs from Section~\ref{sec:proofA}}\
\smallskip

\noindent Let $\mathcal{O}_f(x_0)$, $0<x_0<1$, be an orbit generated by $f\in\mathrm{Diff}(\mathbb R,0)$, where $f$ is either parabolic, $f(x)=x+o(x)$, or hyperbolic, $f(x)=\lambda x+o(x)$, $0<\lambda <1$.
\smallskip

\noindent From \eqref{eq:epsi} and \eqref{eq:ac} (for more details, see e.g. \cite{MRRZ3}):
\begin{align}\label{eq:formula}
V_f(\varepsilon)&=2\varepsilon \cdot n_{\varepsilon}+f^{\circ n_\varepsilon}(x_0),\nonumber \\
V_f^\mathrm{c}(\varepsilon)&=2\varepsilon \cdot \tau_{\varepsilon}+f^{\circ \tau_\varepsilon}(x_0),\ \varepsilon>0.
\end{align}
The difference from \cite{MRRZ3} is that $\varepsilon$-neighborhoods, for simplicity, are considered only inside the segment $[0,x_0]$, so we do not add $+2\varepsilon$ in \eqref{eq:formula}. Here, $n_\varepsilon\in\mathbb N_0$ is the critical index of separation between the tail and the nucleus: $n_\varepsilon$ is the number of points in the tail and $f^{\circ n_\varepsilon}(x_0)$ is the first point of the orbit belonging to the nucleus. That is, $n_\varepsilon\in\mathbb N$ is characterized by the inequality:
\begin{equation}\label{eq:ine}
g(f^{\circ (n_\varepsilon)}(x_0))<2\varepsilon, \ g(f^{\circ (n_\varepsilon-1)}(x_0))\geq 2\varepsilon.
\end{equation}
Here, $g=\mathrm{id}-f$. 

For the continuous time counterparts $\tau_\varepsilon$ and $f^{\circ\tau_\varepsilon}(x_0)$, we embed $f$ in a flow $\{f^{\circ t}:t\in\mathbb R\}\subseteq\mathrm{Diff}(\mathbb R,0)$, as its time-1 map ($f^{\circ 1}\equiv f$), see \cite{MRRZ3} or \cite{MRRZ2}. A canonical choice of the flow is the standard flow in which the complex extension of $f$ from $\mathrm{Diff}(\mathbb C,0)$ can be embedded as its time-one map. This can be done sectorially on an attracting petal of opening $\frac{2\pi}{k}$ around $\mathbb R_+$ in the parabolic case of multiplicity $k$, or globally on $(\mathbb C,0)$ in the hyperbolic case, see e.g.\cite{boetcher} for description of dynamics of complex  analytic germs from $\mathrm{Diff}(\mathbb C,0)$. Embedding in a flow is equivalent to finding a \emph{Fatou coordinate} $\Psi$ for $f$. The Fatou coordinate is a  trivializing (time) coordinate for the flow, strictly monotonic on $(\mathbb R_+,0)$, see \cite{MRRZ2}:
\begin{equation}\label{eq:fatou}
\Psi(f^{\circ t}(x))-\Psi(x)=t,\ t\in\mathbb R,\ x\in(\mathbb R_+,0).
\end{equation}  
The \emph{continuous critical $\varepsilon$-time} $\tau_\varepsilon$ is defined as $\tau_\varepsilon>0$ such that: $$g(f^{\circ \tau_\varepsilon}(x_0))=2\varepsilon,$$ as compared with inequalities \eqref{eq:ine} for $f^{n_\varepsilon}(x_0)$. Therefore, \begin{equation}\label{eq:ana}f^{\circ \tau_\varepsilon}(x_0)=g^{-1}(2\varepsilon),\ \tau_\varepsilon=\Psi(g^{-1}(2\varepsilon))-\Psi(x_0).\end{equation}

\noindent In the sequel, we put, omitting $x_0$ in the notation for simplicity, $$x_t:=f^{\circ t}(x_0), \ t\in\mathbb R.$$

Let \begin{equation}\label{eq:epsn}\varepsilon_n:=\frac{x_{n}-x_{n+1}}{2}=\frac{g(x_n)}{2},\ n\in\mathbb N_0,\end{equation} denote the (strictly decreasing) sequence of halves of the distances between the consecutive points of the orbit $\mathcal{O}_f(x_0)$. Evidently, $\varepsilon_n\to 0$, as $n\to \infty$.

\bigskip
In the proof of Lemma~\ref{lem:prva} from Section~\ref{sec:proofA}, we need the following proposition:
\begin{proposition}\label{prop:ntau} Under the notation above,
\begin{equation}\label{eq:conn}
n_\varepsilon-\tau_\varepsilon=G(\tau_\varepsilon),\ \varepsilon\in(0,\varepsilon_0].
\end{equation}
Here, $s\mapsto G(s)$ is a 1-periodic discontinuous function on $[0,+\infty)$, explicitely given on its period by \begin{equation}\label{ge}G(s)=\begin{cases}0,&s=0,\\1-s,& s\in(0,1).\end{cases}\end{equation}
In other words,
$$
n_\varepsilon-\tau_\varepsilon=\begin{cases}1-\{\tau_\varepsilon\},&\ \varepsilon\in(0,\varepsilon_0]\setminus\{\varepsilon_n:n\in\mathbb N_0\},\\
0,&\ \varepsilon\in \{\varepsilon_n:n\in\mathbb N_0\}. \end{cases}
$$
where $\{\tau_{\varepsilon}\}=\tau_\varepsilon-\lfloor\tau_\varepsilon\rfloor.$
\end{proposition}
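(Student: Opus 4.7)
\medskip

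My plan is to establish the claimed identity by leveraging the strict monotonicity of the map $t \mapsto g(x_t)$ along the flow, and then comparing the inequality defining $n_\varepsilon$ with the equality defining $\tau_\varepsilon$. First, I would observe that since $f$ is attracting with $x_0$ in the basin of attraction, the continuous orbit $t \mapsto x_t = f^{\circ t}(x_0)$ is strictly decreasing on $[0,+\infty)$ and converges to $0$. Combined with the fact that $g = \mathrm{id} - f$ has the form $g(x) = ax^{k+1} + o(x^{k+1})$ (parabolic) or $g(x) = (1-\lambda)x + o(x)$ (hyperbolic), so that $g$ is strictly increasing on a neighborhood of $0$, I conclude that $t \mapsto g(x_t)$ is strictly decreasing on $[0,+\infty)$, with limit $0$ as $t\to+\infty$. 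This monotonicity is the backbone of the proof.

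Second, I would translate the two characterizations into inequalities/equalities involving this strictly decreasing function. By definition (using the convention consistent with the claim at $\varepsilon = \varepsilon_n$), $n_\varepsilon$ is the smallest integer $n \in \mathbb{N}_0$ such that $g(x_n) \leq 2\varepsilon$, equivalently
\[
g(x_{n_\varepsilon}) \leq 2\varepsilon < g(x_{n_\varepsilon-1}),
\]
while $\tau_\varepsilon$ is the unique positive real with $g(x_{\tau_\varepsilon}) = 2\varepsilon$. Applying the strict monotonicity of $t \mapsto g(x_t)$ to these relations yields $n_\varepsilon - 1 < \tau_\varepsilon \leq n_\varepsilon$, i.e.\ $\tau_\varepsilon \in (n_\varepsilon - 1,\, n_\varepsilon]$.

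From this interval containment, the identity $n_\varepsilon - \tau_\varepsilon = G(\tau_\varepsilon)$ is essentially a direct reading of the function $G$ defined by \eqref{ge}. I would split into two cases: if $\varepsilon \in \{\varepsilon_n : n \in \mathbb N_0\}$, say $\varepsilon = \varepsilon_n$, then by \eqref{eq:epsn} we have $g(x_n) = 2\varepsilon_n$, so $\tau_{\varepsilon_n} = n$, and the defining inequalities force $n_{\varepsilon_n} = n$ as well; thus $n_\varepsilon - \tau_\varepsilon = 0 = G(n) = G(\tau_\varepsilon)$, using $1$-periodicity of $G$ and $G(0)=0$. Otherwise, $\tau_\varepsilon \in (n_\varepsilon - 1,\, n_\varepsilon)$ strictly, so $\lfloor \tau_\varepsilon \rfloor = n_\varepsilon - 1$, and $\{\tau_\varepsilon\} = \tau_\varepsilon - (n_\varepsilon - 1) \in (0,1)$; hence $n_\varepsilon - \tau_\varepsilon = 1 - \{\tau_\varepsilon\} = G(\{\tau_\varepsilon\}) = G(\tau_\varepsilon)$, again by the $1$-periodicity and the formula for $G$ on $(0,1)$.

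The only genuinely delicate point — and thus the main obstacle if any — is keeping the strict versus non-strict inequalities consistent across the two characterizations so that the boundary case $\varepsilon = \varepsilon_n$ lands on the value $G(0)=0$ (rather than $G(1^-)=0$) and matches the convention in \eqref{ge}; this is the reason the definition of $n_\varepsilon$ must be taken with the weak inequality $g(x_{n_\varepsilon}) \leq 2\varepsilon$ rather than the strict one, and it is worth flagging explicitly. Everything else is a straightforward unwinding of definitions.
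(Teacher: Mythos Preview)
Your proof is correct and follows essentially the same idea as the paper's: both arguments rest on the strict monotonicity of $t\mapsto g(x_t)$, which pins down $\tau_\varepsilon\in(n_\varepsilon-1,n_\varepsilon]$ and reduces the claim to reading off $G$. The paper packages this slightly differently, by introducing the ``straightening chart'' $s=\tau_\varepsilon$ and describing $\tilde n(s):=n_\varepsilon$ as a $1$-jump step function on $[0,\infty)$, then setting $G:=\tilde n-\mathrm{id}$; your direct inequality manipulation achieves the same end without the change of variables. Your explicit flagging of the boundary convention (that $n_\varepsilon$ should be defined via $g(x_{n_\varepsilon})\le 2\varepsilon$ so that $n_{\varepsilon_n}=n$) is a genuine improvement in precision over the paper, which states \eqref{eq:ine} with a strict inequality yet uses the non-strict version in its proof.
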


\begin{proof}
Let $\{\varepsilon_n:\ n\in\mathbb N_0\}$ be as in \eqref{eq:epsn}. Note that $\varepsilon\mapsto n_\varepsilon$ on $\varepsilon\in (0,\varepsilon_0]$ is, by its definition \eqref{eq:ine}, a step function, given explicitely by:
$$
n_\varepsilon=\begin{cases}0,&\ \varepsilon= \varepsilon_0,\\
k+1,&\ \varepsilon\in (\varepsilon_k,\varepsilon_{k+1}],\ k\in\mathbb N_0.
\end{cases}
$$
In the \emph{straightening} chart $s:=\tau_\varepsilon$, which depends analytically on $\varepsilon>0$ by \eqref{eq:ana}, we put
$$
\tilde n(\tau_\varepsilon):=n_\varepsilon,\ \varepsilon\in (0,\varepsilon_0].
$$
By \eqref{eq:ana}, $\tau_{\varepsilon_k}=\Psi(g^{-1}(2\varepsilon_k))-\Psi(x_0)=k,\ k\in\mathbb N_0$, since, by definition of the sequence $\{\varepsilon_k\}_k$, it follows that $g(x_k)=2\varepsilon_k$, $k\in\mathbb N_0$. Therefore, $s\mapsto \tilde n(s),\ s\in [0,+\infty)$, becomes a $1$-jump function, continuous on intervals of length exactly $1$:
$$
\tilde n(0)=0,\ \tilde n(s)=k+1,\ s\in(k,k+1].
$$
Now, let us define function $G:=\tilde n-\mathrm{id}$. It is now $1$-periodic function on $[0,+\infty)$ given by \eqref{ge}.
Thus,
$$
G(\tau_\varepsilon)=\tilde n(\tau_\varepsilon)-\tau_\varepsilon=n_\varepsilon-\tau_\varepsilon,\ \varepsilon\in(0,\varepsilon_0],
$$
and \eqref{eq:conn} follows. The other statement follows by the simple fact that, by \eqref{ge}, $G(s)=1-\{s\},\ s\in(0,+\infty),\ s\notin \mathbb N_0$, and $G(s)=0$, $s\in\mathbb N_0$.
\end{proof}
\medskip

\noindent \emph{Proof of Lemma~\ref{lem:prva}}. \
Let
$$
f(x)=x-ax^{k+1}+o(x^{k+1}),\ a>0,\ x\to 0^+.
$$
By Proposition~\ref{prop:ntau},
\begin{equation}\label{eq:put}
n_\varepsilon-\tau_\varepsilon=G(\tau_\varepsilon),
\end{equation}
where $s\mapsto G(s)$ is a bounded 1-periodic function on $[0,+\infty)$, given by the explicit formula \eqref{ge}. 

Putting \eqref{eq:put} in \eqref{eq:formula}, we get (we omit variable $\tau_\varepsilon$ of $G$ for simplicity in further computations):
\begin{align}\label{eq:formulas}
V_f(\varepsilon)&=2\varepsilon (\tau_{\varepsilon}+G)+\Psi^{-1}(\tau_\varepsilon+G+\Psi(x_0)),\nonumber \\
V_f^{\mathrm c}(\varepsilon)&=2\varepsilon \tau_{\varepsilon}+\Psi^{-1}(\tau_\varepsilon+\Psi(x_0)), \ \varepsilon>0.
\end{align}
Now we expand both expressions, as $\tau_\varepsilon\to\infty$, i.e., as $\varepsilon\to 0$. Recall from standard literature (e.g. \cite{ilyayak},\,\cite{loray}) that, for a parabolic germ of order $k+1$, the asymptotic expansion of its Fatou coordinate and then of its inverse (deduced term by term by formal binomial expansions) is given by:
\begin{align}\label{eq:psiinv}
&\Psi(x)\sim \big(\frac{1}{kx^k}+\rho\log x\big)\circ \widehat\varphi(x),\ x\to 0,\nonumber\\
&\Psi^{-1}(y)\sim \widehat\varphi^{-1}\circ \Big(k^{-\frac{1}{k}}y^{-\frac{1}{k}}\big(1+\rho k^{-2}y^{-1}\log y+\widehat R(y)\big)\Big)=\nonumber\\
&=a^{-\frac{1}{k}}k^{-\frac{1}{k}}y^{-\frac{1}{k}}+\sum_{r=2}^{k}b_r y^{-\frac{r}{k}}+\rho a^{-\frac{1}{k}}k^{-\frac{1}{k}-2}y^{-1-\frac{1}{k}}\log y+\widehat H(y),\ y\to\infty,
\end{align}
for some $b_r\in\mathbb R$, $r=2,\ldots,k$,  $\widehat\varphi(x)-a^{\frac{1}{k}}x\in x^2\mathbb R[[x]]$\footnote{Here, $\mathbb R[[x]]$ standardly denotes the set of formal positive integer power series with real coefficients, and $x^k\mathbb R[[x]]$ the subset of those with valuation at least $k$. Similarly, $\mathbb R[[x,y]]$ denotes a formal power series in two variables.}, $\widehat\varphi^{-1}(x)\in x\mathbb R[[x]]$, $\widehat R(y)\in \mathbb R[[y^{-1},y^{-1}\log y]]$, and $\widehat H(y)\in\mathbb R[[y^{-\frac{1}{k}},y^{-1}\log y]]$. Note that $\rho a^{\frac{1}{k}}k^{-\frac{1}{k}-2}y^{-1-\frac{1}{k}}\log y$ is the first term in the expansion of $\Psi^{-1}(y)$, as $y\to 0$, containing a logarithm. 

On the other hand, the asymptotic expansion of $g(x)$, as $x\to 0$, belongs to $x^{k+1}\mathbb R[[x]],$ so the expansion of $g^{-1}(x)$ belongs to $a^{-\frac{1}{k+1}}x^{\frac{1}{k+1}}+x^{\frac{2}{k+1}}\mathbb R[[x^{\frac{1}{k+1}}]].$ The coefficients $b_r,\ r=2,\ldots,k,$ and the coefficients in all $\mathbb R[[.]]$ in \eqref{eq:psiinv} depend only on the coefficients of (finite jets of) $f$, and not on the initial condition $x_0$. 

Due to asymptotic expansion of $\Psi^{-1}$ from \eqref{eq:psiinv}, to analyse the expansion of the second term in both formulas \eqref{eq:formulas} as $\tau_\varepsilon\to+\infty$, it suffices to consider expansions of terms:
\begin{align*}
&\Big(\tau_\varepsilon\big(1+(G+\Psi(x_0))\tau_\varepsilon^{-1}\big)\Big)^{-\frac{m}{k}}=\tau_\varepsilon^{-\frac{m}{k}}\Big(1+(G+\Psi(x_0))\tau_\varepsilon^{-1}\Big)^{-\frac{m}{k}},\ m\in\mathbb N,\\
&\log^n\Big(\tau_\varepsilon\big(1+(G+\Psi(x_0))\tau_\varepsilon^{-1}\big)\Big)=\Big(\log\tau_\varepsilon+\log\big(1+(G+\Psi(x_0))\tau_\varepsilon^{-1}\big)\Big)^n,\\
&\qquad\qquad\qquad\qquad=(\log\tau_\varepsilon)^n\Big(1+(\log\tau_\varepsilon)^{-1}\log\big(1+(G+\Psi(x_0))\tau_\varepsilon^{-1}\big)\Big)^n,\ n\in\mathbb N.
\end{align*}
Since $G+\Psi(x_0)$ is bounded, and $\tau_\varepsilon\to+\infty$, we proceed simply by the binomial expansion. The first term and the first logarithmic term can moreover be explicitely computed. We get that the expansion of $\Psi^{-1}(\tau_\varepsilon+G+\Psi(x_0))$ in $\tau_\varepsilon$ is as follows:
\begin{align}\label{eq:oh}
\Psi^{-1}&(\tau_\varepsilon+G+\Psi(x_0))\sim a^{-\frac{1}{k}}k^{-\frac{1}{k}}\tau_{\varepsilon}^{-\frac{1}{k}}+\sum_{m=2}^{k}H_{m}(G+\Psi(x_0))\tau_\varepsilon^{-\frac{m}{k}}+\\
&+\rho a^{-\frac{1}{k}}k^{-\frac{1}{k}-2}\tau_\varepsilon^{-1-\frac{1}{k}}\log\tau_\varepsilon+ H_{k+1}(G+\Psi(x_0))\tau_\varepsilon^{-1-\frac{1}{k}}+\nonumber\\
&+\sum_{m=k+2}^{\infty} \Big(H_m(G+\Psi(x_0))\tau_\varepsilon^{-\frac{m}{k}}+\sum_ {p=1}^{\lfloor \frac{m}{k}\rfloor} K_{m,p}(G+\Psi(x_0))\tau_\varepsilon^{-\frac{m}{k}}\log^p\tau_\varepsilon\Big),\ \tau_\varepsilon\to \infty.\nonumber
\end{align}
Here, $H_{m}(s)$ are \emph{polynomials} of degree at most $\lfloor \frac{m-1}{k}\rfloor$, and $K_{m,p}(s)$, $p=1,\ldots,\lfloor\frac{m}{k}\rfloor$, polynomials of degree at most $\lfloor\frac{m}{k}\rfloor-1$.
 
Now we expand $2\varepsilon(\tau_\varepsilon+G)$ from \eqref{eq:formulas} in power-logarithmic scale in $\tau_\varepsilon$. Indeed,
$2\varepsilon=g\big(\Psi^{-1}(\tau_\varepsilon+\Psi(x_0))\big)$, where $g=\mathrm{id}-f=ax^{k+1}+o(x^{k+1})\in\mathbb R[[x]]$. Therefore, from \eqref{eq:oh}, we get:
\begin{align}\label{eq:new}
2\varepsilon(\tau_\varepsilon&+G)=g\big(\Psi^{-1}(\tau_\varepsilon+\Psi(x_0))\big)\cdot(\tau_\varepsilon+G)=\\
&=a^{-\frac{1}{k}}k^{-1-\frac{1}{k}}\tau_\varepsilon^{-\frac{1}{k}}+\sum_{m=2}^{k}c_m(x_0)\tau_\varepsilon^{-\frac{m}{k}}+\rho (k+1)k^{-3-\frac{1}{k}}a^{-\frac{1}{k}}\tau_\varepsilon^{-1-\frac{1}{k}}\log\tau_\varepsilon+\nonumber\\
&+(G a^{-\frac{1}{k}}k^{-1-\frac{1}{k}}+c_{k+1}(x_0))\tau_\varepsilon^{-1-\frac{1}{k}}+\nonumber\\
&+\sum_{m=k+2}^{\infty}\,\sum_ {p=0}^{\lfloor \frac{m}{k}\rfloor+1} \big(q_{m,p}(x_0)+G\cdot r_{m,p}(x_0)\big)\tau_\varepsilon^{-\frac{m}{k}}\log^p\tau_\varepsilon,\ \tau_\varepsilon\to +\infty.\nonumber
\end{align}
 Here, $c_m(x_0)\in\mathbb R,\ m\in\{2,\ldots,k+1\},$ and $q_{m,p}(x_0),\,r_{m,p}(x_0)\in\mathbb R,\ m\geq k+2,\ 1\leq p\leq \lfloor \frac{m}{k} \rfloor+1,$ denote the coefficients depending on $x_0$.
Putting \eqref{eq:oh} and \eqref{eq:new} in both expansions in \eqref{eq:formulas}, we get:
\begin{align}
V_f(\varepsilon)\sim &a^{-\frac{1}{k}}k^{-\frac{1}{k}}\frac{k+1}{k}\tau_{\varepsilon}^{-\frac{1}{k}}+\sum_{m=2}^{k}\tilde H_{m}(G)\tau_\varepsilon^{-\frac{m}{k}}+\nonumber\\
&+\rho a^{-\frac{1}{k}}k^{-\frac{1}{k}-2}\frac{k+1}{k}\tau_\varepsilon^{-1-\frac{1}{k}}\log\tau_\varepsilon+ \tilde H_{k+1}(G)\tau_\varepsilon^{-1-\frac{1}{k}}+\nonumber\\
&+\sum_{m=k+2}^{\infty} \Big(\tilde H_m(G)\tau_\varepsilon^{-\frac{m}{k}}+\sum_ {p=1}^{\lfloor \frac{m}{k}\rfloor+1} \tilde K_{m,p}(G)\tau_\varepsilon^{-\frac{m}{k}}\log^p\tau_\varepsilon\Big),\label{eq:epsok} \\
V_f^{\mathrm c}(\varepsilon)\sim &a^{\frac{1}{k}}k^{-\frac{1}{k}}\frac{k+1}{k}\tau_{\varepsilon}^{-\frac{1}{k}}+\sum_{m=2}^{k}\tilde H_{m}(0)\tau_\varepsilon^{-\frac{m}{k}}+\nonumber\\
&+\rho a^{\frac{1}{k}}k^{-\frac{1}{k}-2}\frac{k+1}{k}\tau_\varepsilon^{-1-\frac{1}{k}}\log\tau_\varepsilon+ \tilde H_{k+1}(0)\tau_\varepsilon^{-1-\frac{1}{k}}+\nonumber\\
&+\sum_{m=k+2}^{\infty} \Big(\tilde H_m(0)\tau_\varepsilon^{-\frac{m}{k}}+\sum_ {p=1}^{\lfloor \frac{m}{k}\rfloor+1} \tilde K_{m,p}(0)\tau_\varepsilon^{-\frac{m}{k}}\log^p\tau_\varepsilon\Big), \ \tau_\varepsilon\to+\infty.\label{eq:cepsok}
\end{align}

Here, $\tilde H_m(s)$ resp. $\tilde K_{m,p}(s)$ denote polynomials of degree at most $\lfloor \frac{m-1}{k}\rfloor$ resp. at most $\lfloor \frac{m}{k}\rfloor-1$, with coefficients \emph{depending on $x_0$}.
Therefore, negative powers of $\tau_\varepsilon$ are multiplied in $V_f(\varepsilon)$ by oscillatory (bounded, $1$-periodic) functions in $\tau_\varepsilon$, realized as polynomials in $1$-periodic function $G(\tau_\varepsilon)$. On the other hand, by formulas \eqref{eq:formulas}, we get the expansion \eqref{eq:cepsok} of $V_f^{\mathrm c}(\varepsilon)$ by putting $0$ instead of $G(\tau_\varepsilon)$ in \eqref{eq:epsok}. 

All polynomials $\tilde H_m$ up to $m=k$ are just constants, that is, of degree $0$, and are equal to the corresponding terms in $V_f^{\mathrm c}(\varepsilon)$. Moreover, using \eqref{taueps} and the fact that the terms of the asymptotic expansion of $\varepsilon\mapsto V_f(\varepsilon)$ up to the residual term do not depend on the initial point $x_0$, see \cite{formal}, it can be seen that these constants 
do not depend on the initial condition $x_0$, but only on coefficients of $f$. We denote them simply by $c_m\in\mathbb R$, $m=2,\ldots,k$. By \cite[Theorem B]{MRRZ3}, $V_f(\varepsilon)-V_f^{\mathrm c}(\varepsilon)=O(\varepsilon^{\frac{2k+1}{k+1}})$, as $\varepsilon\to 0$. Thus, since $V_f^{\mathrm c}(\varepsilon)$ does not contain oscillatory terms, $V_f(\varepsilon)$ also does not have oscillatory coefficients up to the order $O(\varepsilon^{\frac{2k+1}{k+1}})$. Since $\tau_\varepsilon^{-\frac{2k+1}{k}}\sim \varepsilon^{\frac{2k+1}{k+1}}$, we conclude that $\tilde H_{m}(s)$, $m=k+1,\ldots,2k,$ and $\tilde K_{m,p}(s)$, $m=k+1,\ldots,2k+1,$ $p=1,\ldots,\lfloor\frac{m}{k}\rfloor+1,$ are also constant polynomials, that we denote by $c_{m}(x_0)$ resp. $d_{m,p}(x_0)$. Here, $x_0$ denotes possible dependence on the initial point. Thus, from \eqref{eq:epsok} and \eqref{eq:cepsok}, we get \eqref{eq:ok}.
\hfill $\Box$
\bigskip

\begin{proof}[Proof of Proposition~\ref{lem:integrali}]
The proof follows directly from  the fact that distributional asymptotics behaves well under differentiation. 
Namely, let $T\in\mathcal{S}'(0,\delta)$ be a Schwartz distribution such that $T=O(x^{\beta})$ for some $\beta$, as $x\to 0$, then its distributional derivative $T'$ is $O(x^{\beta-1})$, as $x\to 0$.
Indeed, take $0<a<1$ and any test function $\varphi\in\mathcal{S}(0,\delta)$ (extended by $0$ to $(0,+\infty)$), then
\begin{equation*}
	\langle T',\varphi_a\rangle=-\langle T,(\varphi_a)'\rangle=-\frac{1}{a}\langle T,(\varphi')_a\rangle=\frac{1}{a}O(a^{\beta})=O(a^{\beta-1}),\ a\to 0.
\end{equation*}
The rest of the proof follows by induction, i.e., we apply the distributional derivative to the function
\begin{equation*}           
F^{[k]}(t)-\left(\sum_{p_1=0}^{n_1} c_{1,p_1}t^{\alpha_1}\log^{p_1}t+\ldots+\sum_{p_m=0}^{n_m}c_{m,p_m} t^{\alpha_m}\log^{p_m}t\right)
\end{equation*}
$k$ times and obtain that it is of distributional order $O(t^{\alpha_{m+1}-k})$, as $t\to 0$.
\end{proof} 

\begin{lemma}[Integration and growth]\label{lem:growth} 
Let $F:[0,+\infty)\to \mathbb R$ be integrally bounded $1$-periodic function. Let $f,\, g:(0,+\infty)\to \mathbb R$ be analytic, such that:
\begin{enumerate} 
\item $f'(s)\sim s^{\alpha}$,  $s\to 0^+,$
\item $g(s)\sim s^{\beta}\log^p s$ and $g'(s)\sim (s^{\beta}\log^p s)'$, $s\to 0^+$, $p\in\mathbb N_0$,
\item $\alpha,\,\beta\in\mathbb R$ such that $\beta-\alpha>0$.
\end{enumerate}
Then:
$$
\int_0^t F(f(s))\cdot g(s)\,ds=h(t)\cdot H(f(t))-\int_0^t H(f(s))\cdot h'(s)\,ds ,\ t>0.
$$
where $H:[0,+\infty)\to \mathbb R$ is again integrally bounded $1$-periodic, and $h:(0,+\infty)\to \mathbb R$ analytic such that
$$h(t)\sim t^{\beta-\alpha}\log^p t,\ t\to 0^+.$$
Moreover, due to boundedness of $H$,
$$
\int_0^t F(f(s))\cdot g(s)\,ds=O(t^{\beta-\alpha-\delta}),\ t\to 0^+,
$$
for every $\delta>0$.
\end{lemma}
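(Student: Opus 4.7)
The plan is straightforward integration by parts, with the integrally bounded $1$-periodicity of $F$ providing the crucial uniform boundedness after antidifferentiation.

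First I would define $H(u) := \int_0^u F(v)\,dv$ for $u \geq 0$. By the definition of integrally bounded $1$-periodic (Definition~\ref{def:IBP}), $H$ is itself bounded and $1$-periodic on $[0,+\infty)$; this is exactly the property that will be needed to control the remaining integral. Next I would set
\[
h(s) := \frac{g(s)}{f'(s)}, \qquad s>0,
\]
which is analytic on $(0,+\infty)$. Using $g(s)\sim s^{\beta}\log^p s$ and $f'(s)\sim s^\alpha$, division of asymptotics gives $h(s)\sim s^{\beta-\alpha}\log^p s$ as $s\to 0^+$, so $h$ has the required form.

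Second, I would rewrite the integrand as $F(f(s))\,g(s) = \bigl(F(f(s))\,f'(s)\bigr)\,h(s)$ and apply integration by parts, noting that $\tfrac{d}{ds}H(f(s)) = H'(f(s))f'(s) = F(f(s))f'(s)$ since $H'=F$. This yields
\[
\int_0^t F(f(s))\,g(s)\,ds = \bigl[H(f(s))\,h(s)\bigr]_0^t - \int_0^t H(f(s))\,h'(s)\,ds.
\]
The boundary term at $s=0^+$ must be shown to vanish: since $H\circ f$ is bounded (as $H$ is bounded) and $h(s) = O(s^{\beta-\alpha}\log^p s) \to 0$ as $s\to 0^+$ by virtue of the hypothesis $\beta-\alpha>0$, this is immediate. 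Consequently the identity of the lemma follows.

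Third, for the growth estimate, I would bound the two terms on the right-hand side separately. The boundary term satisfies $|H(f(t))\,h(t)| \leq \|H\|_\infty \cdot |h(t)| = O(t^{\beta-\alpha}|\log t|^p)$, which is $O(t^{\beta-\alpha-\delta})$ for every $\delta>0$. For the remainder integral I would compute $h'$ explicitly via the quotient rule, $h' = (g'f' - gf'')/(f')^2$; using the given asymptotic for $g'$ together with the fact that $f'$ is analytic (so that $f''(s)\sim \alpha s^{\alpha-1}$ follows from differentiating the asymptotic $f'(s)\sim s^\alpha$ term-by-term within the analytic class), one obtains $h'(s) = O(s^{\beta-\alpha-1}|\log s|^p)$. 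Since $\beta-\alpha>0$, this is integrable at $0$, and boundedness of $H\circ f$ yields
\[
\left|\int_0^t H(f(s))\,h'(s)\,ds\right| \leq \|H\|_\infty\int_0^t O(s^{\beta-\alpha-1}|\log s|^p)\,ds = O(t^{\beta-\alpha}|\log t|^p).
\]
Combining the two estimates absorbs the logarithmic factor into $t^{-\delta}$, giving the claim.

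The main subtle point is justifying that differentiation of the asymptotic $f'(s)\sim s^\alpha$ is legitimate; this is where analyticity of $f$ enters, since for analytic functions asymptotic expansions at a point (or at $0$) can be differentiated term-by-term. Once that is granted, the proof reduces to the two bookkeeping steps above; the hypothesis $\beta-\alpha>0$ is used twice and in an essential way, namely to ensure the boundary term at $0$ vanishes and to ensure the integral of $|h'|$ converges at $0$.
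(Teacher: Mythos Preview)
Your approach is essentially the same as the paper's: rewrite the integrand as $(F\circ f)\,f'\cdot g/f'$ and integrate by parts, with $h=g/f'$. The boundary and growth arguments are fine and in fact more detailed than the paper's.

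There is, however, one genuine gap. You define $H(u)=\int_0^u F(v)\,dv$ and observe (correctly) that $H$ is bounded and $1$-periodic, since $F$ is integrally bounded $1$-periodic. But the lemma asserts that $H$ is itself \emph{integrally bounded $1$-periodic}, i.e.\ that the primitive of $H$ is again bounded and $1$-periodic. Your $H$ need not have mean zero over a period, so its primitive can grow linearly; hence your $H$ does not in general satisfy Definition~\ref{def:IBP}. This matters because the lemma is applied iteratively (see Corollary~\ref{cor:h}), with the output $H$ playing the role of $F$ at the next step; without the stronger property, the iteration breaks.

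The paper fixes this by an additional normalization: after obtaining the identity with $F_1:=\int_0^{\cdot}F$, it replaces $F_1$ by $H:=F_1-\int_0^1 F_1(s)\,ds$, which is integrally bounded $1$-periodic by Proposition~\ref{prop:intnorm}. The identity survives this replacement because the constant $c:=\int_0^1 F_1$ contributes $-c\,h(t)+c\int_0^t h'(s)\,ds=-c\,h(0^+)=0$, using $h(0^+)=0$ (which you already established from $\beta-\alpha>0$). Adding this normalization step to your argument closes the gap.
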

\begin{remark} Note that, due to boundedness of $F$, we can immediately bound by integration:
$$
\int_0^t F(f(s))\cdot g(s)\,ds=O(t^{\beta+1-\delta}),\ t\to 0,\ \delta>0.
$$
The importance in the statement of Lemma~\ref{lem:growth} is that, if $\alpha<-1$, we get even \emph{higher order} $O(t^{\beta+|\alpha|-\delta})$, $\beta+|\alpha|>\beta+1$.
\end{remark}

\begin{proof} Directly by partial integration. We write:
$$
\int_0^t F(f(s))\cdot g(s)\,ds=\int_0^t F(f(s))f'(s)\cdot \frac{g(s)}{f'(s)}\,ds,
$$
and take $dv=F(f(s))f'(s)\,ds$ and $u=\frac{g(s)}{f'(s)}$ in partial integration. Let $F_1:[0,\infty)\to \mathbb R $ denote the primitive of $F$, $dF_1=F$, which is bounded and $1$-periodic by Definition~\ref{def:IBP}. We have :
\begin{equation}\label{eq:pom}
\int_0^t F(f(s))\cdot g(s)\,ds=u(t)\cdot F_1(f(t))-\int_0^t F_1(f(s))\cdot u'(s)\,ds ,\ t>0,
\end{equation}
where $u(t)\sim t^{\beta-\alpha}\log^p t$, $t\to 0^+$, and $u(0)=0$. We put $h:=u$. 
Finally, by integral normalization of $F_1$, putting $H:=F_1-\int_0^1 F_1(s)\,ds$ (see Proposition~\ref{prop:intnorm}), we get $H$ integrally bounded $1$-periodic. Due to cancellations in \eqref{eq:pom}, \eqref{eq:pom} holds also with $H$ instead of $F_1$. 
\end{proof}

As a direct consequence of Lemma~\ref{lem:growth}, we get the following Corollary~\ref{cor:h} that is directly applicable to proving \eqref{eq:jedan} in the proof of Lemma~\ref{lema:druga}. To prove \eqref{eq:jedan}, we re-iterate Corollary~\ref{cor:h} as many times as needed.
\begin{corollary}\label{cor:h} Let $f(s):=\tau_s\sim s^{-\frac{k}{k+1}}$, $g(s):=\tau_s^{-\frac{m}{k}}\log^p\tau_s\sim s^{\frac{m}{k+1}}\log^p s$, $s\to 0^+$, $p\in\mathbb N_0$, as in the case of critical continuous time for parabolic orbits of order $k\geq 1$. Let $F:=\tilde P_m\circ G$ or $F:=\tilde R_{m,p}\circ G$, as in \eqref{eq:s}. Then:
 \begin{align*}
\int_0^\varepsilon \tilde R_{m,p}(G(\tau_s)) \tau_s^{-\frac{m}{k}}\log^p\tau_s\ ds= h(\varepsilon) \cdot \tilde Q_m(G(\tau_\varepsilon))-&\int_0^\varepsilon \tilde Q_m\big(G(\tau_s)\big) h'(s)\,ds=\\
&=O(\varepsilon^{\frac{m}{k+1}+1+\frac{k}{k+1}-\delta}),\ \varepsilon\to 0^+,\end{align*}
for every $\delta>0$, where $\tilde Q_m\circ G$ is integrally bounded $1$-periodic and $h:(0,\infty)\to\mathbb R$ analytic, $$h(s)\sim s^{\frac{m}{k+1}+1+\frac{k}{k+1}}\log^p s, \ s\to 0^+.$$ Moreover, each of the 'new' functions $h(s) \cdot \tilde Q_m(G(\tau_s))$ and $h'(s) \cdot \tilde Q_m(G(\tau_s))$ on the right-hand side again satisfy the assumptions of Lemma~\ref{lem:growth}, with $F:=\tilde Q_m\circ G$, $f(s):=\tau_s$ and $g(s):=h(s)$ i.e. $h'(s)$.
\end{corollary}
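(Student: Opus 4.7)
The plan is to recognize Corollary \ref{cor:h} as a direct specialization of Lemma \ref{lem:growth}, with the roles of $f$, $g$, $F$ chosen so that the abstract hypotheses translate to the concrete parabolic setting. Concretely, I will set $f(s):=\tau_s$, $g(s):=\tau_s^{-m/k}\log^p\tau_s$, and $F:=\tilde R_{m,p}\circ G$ (and analogously with $\tilde P_m\circ G$), then verify the three hypotheses of Lemma \ref{lem:growth} and read off the asserted conclusions.

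First, I would verify hypothesis (1): since $\tau_\varepsilon$ is analytic in $\varepsilon$ by \eqref{eq:ana} (the Fatou coordinate $\Psi$ is analytic on the attracting petal and $g^{-1}$ is analytic) and $\tau_s\sim s^{-k/(k+1)}$ as $s\to 0^+$ by \eqref{taueps}, differentiation of this analytic asymptotic gives $f'(s)\sim s^{\alpha}$ with $\alpha=-\tfrac{2k+1}{k+1}$. Next, hypothesis (2): substituting $\tau_s\sim s^{-k/(k+1)}$ into $g(s)=\tau_s^{-m/k}\log^p\tau_s$ yields $g(s)\sim s^{m/(k+1)}\log^p s$, so $\beta=\tfrac{m}{k+1}$; the derivative asymptotic $g'(s)\sim(s^{\beta}\log^p s)'$ again follows from analyticity of $\tau_s$ and termwise differentiability of its power-logarithmic expansion. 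Finally, hypothesis (3) is immediate: $\beta-\alpha=\tfrac{m+2k+1}{k+1}>0$ for all $m,k\geq 1$, which is precisely the exponent $\tfrac{m}{k+1}+1+\tfrac{k}{k+1}$ appearing in the statement.

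With the hypotheses verified, Lemma \ref{lem:growth} yields the partial-integration identity with $h(s)\sim s^{(m+2k+1)/(k+1)}\log^p s$ and $\tilde Q_m\circ G$ the integral normalization of a bounded primitive of $\tilde R_{m,p}\circ G$, which is integrally bounded $1$-periodic by Proposition \ref{prop:intnorm}; the $O$-bound on the whole integral then follows from $\beta-\alpha-\delta$ asymptotics together with the boundedness of $H$ (here $\tilde Q_m\circ G$) in the lemma's conclusion. For the iteration statement (the ``moreover''), I would check that the new integrand $\tilde Q_m(G(\tau_s))\cdot h'(s)$ again fits Lemma \ref{lem:growth} with $F:=\tilde Q_m\circ G$, $f(s):=\tau_s$, and $g(s):=h'(s)$: hypotheses (1) and (2) are unchanged (and the asymptotic for $h'$ follows from the analyticity of $h$ as an antiderivative built via \eqref{eq:pom}), while hypothesis (3) holds a fortiori since $h'(s)\sim s^{(m+k)/(k+1)}\log^p s$ gives a new $\beta'=\tfrac{m+k}{k+1}$ with $\beta'-\alpha=\tfrac{m+3k+1}{k+1}>0$. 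The only mild subtlety (what I would flag as the main bookkeeping point, rather than a true obstacle) is that each iteration actually increases $\beta-\alpha$ by strictly more than $1$, namely by $\tfrac{2k+1}{k+1}>1$; this strict gain per step is exactly what powers the induction in the proof of Lemma \ref{lema:druga}, and is therefore worth stating explicitly in the write-up even though it follows automatically from the exponent arithmetic above.
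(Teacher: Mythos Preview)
Your proposal is correct and matches the paper's approach: the paper presents Corollary~\ref{cor:h} as an immediate specialization of Lemma~\ref{lem:growth} with exactly the choices $f(s)=\tau_s$, $g(s)=\tau_s^{-m/k}\log^p\tau_s$, and $F=\tilde R_{m,p}\circ G$ (or $\tilde P_m\circ G$), and your verification of hypotheses (1)--(3) with $\alpha=-\tfrac{2k+1}{k+1}$, $\beta=\tfrac{m}{k+1}$ is precisely what is needed. One small arithmetical slip worth cleaning up: in your final paragraph you compute $\beta'-\alpha=\tfrac{m+3k+1}{k+1}$ for the residual integrand $h'(s)\cdot\tilde Q_m(G(\tau_s))$, but then assert the per-iteration gain is $\tfrac{2k+1}{k+1}$; the gain $\tfrac{2k+1}{k+1}>1$ is the order increase obtained when one further primitive $\int_0^\varepsilon$ is applied to a term of the form $\tilde Q(G(\tau_s))\cdot g(s)$ (this is the content of Remark~\ref{rem:sve}(a)), not the difference $(\beta'-\alpha)-(\beta-\alpha)=\tfrac{k}{k+1}$ that you implicitly computed---so just rephrase that final sentence to refer to the gain per antiderivative rather than per decomposition step.
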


\medskip

\subsection{Proofs from Section~\ref{sec:general}.}\
\smallskip

\noindent \emph{Proof of Lemma~\ref{lemma_tilde}}.
	To get \eqref{eq_tilde}, we consecutively integrate $m$ times (by parts) the functional equation \eqref{tube_zeta_def} and show that \eqref{eq_tilde} is valid for all $s\in\Ce$ such that $\re s>\overline{\dim}_B A$.
	The key part is the observation that, for $s$ such that $\re s>\overline{D}:=\overline{\dim}_B A$, the integral $\int_{0}^{\delta}t^{s-N-1-m}V_{A}^{[m]}(t)\di t$  is absolutely convergent and therefore defines a holomorphic function in the open half-plane $\{\re s >\overline{D}\}$.
	Indeed, let $r>0$ be small enough such that $\re s>\overline{D}+r$.
	Then the upper Minkowski content $\mathcal{M}^{*(\overline{D}+r)}(A)$ equals $0$ and hence, there exists a positive constant $C_{\delta}$ such that $V_A(t)\leq C_{\delta}t^{N-(\overline{D}+r)}$ for all $t\in(0,\delta]$.
	We then have\footnote{Observe that $N-(\overline{D}+r)+1>0$ since we always have $\overline{D}\leq N$.}
	$$
	V_{A}^{[1]}(t)\leq\int_0^t V_A(\tau)\di\tau\leq\frac{C_{\delta}}{N-(\overline{D}+r)+1}t^{N-(\overline{D}+r)+1},
	$$
	for all $t\in(0,\delta]$. By induction, we get that, for all $m\in\eN$, there exists a constant $C_{\delta,m}>0$, such that: $V_{A}^{[m]}(t)\leq C_{\delta,m}t^{N-(\overline{D}+r)+m}$, $t\in(0,\delta]$.
	
	This implies that the integral $\int_{0}^{\delta}t^{s-N-1-m}V_{A}^{[m]}(t)\di t$ is bounded in absolute value by:
	$$
	\int_{0}^{\delta}t^{\re s-N-1-m}\cdot V_{A}^{[m]}(t)\di t\leq C_{\delta,m}\int_0^{\delta}t^{\re s-(\overline{D}+r)-1}\di t.
	$$
	The last integral above is convergent, since $\re s>\overline{D}+r$. Since $r>0$ is arbitrary, by the principle of analytic continuation, we conclude that \eqref{eq_tilde} is valid on $\{\re s>\overline D\}$. 
\qed

\section*{Declarations}

\begin{itemize}
\item Funding: The research of Goran Radunovi\'c and Maja Resman was supported by the Croatian Science Foundation under the grant UIP-2017-05-1020. The research of all authors was partially supported by Croatian Science Foundation (HRZZ) grant PZS-2019-02-3055 from
Research Cooperability funded by the European Social Fund. The research of Pavao Marde\v si\' c was partially supported by  
EIPHI Graduate School (contract ANR-17-EURE-0002).

\item Conflict of interest/Competing interests: The authors have no competing interests to declare that are relevant to the content of this article.
\item Ethics approval: Not applicable
\item Consent to participate: Not applicable
\item Consent for publication: All authors give consent to publish the article.
\item Availability of data and materials: Not applicable
\item Code availability: Not applicable
\item Authors' contributions: All authors contributed equally to this work.
\item Data availability statement: Data sharing not applicable to this article as no datasets were generated or analysed during the current study.
\end{itemize}

\end{document}